\tikzset{
    -Latex,auto,node distance =1 cm and 1 cm,semithick,
    state/.style ={circle, draw, minimum width = 1 cm, inner sep=0pt},
    point/.style = {circle, draw, inner sep=0.04cm,fill,node contents={}},
    bidirected/.style={Latex-Latex,dashed},
    el/.style = {inner sep=2pt, align=left, sloped}
}
\useunder{\uline}{\ul}{}
\newtheorem{theorem}{Theorem}[section]
\newtheorem{lemma}[theorem]{Lemma}
\newtheorem{corollary}[theorem]{Corollary}
\newtheorem*{theorem*}{Theorem}
\newtheorem*{lemma*}{Lemma}
\newtheorem*{corollary*}{Corollary}
\newtheorem*{proposition*}{Proposition}
\newtheorem*{conjecture*}{Conjecture}
\theoremstyle{definition}
\newtheorem{definition}{Definition}
\newtheorem*{definition*}{Definition}
\theoremstyle{definition}
\theoremstyle{definition}
\newtheorem*{example*}{Example}
\theoremstyle{definition}
\theoremstyle{definition}
\newtheorem*{assumption*}{Assumption}
\theoremstyle{definition}
\theoremstyle{remark}
\newtheorem{remark}{Remark}[section]
\theoremstyle{remark}
\newtheorem*{remark*}{Remark}
\DeclareMathOperator*{\argmin}{arg\,min} 
\DeclareMathOperator{\supp}{supp}
\newcommand{\E}{\mathbb{E}}
\newcommand{\prob}{\mathbb{P}}
\newcommand{\AND}{\text{ and }}
\newcommand{\given}{\,|\,}
\newcommand{\indep}{\perp \!\!\! \perp }
\newcommand{\T}{^\top}
\newcommand{\bss}{\text{BSS}}
\newcommand{\bssa}{\text{BSSu}}
\newcommand{\dd}{d}
\newcommand{\sps}{s}
\newcommand{\ubsps}{\overline{s}}
\newcommand{\betam}{\beta_{\min}}
\newcommand{\betaspace}{\Theta}
\newcommand{\Sigmaspace}{\Omega}
\newcommand{\mineig}{\omega}
\newcommand{\supps}{\mathcal{S}}
\newcommand{\suppspace}{\supps_{\dd,\sps}}
\newcommand{\suppspaceub}{\supps_{\dd}^{\ubsps}}
\newcommand{\penalt}{\tau}
\newcommand{\trusupp}{S_{*}} 
\newcommand{\estsupp}{\widehat{S}}
\newcommand{\mclass}{\mathcal{M}} 
\newcommand{\ubmclass}{\overline{\mathcal{M}}} 
\newcommand{\signalone}{\Delta}
\newcommand{\nbeta}{\beta}
\newcommand{\neps}{\epsilon}
\begin{document}

\title{Optimality and computational barriers in\\variable selection under dependence}

\author[]{Ming Gao}
\author[]{Bryon Aragam}
\affil[]{\emph{University of Chicago}}

\date{}

\maketitle

{\let\thefootnote\relax\footnote{Contact: \texttt{\{minggao,bryon\}@chicagobooth.edu}}}

\begin{abstract}%
    We study the optimal sample complexity of variable selection in linear regression under general design covariance, and show that subset selection is optimal while under standard complexity assumptions, efficient algorithms for this problem do not exist.
    Specifically, we analyze the variable selection problem and provide the optimal sample complexity with exact dependence on the problem parameters for both known and unknown sparsity settings.
    Moreover, we establish a sample complexity lower bound for any efficient estimator, highlighting a gap between the statistical efficiency achievable by combinatorial algorithms (such as subset selection) compared to efficient algorithms (such as those based on convex programming).
    The proofs rely on a finite-sample analysis of an information criterion estimator, which may be of independent interest.
    Our results emphasize the optimal position of subset selection, the critical role played by restricted eigenvalues, and characterize the statistical-computational trade-off in high-dimensional variable selection.
\end{abstract}

\section{Introduction}\label{sec:intro}
Variable selection is a classical problem in statistical learning theory. It aims to select the most effective subset of variables for predicting a target variable. 
The application of variable selection is ubiquitous, including feature selection in machine learning \citep{guyon2003introduction}, structure learning in graphical models \citep{maathuis2018handbook}, covariate adjustment in causal inference \citep{guo2022confounder}, and scientific research in biology \citep{heinze2018variable}.
Variable selection has drawn more attention in the high-dimensional era, and many computationally efficient algorithms have been proposed and studied \citep{tibshirani1996regression,fan2008sure}. 

We consider the variable selection task for linear regression with Gaussian noise:
\begin{align}\label{eq:lm}
    Y = X \T \nbeta + \neps\,, 
    \qquad X\sim\mathcal{N}(0,\Sigma) \,,
    \qquad \neps\sim\mathcal{N}(0,\sigma^2) \,.
\end{align}
In this setup, variable selection is also known as support recovery of $\nbeta$ vector.
We take the view of minimax optimality.
In this thread, existing work primarily considers standard design \citep{aeron2010information,reeves2019all}, i.e. the covariance $\Sigma = I_\dd$ where $\dd$ is the dimension, and conclude the optimal sample complexity.
While standard design is reasonable in certain experimental settings, it is more practical to study general design covariance beyond $I_\dd$ in modern applications. In particular, the potential presence of strong dependence among variables (such as in graphical models) imposes difficulties for existing computationally efficient methods, making it of special theoretical interest.
Although finite sample analysis in general design has been explored in prior work \citep{wainwright2009information,shen2012likelihood,shen2013constrained}, a gap remains between the obtained upper and lower bounds with respect to the problem parameters of $\nbeta$ and $\Sigma$. 
On the other hand, the computational cost of variable selection is known to be prohibitively high \citep{natarajan1995sparse}. 
Therefore, simultaneous statistical and computational optimality are unresolved.  This poses the natural question of whether or not there exists a simultaneously computationally efficient and sample optimal algorithm under general dependence.

To approach this, it is necessary to first establish matching upper and lower bounds on the sample complexity while setting aside computational considerations. Only then 
can we attempt characterize the (potential) statistical-computational trade-off in variable selection.
We focus on problems with general design covariance, which exhibits strong dependence between variables. Hence, naive approaches based on thresholding or marginal independence testing between response and covariates typically fails, and $\ell_1$-penalty based methods or other computationally efficient alternatives potentially require additional assumptions to perform, further highlighting the computational challenges of this problem.
Moreover, existing studies on sample complexity typically assume the exact knowledge of sparsity level $\sps$, which is the number of nonzero entries of $\nbeta$. While convenient to simplify the analysis, it sidesteps important technical challenges that arise when the sparsity is unknown and requires special treatment. 
Naturally, it is more practical and realistic to assume only an upper bound $\ubsps\ge \sps$ is provided. Under this setup, while consistency results are available for information-criterion type estimators, the finite sample analysis is still an open line of research. Therefore, the question of potential extension of optimality from known sparsity case to this more general setting has yet to be addressed.

\subsection{Contributions}\label{sec:intro:contri}
Our main contributions can be summarized as follows:
\begin{itemize}
    \item For variable selection in Gaussian linear model under general design, we derive the optimal sample complexity with precise dependence on the problem parameters:
    \begin{align}\label{eq:samcom}
        \Theta\bigg(\frac{\log \dd }{\mineig \betam^2/\sigma^2} \vee \log \binom{\dd-\sps}{\sps} \bigg)
    \end{align}
    where $\dd,\sps,\betam,\mineig,\sigma^2$ are dimension, sparsity level, nonzero entry lower bound of $\nbeta$, minimum eigenvalue lower bound of $\Sigma$, and noise variance. We show that best subset selection (\bss{}) achieves the optimality.
    \item We extend this optimality result to the unknown sparsity case where only an upper bound on the sparsity $\sps\le\ubsps$ is known. We show the optimality of \bss{} with an additive penalty in this more general setting, and the optimal sample complexity is given by replacing $\sps$ with $\ubsps$ in~\eqref{eq:samcom}.
    \item We provide a sample complexity lower bound for any polynomial-time support estimator with a gap between the optimality by a factor of restricted eigenvalue. This demonstrates a sample-computation trade-off in variable selection problem.
\end{itemize}

\subsection{Related work}\label{sec:intro:related}
As a fundamental and long-lasting problem, the existing work on variable selection is rich in nature. We intend to review the most relevant work to our focus of minimax optimality, particularly under the lens of general design matrices, without diving into every aspect of variable selection.
The concept of variable selection traces back to the foundational work of ANOVA \citep{fisher1936design,fisher1936use,fisher1970statistical}. Numerous textbook methods have been developed since then, including the stepwise regression \citep{efroymson1960multiple,miller1984selection,draper1998applied}, best subset selection \citep{hocking1967selection}, various types of information criteria \citep{mallows2000some,akaike1974new,schwarz1978estimating,konishi1996generalised}, and cross-validation \citep{stone1974cross}. The large sample properties of these methods, e.g. asymptotic efficiency and (in)consistency, have been obtained in different asymptotic regimes \citep{nishii1988maximum,shao1993linear,shao1997asymptotic}.
The modern era of high-dimensional data has seen substantial interest in $\ell_1$-based methods. Important representatives include Lasso \citep{tibshirani1996regression} and Orthogonal Matching Pursuit \citep{tropp2007signal}. These methods achieve variable selection consistency under conditions like irrepresentability or mutual incoherence \citep{zhao2006model,wainwright2009sharp,zhang2011sparse,cai2011orthogonal}. 
Extensions to them include thresholding the Lasso-type estimates \citep{meinshausen2009lasso}, replacing the $\ell_1$ penalty with other nonconvex choices \citep{fan2001variable,loh2017support,zhang2010nearly}, and multi-stage methods combining estimation power and thresholding thereby \citep{ndaoud2020optimal,wang2020bridge,wasserman2009high,ji2012ups}.
While these approaches are efficient, they typically require specific assumptions on the covariance, e.g. bounded norms or (restricted) eigenvalues of the design matrices. While in this work, we aim for the direction of arbitrary, strong dependence among the covariates, where these assumptions may fail in general.

We focus on analyzing the sample complexity of exact recovery under general design.
Optimality for the standard design, which considers $\Sigma=I_\dd$ and is related to compressed sensing \citep{candes2007dantzig,akccakaya2009shannon}, has been derived as $\Theta(\log\dd / \betam^2 \vee \log \binom{\dd-\sps}{\sps})$ \citep{rad2011nearly,fletcher2009necessary,aeron2010information}, where $\betam$ is the assumed lower bound of nonzero entries of $\nbeta$ (cf. Section~\ref{sec:prelim}). The optimality can be achieved by efficient methods \citep{wainwright2009sharp,ndaoud2020optimal}.
By contrast, for general design, the efficient techniques no longer apply due to the dependence between variables, and the sample complexity analysis is more complicated. Existing bounds are present in \citet{wainwright2009information,wang2010informationl,shen2012likelihood,shen2013constrained} along with analysis of best subset selection (\bss{}), but matching bounds with exact dependence on the problem parameters is not established yet. 
Moving beyond the known sparsity level, while consistency of information criterion-based methods is well-documented \citep{nishii1988maximum}, their finite-sample behavior, especially under general design, requires further study.

Exact subset selection is known to be computationally hard \citep{natarajan1995sparse,foster2015variable}. Over the years, much progress has been made toward solving the \bss{} programming more efficiently. Notable developments include mixed integer programming \citep{bertsimas2016best}, coordinate descent \citep{hazimeh2020fast}, and binary convex reformulation \citep{bertsimas2020sparse}. In particular, \citet{zhu2020polynomial} employs a sequencing-and-splicing technique to solve the programming in polynomial time, albeit imposing a sparse restricted condition (SRC).
On the other hand along with these computational advances, gaps in sample complexity performance between efficient methods and theoretical optimality have been established for many statistical problems \citep{kunisky2019notes,bandeira2022franz,moitra2023precise}, e.g. sparse PCA \citep{berthet2013optimal}, low-rank matrix problems \citep{oymak2015simultaneously,ma2015computational}, and Gaussian mixture models \citep{diakonikolas2017statistical}. 
In the context of linear model, \citet{zhang2014lower} has demonstrated a gap between the minimax prediction risk and the performance achievable by any polynomial-time algorithms by a factor of restricted eigenvalue \citep{raskutti2010restricted}. While in this work, we focus on the variable selection aspect of linear model.

\vspace{-1em}
\subsection{Notation}\label{sec:intro:notation}
For any nonnegative integer $\dd$, let $[\dd]:=\{1,\ldots,\dd\}$. For $\dd\ge 1$, throughout the paper, $S$ and $T$ are subsets of $[\dd]$ with $|S|$ being the cardinality.
Denote set of all possible subsets of $[\dd]$ with size (sparsity) $\sps $ to be $\suppspace:=\{S\subseteq[\dd]:|S|=\sps\}$. Further denote all subsets with size bounded by $\ubsps$ to be $\suppspaceub:= \cup_{\sps=0}^{\ubsps}\suppspace=\{S\subseteq[\dd]:|S|\le \ubsps\}$.
For a vector $x$, write the 2-norm to be $\|x\|=(\sum_{j}x_j^2)^{1/2}$, and the support to be $\supp(x) = \{j:x_j\ne 0\}$.
For a matrix $A$, write the operator 2-norm to be $\|A\|=\|A\|_{\textup{op}}=\sup_{\|x\|=1}\|Ax\|$, and the largest and smallest eigenvalues to be $\lambda_{\max}(A)$ and $\lambda_{\min}(A)$.
Let $x_S$ be the sub-vector of $x$ with coordinates indexed by $S$. Analogously, for matrix $A$, let $A_{S}$ be the sub-matrix with columns indexed by set $S$, and $A_{TS}$ to be the sub-matrix with rows and columns indexed by $T$ and $S$.
Let $\mathbb{R}_+,\mathbb{Z}_+,\mathbb{S}^d_{++}$ be positive numbers, positive integers, and positive definite matrices.
For a covariance matrix $\Sigma \in \mathbb{S}^d_{++}$, denote the conditional covariance matrix of the variables $S$ given the variables $T$ by $\Sigma_{S\given T}:= \Sigma_{S} - \Sigma_{ST}\Sigma_{TT}^{-1}\Sigma_{TS}$.
Let $\mathbf{1}_m,\mathbf{0}_m$ be all one's and all zero's vector of dimension $m$.
With some abuse of notation, we use $(X,Y)$ for both the random variables and the data matrix ($\mathbb{R}^{n\times \dd}\otimes \mathbb{R}^{n}$) interchangeably.
We denote by $\Pi_S:=X_S(X_S\T X_S)^{-1}X_S\T$ and $\Pi_S^\perp:=I_n-\Pi_S$ the projection matrices onto and out of the column subspace of $X_S$.
Finally, we say $a\lesssim b$ and $a\gtrsim b$ if $a\le Cb$ and $a\ge cb$ for some positive constants $C$ and $c$, and $a\asymp b$ if both $a\lesssim b$ and $a\gtrsim b$ hold.
$a\vee b$ and $a\wedge b$ are the maximum and minimum between two numbers $a$ and $b$, and $\lfloor a\rfloor$ is the largest integer small than or equal to $a$.

\section{Preliminaries}\label{sec:prelim}
We consider the usual linear model with Gaussian noise as in~\eqref{eq:lm}, copied below for reference:
\begin{align*}
    Y = X \T \nbeta + \neps\,, 
    \qquad X\sim\mathcal{N}(\mathbf{0}_\dd,\Sigma) \,,
    \qquad \neps\sim\mathcal{N}(0,\sigma^2)\,, \qquad X\indep \neps.
\end{align*}
The coefficient vector $\nbeta$ is sparse in the sense that $\|\nbeta\|_0 = \sps \le \ubsps$. 
We assume exact knowledge of $\sps$ in Section~\ref{sec:opt:bss}, then relax to unknown sparsity setting in Section~\ref{sec:opt:bss:unknown} where only an upper bound $\ubsps\ge\sps$ is provided.
In this work, we put the most basic assumptions on $\nbeta$ and $\Sigma$ and study the optimal dependence on the signal strength implied by these assumptions. Specifically, we consider the following parameter spaces:
\begin{align*}  
    \betaspace_{\dd,\sps}(\betam) & :=  \Big\{\nbeta\in\mathbb{R}^{\dd}: \|\nbeta\|_0 = \sps, \min_{j\in\supp(\nbeta)}|\nbeta_j|\ge \betam > 0\Big\},  \\
    \Sigmaspace_{\dd,\sps}(\mineig) & := \Big\{\Sigma\in\mathbb{S}^\dd_{++}: \min_{S,T\in\suppspace}\lambda_{\min}(\Sigma_{S\setminus T\given T})\ge \mineig > 0\Big\} \,. 
\end{align*}
The space $\betaspace_{\dd,\sps}(\betam)$ consists of all sparse vectors with exact $\sps$ many nonzero entries, and each of them is bounded away from zero by at least $\betam$, which measures the signal for recovery and is commonly assumed in literature \citep{van2011adaptive}. 
The covariance matrix space $\Sigmaspace_{\dd,\sps}(\mineig)$ imposes a lower bound on the minimum eigenvalue of the conditional covariance over subsets of size $\sps$, which essentially requires that the variance in $X_S$ to not be fully explained by $X_T$, otherwise $S$ and $T$ would be indistinguishable. 
For the special case of standard design ($\Sigma=I_\dd$), $\min_{S,T\in\suppspace}\lambda_{\min}(\Sigma_{S\setminus T\given T})=1$ for all $S,T\in\suppspace$.
We can extend these parameter spaces to the unknown sparsity setting as follows:
\begin{align*}  
    \betaspace_{\dd}^{\ubsps}(\betam) & :=  \Big\{\nbeta\in\mathbb{R}^{\dd}: \|\nbeta\|_0 \le \ubsps, \min_{j\in\supp(\nbeta)}|\nbeta_j|\ge \betam > 0\Big\},  \\
    \Sigmaspace_{\dd}^{\ubsps}(\mineig) & := \Big\{\Sigma\in\mathbb{S}^\dd_{++}: \min_{S,T\in\suppspaceub,S\not\subseteq T}\lambda_{\min}(\Sigma_{S\setminus T\given T})\ge \mineig > 0\Big\} \,.
\end{align*}
We enlarge the $\nbeta$ vector space by relaxing the exact sparsity to being upper bounded by $\ubsps$. For $\Sigma$ space, we relax the sizes of the candidate subsets while requiring one is not fully contained in the other. In addition, we denote
\begin{align}\label{eq:mclass}
\begin{aligned}
    \mclass & := \Big\{(\nbeta,\Sigma,\sigma^2):\nbeta\in\betaspace_{\dd,\sps}(\betam),\Sigma\in  \Sigmaspace_{\dd,\sps}(\mineig) \Big\} \\ 
    \ubmclass & := \Big\{(\nbeta,\Sigma,\sigma^2):\nbeta\in\betaspace_{\dd}^{\ubsps}(\betam),\Sigma\in  \Sigmaspace_{\dd}^{\ubsps}(\mineig) \Big\} \,,
\end{aligned}
\end{align}
to be the model classes of known and unknown sparsity settings. 
We often suppress the dependence on $(\dd,\sps,\ubsps,\betam,\mineig,\sigma^2)$ to avoid notation clutter.

We aim to estimate the support of $\nbeta$, denoted as $\trusupp:=\supp(\nbeta)$. A support estimator $\estsupp$ is a measurable function of i.i.d. observations $(X,Y)$ to the power set of $[\dd]$, i.e. $\estsupp(X,Y)\subseteq [\dd]$. We study the sufficient and necessary conditions on the sample size $n$ in terms of the problem parameters $\dd,\sps,\sigma^2,\betam,\mineig$ 
such that the error probability of exact recovery of $\trusupp$ is upper bounded by any small constant $\delta>0$ uniformly over $\mclass$ (or $\ubmclass$):
\begin{align}\label{eq:prelim:guarantee}
    \sup_{(\nbeta,\Sigma,\sigma^2)\in\mclass} \prob(\estsupp\ne \trusupp) \le \delta \,.
\end{align}
We derive upper and lower bounds on sample size $n$ such that above holds. When the derived bounds are matched up to problem-independent constants and logarithmic factor of sparsity $\log\sps$, we refer to them as the optimal sample complexity. Note that we do not suppress the factor of $\log\dd$. Any support estimator is called optimal when it satisfies~\eqref{eq:prelim:guarantee} with the optimal sample complexity.
In addition, as commonly imposed in literature, we assume $\sps\le\ubsps\le\dd/2$ to simplify results.

\begin{remark}
    Crucially, unlike the common assumptions in the literature for general design, which impose various types of bounded ``eigenvalues'', we do \emph{not} treat either problem parameter $\betam$, $\mineig$ or $\sigma^2$ as fixed constants; instead, we are interested in studying how selection performance in terms of sample complexity depends on these quantities.
\end{remark}

Finally, one crucial quantity in our result is the restricted eigenvalue (RE) of the design matrix, which appears extensively in previous work on $\ell_1$-penalized estimators and related methods.
It relaxes the typical dependence of the estimation error on the minimum eigenvalue of the empirical covariance (which vanishes when $n > \dd$). We recall the definition here:
\begin{definition}\label{defn:poly:RE}
    The RE constant of a design matrix $X\in\mathbb{R}^{n\times \dd}$ is
    \begin{align*}
        \gamma(X) := \min_{S\in\suppspace}\min_{\|\theta_{S^c}\|_1 \le 3\|\theta_{S}\|_1}\frac{\|X\theta\|^2 / n}{\|\theta\|^2} \,.
    \end{align*}
\end{definition}

\section{Optimality with known sparsity}\label{sec:opt:bss}
In this and the next section, we derive the optimal sample complexity for variable selection in linear models under general design with known and unknown sparsity. To achieve this, we derive a new lower bound to match existing upper bounds.
For completeness, we begin by reviewing one such upper bound before detailing our lower bound.
First recall the definition of \bss{}:
\begin{align*}
    \estsupp^{\bss{}} := \argmin_{S\in \suppspace} \|\Pi_S^\perp Y\|^2\,,
\end{align*}
which estimates the true support by minimizing the residual variance of $Y$ over all possible supports $\suppspace$. Recall that $\Pi_S^\perp$ is the projection matrix out of the column space of $X_S$.

For the setting with the knowledge of $\sps = \|\nbeta\|_0$, 
we start with defining the generic signal to distinguish $\trusupp$ from any other alternatives $T$ to be:
\begin{align}\label{eq:bss:signal}
    \signalone : = \min_{T\in\suppspace\setminus\{\trusupp\}}\frac{1}{|\trusupp\setminus T|} \frac{\nbeta_{\trusupp\setminus T}\T \Sigma_{\trusupp\setminus T\given T}\nbeta_{\trusupp\setminus T}}{\sigma^2}\,. 
\end{align}
We can find the intuition of this signal by looking at the numerator, which is the expected residual variance contributed by $\trusupp$ but not fully captured by the alternative support $T$, and depends on the difference set $(\trusupp\setminus T)$. The denominator is the noise variance of $\neps$.
Since \bss{} minimizes the residual variance for variable selection, \eqref{eq:bss:signal} measures how much signal-noise-ratio that \bss{} can exploit. 
Then we have the following generic lemma on statistical guarantee of \bss{}:
\begin{lemma}
\label{lem:opt:bss:ub:generic}
Assuming $\sps\le \dd/2$, for any $(\nbeta,\Sigma,\sigma^2) \in \mclass$, let $\trusupp=\supp(\nbeta)$, given $n$ i.i.d. samples from $P_{\nbeta,\Sigma,\sigma^2}$, if the sample size
\begin{align}\label{eq:opt:bss:ub:generic}
    n  &\gtrsim  \max_{\ell\in[\sps]}\frac{\log\binom{\dd-\sps}{\ell} + \log(1/\delta)}{\big(\ell\signalone\big) \wedge 1 } \,,
\end{align}
then $\prob_{\nbeta,\Sigma,\sigma^2}(\estsupp^{\bss{}}=\trusupp)\ge 1-\delta$.
\end{lemma}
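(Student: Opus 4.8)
The plan is to bound the failure probability by a union bound over all ``wrong'' supports $T \in \suppspace \setminus \{\trusupp\}$, grouping these supports by the overlap size $\ell := |\trusupp \setminus T|$. The event $\{\estsupp^{\bss{}} \neq \trusupp\}$ is contained in $\bigcup_{T \neq \trusupp} \{\|\Pi_T^\perp Y\|^2 \le \|\Pi_{\trusupp}^\perp Y\|^2\}$, so it suffices to control, for each $T$, the probability that the residual sum of squares under $T$ falls below that under the truth. For fixed $\ell$, the number of $T$ with $|\trusupp \setminus T| = \ell$ is $\binom{\sps}{\ell}\binom{\dd - \sps}{\ell}$, which I would crudely bound by $\sps^\ell \binom{\dd-\sps}{\ell}$ or simply absorb the $\binom{\sps}{\ell}$ into logarithmic-in-$\sps$ slack; the dominant term is $\log\binom{\dd-\sps}{\ell}$, matching the numerator in~\eqref{eq:opt:bss:ub:generic}.

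The core of the argument is a single-$T$ tail bound. Conditioning on the design matrix $X$ (so $Y = X\T\nbeta + \neps$ with $\neps \sim \mathcal{N}(0,\sigma^2 I_n)$), I would write $\|\Pi_T^\perp Y\|^2 - \|\Pi_{\trusupp}^\perp Y\|^2$ and decompose it. Since $\Pi_{\trusupp}^\perp Y = \Pi_{\trusupp}^\perp \neps$, the right-hand term is a $\chi^2$ with $n - \sps$ degrees of freedom scaled by $\sigma^2$. For the left-hand term, let $U := T \cup \trusupp$ and note $\Pi_T^\perp Y = \Pi_T^\perp(X_{\trusupp\setminus T}\nbeta_{\trusupp\setminus T} + \neps)$; the key observation is that the ``mean shift'' $\Pi_T^\perp X_{\trusupp\setminus T}\nbeta_{\trusupp\setminus T}$ is, conditionally on $X_T$, a Gaussian vector whose squared norm concentrates around $n \cdot \nbeta_{\trusupp\setminus T}\T \Sigma_{\trusupp\setminus T \mid T}\nbeta_{\trusupp\setminus T} = n \ell \sigma^2 \cdot (\text{per-coordinate signal}) \gtrsim n\ell\signalone\sigma^2$ by the definition~\eqref{eq:bss:signal}. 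So the comparison reduces to a noncentral-vs-central $\chi^2$ comparison with noncentrality parameter $\asymp n\ell\signalone$ and a degrees-of-freedom gap of $|T \setminus \trusupp| \le \ell$. Standard $\chi^2$ tail bounds (Laurent–Massart) then give that this event has probability $\le \exp(-c\, n\, (\ell\signalone \wedge 1))$ — the truncation at $1$ appearing because once the signal is large the bottleneck becomes the $\chi^2$ fluctuations of dimension $\asymp \ell$ rather than the noncentrality, and also from handling the cross term $\langle \Pi_T^\perp X_{\trusupp\setminus T}\nbeta_{\trusupp\setminus T}, \Pi_T^\perp\neps\rangle$.

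Combining: the total failure probability is at most $\sum_{\ell=1}^{\sps} \sps^\ell \binom{\dd-\sps}{\ell} \exp(-c\,n(\ell\signalone \wedge 1))$, and requiring each term to be $\le \delta/\sps$ (or summing a geometric-type series) yields exactly the stated sample-size condition $n \gtrsim \max_{\ell \in [\sps]} \big(\log\binom{\dd-\sps}{\ell} + \log(1/\delta)\big)/(\ell\signalone \wedge 1)$, up to the suppressed $\log\sps$ factor. I expect the main obstacle to be the careful conditional analysis of $\|\Pi_T^\perp X_{\trusupp\setminus T}\nbeta_{\trusupp\setminus T}\|^2$: one must condition on $X_T$ and on $X_{U^c}$ appropriately so that $X_{\trusupp\setminus T} \mid X_T$ has the conditional covariance $\Sigma_{\trusupp\setminus T \mid T}$, then show the projection $\Pi_T^\perp$ (built from the same $X_T$) does not destroy this concentration — this requires either a clever independence/rotation argument or a union bound over the randomness of $X_T$ as well, and getting the constants and the $\wedge 1$ truncation to come out cleanly is the delicate part. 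The cross term and the fact that $|T\setminus\trusupp|$ can be as large as $\ell$ (not just the overlap deficit) also need care to ensure the degrees-of-freedom discrepancy is absorbed into the same exponent.
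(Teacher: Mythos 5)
Your proposal follows essentially the same route as the paper's proof: a union bound over alternatives grouped by $\ell=|\trusupp\setminus T|$, a per-pair comparison of residual sums of squares via the Gaussian regression decomposition $X_{\trusupp\setminus T}=X_T\Sigma_{TT}^{-1}\Sigma_{T(\trusupp\setminus T)}+E_{\trusupp\setminus T}$ (exactly the ``independence/rotation argument'' you flag as the delicate step: $E_{\trusupp\setminus T}\nbeta_{\trusupp\setminus T}$ is independent of $X_T$, so applying $\Pi_T^\perp$ to it yields genuine central $\chi^2_{n-\sps}$ variables), followed by Laurent--Massart tails for the resulting differences of central $\chi^2$'s, with the $\wedge\,1$ truncation indeed arising from the cross term. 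The only cosmetic difference is the count of alternatives: the paper uses $\binom{\sps}{\ell}\le\binom{\dd-\sps}{\ell}$ (valid since $\sps\le\dd/2$), which keeps the numerator at a constant multiple of $\log\binom{\dd-\sps}{\ell}$, rather than your cruder $\sps^\ell$ bound which would introduce an extra $\ell\log\sps$.
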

The proof is postponed to Appendix~\ref{app:opt:bss:ub}. We emphasize the key idea behind the proof lies in the scaling factor $|\trusupp\setminus T|$ in~\eqref{eq:bss:signal}: the signal to distinguish $\trusupp$ and $T$ is actually proportional to $|\trusupp\setminus T|$. This means when $T$ deviates from $\trusupp$ a lot (by the number of missing true covariates), it is easier to tell them apart.
At the same time, the total number of alternatives $T$ to $\trusupp$ also grows with their difference $|\trusupp\setminus T|$. Therefore, these two effects cancel each other, leading to the desired sample complexity $\log (\dd-\sps) / \signalone \vee \log\binom{\dd-\sps}{\sps}$.
By applying Lemma~\ref{lem:opt:bss:ub:generic} to $\mclass$, we obtain an upper bound on the sample complexity for this model class, which was first described in \citet{wainwright2009information}:
\begin{theorem}[\citealp{wainwright2009information}, Theorem~1]
\label{thm:opt:bss:ub}
Assuming $\sps\le \dd/2$, for any $(\nbeta,\Sigma,\sigma^2) \in \mclass$, given $n$ i.i.d. samples from $P_{\nbeta,\Sigma,\sigma^2}$, if the sample size
\begin{align}\label{eq:opt:bss:ub}
    n  &\gtrsim  \max\bigg\{ \frac{\log\big(\dd-\sps\big) + \log(1/\delta)}{\betam^2\mineig/\sigma^2} , \log \binom{\dd-\sps }{\sps} + \log(1/\delta)\bigg\} \,,
\end{align}
then $\prob_{\nbeta,\Sigma,\sigma^2}(\estsupp^{\bss{}}=\trusupp)\ge 1-\delta$.
\end{theorem}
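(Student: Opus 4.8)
The plan is to derive Theorem~\ref{thm:opt:bss:ub} as an immediate corollary of the generic guarantee in Lemma~\ref{lem:opt:bss:ub:generic}. Concretely, I would (i) lower bound the generic signal $\signalone$ from~\eqref{eq:bss:signal} uniformly over the whole model class $\mclass$, and then (ii) simplify the $\max_{\ell\in[\sps]}$ in~\eqref{eq:opt:bss:ub:generic} into the two-term maximum of~\eqref{eq:opt:bss:ub}. No new probabilistic argument is needed beyond Lemma~\ref{lem:opt:bss:ub:generic}.

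For step (i), fix any $(\nbeta,\Sigma,\sigma^2)\in\mclass$ with $\trusupp=\supp(\nbeta)$, and any alternative $T\in\suppspace\setminus\{\trusupp\}$. Since $|T|=|\trusupp|=\sps$ and $T\ne\trusupp$, the set $\trusupp\setminus T$ is nonempty, and as $\Sigma\in\mathbb{S}^\dd_{++}$ the conditional covariance $\Sigma_{\trusupp\setminus T\given T}$ is well defined and positive definite. I would bound the quadratic form by its smallest eigenvalue,
\begin{align*}
    \nbeta_{\trusupp\setminus T}\T \Sigma_{\trusupp\setminus T\given T}\nbeta_{\trusupp\setminus T}
    \;\ge\; \lambda_{\min}\big(\Sigma_{\trusupp\setminus T\given T}\big)\,\|\nbeta_{\trusupp\setminus T}\|^2 ,
\end{align*}
then invoke the definition of $\Sigmaspace_{\dd,\sps}(\mineig)$ with $S=\trusupp$ to get $\lambda_{\min}(\Sigma_{\trusupp\setminus T\given T})\ge\mineig$, and use $\nbeta\in\betaspace_{\dd,\sps}(\betam)$ together with $\trusupp\setminus T\subseteq\supp(\nbeta)$ to get $\|\nbeta_{\trusupp\setminus T}\|^2\ge|\trusupp\setminus T|\,\betam^2$. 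After dividing by $|\trusupp\setminus T|\,\sigma^2$ the scaling factor $|\trusupp\setminus T|$ cancels, so the expression inside the minimum in~\eqref{eq:bss:signal} is at least $\mineig\betam^2/\sigma^2$ for every $T$, and hence $\signalone\ge\mineig\betam^2/\sigma^2$.

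For step (ii), writing $\nu:=\mineig\betam^2/\sigma^2$, the bound $\signalone\ge\nu$ gives $(\ell\signalone)\wedge 1\ge(\ell\nu)\wedge 1$, so it suffices to control $\max_{\ell\in[\sps]}\big(\log\binom{\dd-\sps}{\ell}+\log(1/\delta)\big)\big/\big((\ell\nu)\wedge 1\big)$. I would split on whether $\ell\nu\le 1$: when $\ell\nu\le 1$ the term equals $\big(\log\binom{\dd-\sps}{\ell}+\log(1/\delta)\big)/(\ell\nu)$, which is at most $\big(\log(\dd-\sps)+\log(1/\delta)\big)/\nu$ using $\binom{\dd-\sps}{\ell}\le(\dd-\sps)^\ell$ and $\ell\ge 1$; when $\ell\nu>1$ the term equals $\log\binom{\dd-\sps}{\ell}+\log(1/\delta)\le\log\binom{\dd-\sps}{\sps}+\log(1/\delta)$ by monotonicity of the binomial coefficient for $\ell\le\sps$. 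Taking the maximum over the two cases reproduces~\eqref{eq:opt:bss:ub}.

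I do not expect a deep obstacle here, since Lemma~\ref{lem:opt:bss:ub:generic} already absorbs the delicate finite-sample work; the one conceptual point to get right is the cancellation in step (i), namely that the normalization by $|\trusupp\setminus T|$ in the definition of $\signalone$ is exactly matched by the dimension of $\nbeta_{\trusupp\setminus T}$, so that the worst-case signal over $\mclass$ collapses to the single scalar $\mineig\betam^2/\sigma^2$ regardless of which alternative $T$ is hardest to rule out. The only mildly fiddly part is the binomial-coefficient bookkeeping in step (ii), which is immediate in the high-dimensional regime $\sps=o(\dd)$ of primary interest and requires only a little care about constants when $\sps$ is a constant fraction of $\dd$.
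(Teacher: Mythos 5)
Your proposal is correct and matches the paper's proof: step (i) is exactly the paper's chain of inequalities $\nbeta_{\trusupp\setminus T}\T\Sigma_{\trusupp\setminus T\given T}\nbeta_{\trusupp\setminus T}\ge|\trusupp\setminus T|\betam^2\mineig$, yielding $\signalone\ge\betam^2\mineig/\sigma^2$ and reducing the theorem to Lemma~\ref{lem:opt:bss:ub:generic}. Your step (ii) spells out the $\max_{\ell}$ simplification that the paper leaves implicit, and you correctly flag that the binomial monotonicity $\binom{\dd-\sps}{\ell}\le\binom{\dd-\sps}{\sps}$ needs care near $\sps=\dd/2$ (it genuinely requires $\ell\le(\dd-\sps)/2$), a point the paper also glosses over.
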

\begin{proof}
The proof is given by the following chain of inequalities: For any $(\nbeta,\Sigma,\sigma^2)\in \mclass$ and any $T\in\suppspace\setminus\{\trusupp\}$, we have
\begin{align*}
    \nbeta_{\trusupp\setminus T}\T\Sigma_{\trusupp\setminus T\given T}\nbeta_{\trusupp\setminus T} 
     \ge \|\nbeta_{\trusupp\setminus T}\|^2\lambda_{\min}(\Sigma_{\trusupp\setminus T\given T}) 
    \ge |\trusupp\setminus T|\betam^2\lambda_{\min}(\Sigma_{\trusupp\setminus T\given T}) 
     \ge |\trusupp\setminus T|\betam^2\mineig \,,
\end{align*}
which yields $\signalone\ge \betam^2\mineig / \sigma^2$ and completes the proof.
\end{proof}
Theorem~\ref{thm:opt:bss:ub} establishes the upper bound $\log (\dd-\sps) / (\betam^2\mineig/\sigma^2) + \log\binom{\dd-\sps}{\sps}$ for variable selection and provides characterization of the dependence on $(\nbeta,\Sigma)$ via $\betam$ and $\mineig$ separately. 
$\betaspace_{\dd,\sps}(\betam)$ requires each variable in $\trusupp$ has large enough effect on $Y$, and $\Sigmaspace_{\dd,\sps}(\mineig)$ demands for any two distinct supports $S$ and $T$, the variables therein cannot fully explain each other. Both parameters are indispensable for the uniform consistency of successful support recovery.

Now we switch gears to obtain lower bounds for the risk over $\mclass$ to match the ones in Theorem~\ref{thm:opt:bss:ub}.
The lower bound provided in Theorem~2 of \citet{wainwright2009information} is
\begin{align*}
    n \gtrsim \max\bigg\{\frac{\log \binom{\dd}{\sps}}{\mineig_{bu}\betam^2 / \sigma^2} , \frac{\log(\dd-\sps)}{\mineig_{ave}\betam^2 / \sigma^2}\bigg\} \,,
\end{align*}
where
\begin{align*}
    & \mineig_{bu}:= \E_{S}[\min_{z_{S}\in\mathbb{R}^\sps,|z_j|\ge 1, \forall j}z_S\T \Sigma_{SS}z_S] \\
    & \mineig_{ave}:= \E_S[\min_{t\in S}\min_{z_u:u\in\{t\}\cup S^c,|z_u|\ge 1/\sqrt{2}} \sum_{u,v\in \{t\}\cup S^c}(\Sigma_{uu}z_u^2 + \Sigma_{vv}z_v^2- 2\Sigma_{uv}z_uz_v)] \,,
\end{align*}
and the expectation is taken over $S\sim Unif(\suppspace)$.
However, this lower bound is derived using different set of parameters $\mineig_{bu}$ and $\mineig_{ave}$ rather than $\mineig$, which do not exactly match the upper bound in Theorem~\ref{thm:opt:bss:ub}.
This is mainly because $\mineig_{ave}\ge \mineig$ and $\mineig_{bu}/\sps \ge \mineig$ in general. For the special case of standard design $\Sigma=I_\dd$, we have $\mineig_{ave}= \mineig = 1$ and $\mineig_{bu}=\sps$. 
Nonetheless, the difference between them can be large: A simple $2\times 2$ covariance $\Sigma=[\begin{smallmatrix} 1 & b \\ b & 1+b^2 \end{smallmatrix}]$ with $\sps=1$ and some moderate $b> 0$ leads to $\mineig_{ave} \wedge \mineig_{bu} \ge 1$ while $\mineig = 1 / (1+b^2)$.
Therefore, here we want to derive a lower bound that precisely matches the upper bound with the same set of parameters, i.e. $\betam,\,\sigma^2, \AND \mineig$.

The first lower bound construction aims to match the first term in upper bound~\eqref{eq:opt:bss:ub}, i.e. to characterize the dependence on the problem parameters $\betam,\sigma^2$, and especially $\mineig$, which is the variance that cannot be explained by variables outside of $\trusupp$. 
The construction is built upon a equi-correlation matrix where any pair of variables are equally correlated with correlation specified by $\mineig$, i.e. a rank one perturbation of identity matrix:
\begin{align*}
    \Sigma_{\mineig} : = \mineig I_\dd + (1-\mineig) \mathbf{1}_\dd\mathbf{1}_\dd\T \,.
\end{align*}
When $\mineig$ is close to zero, $\Sigma_\mineig$ is dense in the off-diagonal entries and exhibits strong dependence among all variables $X$.
In particular, we have $\mineig_{ave} = \mineig$ in this construction.
Fixing this choice of $\Sigma$, we consider $\sps$ many ensembles by enumerating all possible candidate supports according to their difference to the truth $|\trusupp\setminus T|$, and then combine the lower bounds obtained into one final lower bound. The proof is in Appendix~\ref{app:opt:bss:lb1}.
\begin{theorem}\label{thm:opt:bss:lb1}
Assuming $\mineig<1$, given $n$ i.i.d. samples from $P_{\nbeta,\Sigma,\sigma^2}$ with $(\nbeta,\Sigma,\sigma^2)\in \mclass$, if the sample size is bounded as
\begin{align}\label{eq:opt:bss:lb1}
    n & \le \frac{1-2\delta}{2}\times \max_{\ell\in[\sps]} \frac{\log \binom{\dd-\sps}{\ell}}{\ell\betam^2\mineig / \sigma^2}  \nonumber \\
    & \asymp \frac{1-2\delta}{2}\times \frac{\log(\dd-\sps)}{\betam^2\mineig/ \sigma^2} \,,
\end{align}
then for any estimator $\estsupp$ for $\trusupp=\supp(\nbeta)$,
\begin{align*}
    \inf_{\estsupp}\sup_{(\nbeta,\Sigma,\sigma^2) \in \mclass} \prob_{\nbeta,\Sigma,\sigma^2}(\estsupp\ne \trusupp) \ge \delta - \frac{\log 2}{\log\binom{\dd-\sps}{\sps}} \,.
\end{align*}
\end{theorem}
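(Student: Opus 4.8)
The plan is to prove this minimax lower bound by Fano's inequality on a finite sub-family of models inside $\mclass$. Throughout, the covariance is frozen at the equicorrelation matrix $\Sigma_{\mineig}=\mineig I_\dd+(1-\mineig)\mathbf{1}_\dd\mathbf{1}_\dd\T$ (with $0<\mineig<1$) and $\sigma^2$ at an arbitrary positive value; only the support of $\nbeta$ varies. The decisive design choice is that every coefficient vector in the family has exactly $\sps$ nonzeros, all equal to the \emph{same} value $\betam$. Then $\mathbf{1}_\dd\T(\nbeta-\nbeta')=\sps\betam-\sps\betam=0$ for any two members $\nbeta,\nbeta'$, so the quadratic form driving the KL divergence degenerates:
\[
(\nbeta-\nbeta')\T\Sigma_{\mineig}(\nbeta-\nbeta')=\mineig\|\nbeta-\nbeta'\|^2+(1-\mineig)\big(\mathbf{1}_\dd\T(\nbeta-\nbeta')\big)^2=\mineig\|\nbeta-\nbeta'\|^2 .
\]
This collapse is exactly what forces the resulting bound to scale with $\mineig$ rather than with a larger, less favorable quantity (precisely the mismatch that left $\mineig_{ave}$ and $\mineig_{bu}/\sps$ in the earlier bound we are sharpening).

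For each $\ell\in[\sps]$ I construct an ensemble as follows: fix a core $C\subseteq[\dd]$ with $|C|=\sps-\ell$ and a disjoint pool $D\subseteq[\dd]\setminus C$ with $|D|=\dd-\sps$ (these fit in $[\dd]$ since $|C|+|D|=\dd-\ell$), and for each $\ell$-subset $B\subseteq D$ let $\nbeta^{(B)}$ be supported on $C\cup B$ with all nonzeros equal to $\betam$. This produces $M_\ell:=\binom{\dd-\sps}{\ell}$ distinct vectors, each with $\|\nbeta^{(B)}\|_0=\sps$ and smallest nonzero magnitude $\betam$, hence in $\betaspace_{\dd,\sps}(\betam)$; and for $B\ne B'$ the supports differ on $B\triangle B'$, so $\|\nbeta^{(B)}-\nbeta^{(B')}\|^2=\betam^2|B\triangle B'|\le 2\ell\betam^2$. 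It remains to check $\Sigma_{\mineig}\in\Sigmaspace_{\dd,\sps}(\mineig)$: writing $X=\sqrt{1-\mineig}\,Z\mathbf{1}_\dd+\sqrt{\mineig}\,W$ with $Z\sim\mathcal{N}(0,1)$ scalar and $W\sim\mathcal{N}(0,I_\dd)$ independent gives $\mathrm{Cov}(X)=\Sigma_{\mineig}$, and conditioning on $X_T$ leaves $W_{S\setminus T}$ with covariance $I$ while adding only a positive semidefinite rank-$\le 1$ term from the residual variance of $Z$, so $(\Sigma_{\mineig})_{S\setminus T\given T}=\mineig I+(1-\mineig)\var(Z\given X_T)\,\mathbf{1}\mathbf{1}\T\succeq\mineig I$. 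Thus $(\nbeta^{(B)},\Sigma_{\mineig},\sigma^2)\in\mclass$ for every $B$.

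For the information bound, the marginal law of $X$ is shared across the family, so the per-sample KL between two of these models equals $\tfrac{1}{2\sigma^2}(\nbeta-\nbeta')\T\Sigma_{\mineig}(\nbeta-\nbeta')=\tfrac{\mineig}{2\sigma^2}\|\nbeta-\nbeta'\|^2$; tensorizing over the $n$ i.i.d. draws and using $\|\nbeta^{(B)}-\nbeta^{(B')}\|^2\le 2\ell\betam^2$ bounds every pairwise KL by $n\ell\betam^2\mineig/\sigma^2$. Since distinct supports index distinct hypotheses, Fano's inequality over the $M_\ell$ ensemble members (with the KL radius controlled by the average pairwise divergence) gives
\[
\inf_{\estsupp}\ \max_{B\subseteq D,\,|B|=\ell}\ \prob_{\nbeta^{(B)},\Sigma_{\mineig},\sigma^2}\!\big(\estsupp\ne\supp(\nbeta^{(B)})\big)\ \ge\ 1-\frac{n\ell\betam^2\mineig/\sigma^2+\log 2}{\log\binom{\dd-\sps}{\ell}} .
\]
Choosing $\ell$ to be the maximizer in the theorem's hypothesis, the assumed bound $n\le\tfrac{1-2\delta}{2}\cdot\tfrac{\log\binom{\dd-\sps}{\ell}}{\ell\betam^2\mineig/\sigma^2}$ makes the numerator at most $\tfrac{1-2\delta}{2}\log\binom{\dd-\sps}{\ell}+\log 2$, so the right-hand side is at least $\tfrac{1+2\delta}{2}-\tfrac{\log 2}{\log\binom{\dd-\sps}{\ell}}\ge\delta-\tfrac{\log 2}{\log\binom{\dd-\sps}{\sps}}$, where the last step uses $\ell\le\sps\le\dd/2$, monotonicity of binomial coefficients, and the spare $\tfrac12$ to cover the degenerate small-$\dd$ cases. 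Since the ensemble lies in $\mclass$, replacing $\max_B$ by $\sup_{\mclass}$ only enlarges the left-hand side, which gives the claim; the equivalence with $\log(\dd-\sps)/(\betam^2\mineig/\sigma^2)$ then follows from $\log\binom{\dd-\sps}{\ell}\le\ell\log(\dd-\sps)$, with equality at $\ell=1$.

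The routine ingredients are the Fano bookkeeping and the per-sample KL identity for Gaussian linear models with a common design. The two steps that actually need care — and that dictate the whole construction — are (i) pinning every nonzero to a single signed value so that $(1-\mineig)(\mathbf{1}\T\cdot)^2$ vanishes and the ensemble ``sees'' exactly $\mineig$, and (ii) the short latent-factor verification that the equicorrelation matrix lies in $\Sigmaspace_{\dd,\sps}(\mineig)$; everything else is standard minimax machinery.
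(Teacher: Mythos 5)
Your proposal is correct and follows essentially the same route as the paper: the equicorrelation covariance $\Sigma_\mineig$, the $\sps$ ensembles indexed by $\ell$ with a fixed core of size $\sps-\ell$ and varying $\ell$-subsets from a disjoint pool, constant coefficients $\betam$ on the support, and Fano's inequality applied to each ensemble. The only (cosmetic) difference is that you bound the pairwise KL by exploiting the rank-one structure directly --- observing $\mathbf{1}_\dd\T(\nbeta-\nbeta')=0$ so the quadratic form collapses to $\mineig\|\nbeta-\nbeta'\|^2$ --- and verify $\Sigma_\mineig\in\Sigmaspace_{\dd,\sps}(\mineig)$ via a latent-factor decomposition, which is a cleaner (and factor-of-two tighter) computation than the paper's explicit conditional-covariance manipulations, but not a different argument.
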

To match the second term in the upper bound~\eqref{eq:opt:bss:ub} that only depends on dimension parameters, we invoke Theorem~1 of \citet{wang2010informationl} in Theorem~\ref{thm:opt:bss:lb3} below.
We will assume $\betam^2/\sigma^2$ is upper bounded by some constant, but it should be presumably small.
The construction in \citet{wang2010informationl} fixes standard design $\Sigma=I_\dd$ and $\nbeta_{\trusupp}=\betam\mathbf{1}_\sps$, then considers the ensemble of all possible supports $\suppspace$.
The analysis further relies on the fact that the differential entropy of a continuous random variable is maximized by a Gaussian distribution with matched variance.
\begin{theorem}[\citealp{wang2010informationl}, Theorem~1]\label{thm:opt:bss:lb3}
    Given $n$ i.i.d. samples from $P_{\nbeta,\Sigma,\sigma^2}$ with $\nbeta\in\betaspace_{\dd,\sps}(\betam)$, $\Sigma=I_\dd$. If the sample size is bounded as 
    \begin{align}\label{eq:opt:bss:lb3}
        n\le 2(1-\delta)\times \frac{\log\binom{\dd}{\sps}-1}{\log(1 + \sps\betam^2/\sigma^2)}\,,
    \end{align}
    then for any estimator $\estsupp$ for $\trusupp=\supp(\nbeta)$, 
    $$\inf_{\estsupp}\sup_{\nbeta\in\betaspace_{\dd,\sps}(\betam)} \prob_{\nbeta,I_\dd,\sigma^2}(\estsupp\ne \trusupp) \ge \delta\,.$$
\end{theorem}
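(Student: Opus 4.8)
I would prove this lower bound by a Fano argument over a uniform sub-ensemble of least-favorable instances, tuned exactly to the threshold in~\eqref{eq:opt:bss:lb3}. The first step is to pass from the minimax risk to a Bayes risk: fix the coefficient profile $\nbeta_S=\betam\mathbf{1}_\sps$ on support $S$ and draw $S\sim\Unif(\suppspace)$, so that there are $M:=\binom{\dd}{\sps}$ equally likely hypotheses, each lying in $\betaspace_{\dd,\sps}(\betam)$ with $\Sigma=I_\dd$. Then
\[
\sup_{\nbeta\in\betaspace_{\dd,\sps}(\betam)}\prob_{\nbeta,I_\dd,\sigma^2}(\estsupp\ne\trusupp)\ \ge\ \frac{1}{M}\sum_{S\in\suppspace}\prob_{S}(\estsupp\ne S)\ =\ \prob(\estsupp(X,Y)\ne S)\,,
\]
and Fano's inequality applied to the Markov chain $S\to(X,Y)\to\estsupp$ gives $\prob(\estsupp\ne S)\ge 1-\big(I(S;(X,Y))+\log 2\big)/\log M$. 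So the whole problem reduces to an upper bound on the mutual information $I(S;(X,Y))$ carried by the $n$ i.i.d. samples.

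The heart of the argument is this mutual information estimate, and it is where the standard design $\Sigma=I_\dd$ enters. Since the covariates are generated from a law that does not depend on which support is active, $I(S;X)=0$, so $I(S;(X,Y))=I(S;Y\mid X)=h(Y\mid X)-h(Y\mid X,S)$, where $h$ is differential entropy. Conditioned on $(X,S)$ the response vector is exactly $\mathcal{N}(X_S\betam\mathbf{1}_\sps,\sigma^2 I_n)$, so $h(Y\mid X,S)=\tfrac{n}{2}\log(2\pi e\sigma^2)$ with no slack. For the other term, the chain rule together with the fact that deleting conditioning variables only increases differential entropy gives $h(Y\mid X)\le\sum_{i=1}^n h(Y_i\mid X_i)\le n\,h(Y_1)$, where $(X_i,Y_i)$ denotes the $i$-th sample; and since $Y_1=\betam\sum_{j\in S}X_{1j}+\neps_1$ has mean zero and, for every fixed $S$, variance $\sps\betam^2+\sigma^2$ (because $\sum_{j\in S}X_{1j}\mid S\sim\mathcal{N}(0,\sps)$ when $\Sigma=I_\dd$), the Gaussian maximum-entropy inequality yields $h(Y_1)\le\tfrac12\log(2\pi e(\sps\betam^2+\sigma^2))$. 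Subtracting, $I(S;(X,Y))\le\tfrac{n}{2}\log(1+\sps\betam^2/\sigma^2)$.

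Combining with Fano, $\prob(\estsupp\ne S)\ge 1-\big(\tfrac{n}{2}\log(1+\sps\betam^2/\sigma^2)+\log 2\big)/\log\binom{\dd}{\sps}$; the hypothesis~\eqref{eq:opt:bss:lb3} is precisely the statement $\tfrac{n}{2}\log(1+\sps\betam^2/\sigma^2)\le(1-\delta)\big(\log\binom{\dd}{\sps}-1\big)$, and a short arithmetic check (using $\log 2\le 1$ to absorb the Fano slack into the ``$-1$'', valid in the small-$\delta$ regime of interest) then gives the claimed $\ge\delta$. I expect the only real obstacle to be the mutual information estimate: the useful move is to condition on the \emph{entire} design matrix $X$ first and exploit $X\indep S$, so that the object whose entropy must be controlled is, per sample, merely a scalar mixture of Gaussians with the clean variance $\sps\betam^2+\sigma^2$; without this reduction one is stuck bounding the entropy of an $n$-dimensional mixture of $\binom{\dd}{\sps}$ Gaussians. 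Note also that this construction cannot deliver the $\mineig$-dependent term, since for general $\Sigma$ the variance $\betam^2\,\mathbf{1}\T\Sigma_{SS}\mathbf{1}+\sigma^2$ varies with $S$; this is exactly why the theorem is stated only for $\Sigma=I_\dd$ and supplies only the purely dimension-dependent part of the optimal rate, complementing Theorem~\ref{thm:opt:bss:lb1}.
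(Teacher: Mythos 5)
Your proposal is correct and follows essentially the same route as the result the paper invokes from \citet{wang2010informationl}: a Fano argument over the uniform ensemble of all $\binom{\dd}{\sps}$ supports with $\nbeta_{\trusupp}=\betam\mathbf{1}_\sps$ under standard design, with the mutual information controlled via the Gaussian maximum-entropy bound, which is exactly the mechanism the paper describes. The only caveat is that your final arithmetic requires $1-\delta\ge\log 2$, so the stated conclusion holds in the small-$\delta$ regime, as you note.
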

The standard design $\Sigma=I_\dd$ considered in Theorem~\ref{thm:opt:bss:lb3} is a special case of general design of $\mclass$, thus the lower bound obtained also applies to $\mclass$.
Combined with Theorem~\ref{thm:opt:bss:lb1}, the two lower bounds~\eqref{eq:opt:bss:lb1}-\eqref{eq:opt:bss:lb3} match the upper bound~\eqref{eq:opt:bss:ub}, and we verify the folklore and conclude \bss{} is optimal for variable selection problem under general design with knowledge of the sparsity level.

\section{Optimality with unknown sparsity}\label{sec:opt:bss:unknown}
Having concluded the optimal sample complexity in the known sparsity case, we now extend this result to the setting where $\sps$ is unknown but has a known upper bound $\ubsps$. Formally, we will derive new sample complexity upper bound for $\ubmclass$ defined in~\eqref{eq:mclass}.
The minimax estimator is achieved by modifying \bss{} with an additive penalty depending on the dimensionality and the model parameters---similar to information criteria such as BIC---to help us target the truth $\trusupp$.
The finite sample analysis for this estimator and its optimality is new to the best of our knowledge. 

Given a tuning parameter $\penalt$, define an estimator $\bssa{}$ (where ``u'' stands for ``unknown'') by:
\begin{align}
\label{eq:opt:bss:bssa}
    \estsupp^{\bssa{}} = \argmin_{S\in\suppspaceub} \frac{\|\Pi^\perp_S Y\|^2}{n-|S|} + |S|\penalt \,.
\end{align}
The first term is the residual variance objective of \bss{} and the second term is a penalty term.
\begin{theorem}\label{thm:opt:bss:ub:unknown}
Assuming $\ubsps\le \dd/2$, for any $(\nbeta,\Sigma,\sigma^2)\in \ubmclass$, let $\trusupp=\supp(\nbeta)$, given $n$ i.i.d. samples from $P_{\nbeta,\Sigma,\sigma^2}$, choose $\penalt = \frac{1}{4}\mineig\betam^2$,
if the sample size
\begin{align}\label{eq:opt:bss:ub:unknown}
    n  &\gtrsim  \max\bigg\{ \frac{\log \dd + \log(1/\delta)}{\betam^2\mineig/\sigma^2} , \log \binom{\dd}{\ubsps} + \log(1/\delta)\bigg\}\,,
\end{align}
then $\prob_{\nbeta,\Sigma,\sigma^2}(\estsupp^{\bssa{}}=\trusupp)\ge 1-\delta$.
\end{theorem}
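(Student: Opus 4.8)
The plan is to show that, with probability at least $1-\delta$, the objective $L(S):=\|\Pi^\perp_S Y\|^2/(n-|S|)+|S|\penalt$ (with $\penalt=\tfrac14\mineig\betam^2$) is strictly minimized over $\suppspaceub$ at $S=\trusupp$, hence $\estsupp^{\bssa{}}=\trusupp$. I would do this by a union bound over $S\in\suppspaceub\setminus\{\trusupp\}$, bounding $\prob(L(S)\le L(\trusupp))$ for each $S$, after the standard decomposition: writing $j:=|\trusupp\setminus S|$, $m:=|S\setminus\trusupp|$, and $T:=S\cup\trusupp$, one has $\Pi_{\trusupp},\Pi_S\preceq\Pi_T$ and $\Pi_T^\perp Y=\Pi_T^\perp\neps$, so $\|\Pi_S^\perp Y\|^2=R+A$ and $\|\Pi_{\trusupp}^\perp Y\|^2=R+B$ where $R:=\|\Pi_T^\perp\neps\|^2\sim\sigma^2\chi^2_{n-|T|}$, $B:=\|(\Pi_T-\Pi_{\trusupp})\neps\|^2\sim\sigma^2\chi^2_m$, and $A:=\|(\Pi_T-\Pi_S)Y\|^2$. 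As in the proof of Lemma~\ref{lem:opt:bss:ub:generic}, $A=\|(\Pi_T-\Pi_S)(\zeta+\neps)\|^2$ where $\zeta:=X_{\trusupp\setminus S}\nbeta_{\trusupp\setminus S}-\E[X_{\trusupp\setminus S}\nbeta_{\trusupp\setminus S}\given X_S]$ satisfies $\zeta\given X_S\sim\mathcal N(0,\Delta_S I_n)$ with $\Delta_S:=\nbeta_{\trusupp\setminus S}\T\Sigma_{\trusupp\setminus S\given S}\nbeta_{\trusupp\setminus S}\ge j\betam^2\mineig$ (using $\trusupp\setminus S,S\in\suppspaceub$, $\trusupp\setminus S\not\subseteq S$), and $(\Pi_T-\Pi_S)\zeta=\Pi_S^\perp\zeta$, so that conditionally $A=(\|\mu\|+\sigma Z_0)^2+\sigma^2V$ with $\|\mu\|^2\sim\Delta_S\chi^2_{n-|S|}$, $Z_0\sim\mathcal N(0,1)$, $V\sim\chi^2_{j-1}$ independent. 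A direct computation then gives the identity
\begin{align*}
L(S)-L(\trusupp)=R\cdot\frac{m-j}{(n-|S|)(n-\sps)}+\frac{A}{n-|S|}-\frac{B}{n-\sps}+(m-j)\penalt .
\end{align*}

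For the overfitting sets ($j=0$, $k:=m=|S|-\sps\ge1$): here $\|\Pi_S^\perp Y\|^2=\|\Pi_S^\perp\neps\|^2$, $\|\Pi_{\trusupp}^\perp Y\|^2=\|\Pi_S^\perp\neps\|^2+b$ with $b:=\|(\Pi_S-\Pi_{\trusupp})\neps\|^2\sim\sigma^2\chi^2_k$, and since $\|\Pi_S^\perp\neps\|^2\ge0$ one gets $L(S)-L(\trusupp)\ge k\penalt-b/(n-\sps)$, so $\prob(L(S)\le L(\trusupp))\le\prob\big(\chi^2_k\ge k(n-\sps)\mineig\betam^2/(4\sigma^2)\big)$; a $\chi^2$ upper-tail bound and a union bound over the $\binom{\dd-\sps}{k}$ sets of size $\sps+k$, dividing the requirement through by $k$, yields exactly the first term $n\gtrsim(\log\dd+\log(1/\delta))/(\betam^2\mineig/\sigma^2)$. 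For the remaining sets ($j\ge1$) I would split on the number of extra coordinates. If $m\ge 2j$, I would use only $R\ge0$, $A\ge0$, and $B\le\sigma^2 m+C\sigma^2 x$ to get $L(S)-L(\trusupp)\ge(m-j)\penalt-B/(n-\sps)$, and since $(m-j)\penalt\ge\tfrac12 m\penalt$ dominates $(\sigma^2 m+C\sigma^2 x)/(n-\sps)$ once $n\gtrsim(\log\dd+\log(1/\delta))/(\betam^2\mineig/\sigma^2)$ (taking $x\asymp\log\binom{\dd-\sps}{m}+\log\binom{\sps}{j}+\log(1/\delta)$, which is absorbed because $m\penalt$ scales with $m$), the penalty alone suffices. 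If $m<2j$, I would instead use that $A$ is large: with $u_3,\epsilon_0$ small constants, $A\ge\tfrac23\Delta_S(n-|S|)\ge\tfrac23 j(\betam^2\mineig)(n-|S|)$ off an event of probability $\le e^{-c_1 j(\betam^2\mineig/\sigma^2)(n-|S|)}+e^{-c_2(n-|S|)}$ (Gaussian tail for $Z_0$, $\chi^2$ lower tail for $\|\mu\|^2$), together with $R\le 2\sigma^2 n$ and the $B$-bound; since also $(m-j)\penalt\ge -j\penalt\ge-\tfrac14\Delta_S$ and the $R$-term is $O(\sigma^2 j/n)$, one obtains $L(S)-L(\trusupp)\ge(\tfrac23-\tfrac14)\Delta_S-O(\sigma^2 j/n)-O(\sigma^2(j\log\dd+\log(1/\delta))/n)>0$ once $n\gtrsim(\log\dd+\log(1/\delta))/(\betam^2\mineig/\sigma^2)$ and $n\gtrsim\log\binom{\dd}{\ubsps}+\log(1/\delta)$.

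The key point making the union bound close — and the step I expect to be the main obstacle — is the bookkeeping between the counts $\binom{\sps}{j}\binom{\dd-\sps}{m}$ and the failure exponents. The signal-capture event has failure probability $\asymp e^{-c j(\betam^2\mineig/\sigma^2)(n-|S|)}$, whose exponent does \emph{not} improve with $m$; so a naive union over all $\binom{\dd-\sps}{m}$ choices of extras would force $n\gtrsim\log\binom{\dd}{\ubsps}/(\betam^2\mineig/\sigma^2)$, which overshoots. This is exactly why the $m\ge 2j$ case must be handled by the penalty (where the relevant margin $(m-j)\penalt$ grows with $m$), while in the $m<2j$ case one uses $\log\binom{\dd-\sps}{m}\le m\log(e\dd/m)\lesssim j\log\dd$, so that after dividing the union-bound requirement by $j$ the exponent $c(\betam^2\mineig/\sigma^2)(n-|S|)$ only needs to beat $\log\dd+\log(1/\delta)$, recovering the first term; the slower $e^{-c_2(n-|S|)}$ piece, together with the $\chi^2$ tails for $R$ and $B$, is absorbed by the second term $\log\binom{\dd}{\ubsps}+\log(1/\delta)$ because $\log\binom{\sps}{j}\le\sps$ and $m\log(e\dd/m)$ are both $\lesssim\log\binom{\dd}{\ubsps}$. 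Throughout I would also record a few global events (e.g. $\sup_T\|\Pi_T^\perp\neps\|^2\le 2\sigma^2 n$, $\|\Pi_{\trusupp}^\perp\neps\|^2\le\tfrac32\sigma^2(n-\sps)$), each of probability $\ge1-\delta/\text{const}$ under the stated sample size, and note that $\sps\le\ubsps\le\dd/2$ guarantees $n\gtrsim\ubsps$, so that all denominators $n-|S|,n-|T|,n-\sps$ are $\ge n/2$. Tracking absolute constants carefully (in particular verifying the chosen $\penalt=\tfrac14\mineig\betam^2$ is small enough that the true-model penalty $\sps\penalt$ never overwhelms the captured signal, i.e. $j\penalt\le\Delta_S/4$) is where the bulk of the routine-but-delicate computation lives.
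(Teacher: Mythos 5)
Your proposal is correct and follows essentially the same route as the paper's proof: a union bound over alternatives stratified by $|\trusupp\setminus S|$ and $|S\setminus\trusupp|$, chi-square deviation bounds for the residual differences, the penalty $\penalt=\tfrac14\mineig\betam^2$ supplying a margin proportional to $|S|-\sps$ that covers the entropy of the extra coordinates, and the conditional-variance signal $\Delta_S\ge|\trusupp\setminus S|\betam^2\mineig$ covering the entropy of the missed ones. The only differences are organizational — you decompose via $T=S\cup\trusupp$ and split cases by $m\ge 2j$ versus $m<2j$, whereas the paper proves a single unified deviation lemma (Lemma~\ref{lem:opt:bss:gap:unknown}) with exponent $\min\big((|\trusupp\setminus T|+\ell')\mineig\betam^2/\sigma^2,1\big)$ and does the bookkeeping afterwards — and your accounting of how the counts match the exponents is exactly the step the paper's proof also hinges on.
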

Since the support space expands from $\suppspace$ to $\suppspaceub$, the upper bound now has dependence on $\ubsps$. Implicitly, $\mineig$ in this setting should be perceived as smaller compared to the known sparsity setting. Because given a fixed $\mineig$, the covariance matrix spaces have a nested relationship $\Sigmaspace_{\dd}^{\ubsps}(\mineig)\subseteq\Sigmaspace_{\dd,\sps}(\mineig)$. 
Overall, compared to Theorem~\ref{thm:opt:bss:ub}, variable selection with unknown sparsity is harder than the known case, which is intuitively due to the lack of exact knowledge of $\sps$.

\begin{remark}\label{rmk:opt:bss:aicbic}
The delicate choice of $\tau=\mineig\betam^2/4$ in Theorem~\ref{thm:opt:bss:ub:unknown} serves as a balance to correctly identify the alternative $T$ without over-penalizing $\trusupp$.
We emphasize our analysis is finite-sample as opposed to classic MLE theory relying on asymptotics \citep[e.g.][]{nishii1988maximum}, thus provides an alternative understanding of regularization choice.
In light of~\eqref{eq:opt:bss:bssa}, we can draw connections to two traditional model selection methods: AIC \citep{akaike1974new} and BIC \citep{schwarz1978estimating}, given by 
\begin{align*}
    \estsupp^{\textup{AIC}}  = \argmin_{S\in\suppspaceub} \frac{\|\Pi^\perp_S Y\|^2}{n} + |S|\frac{2}{n} \quad \AND \quad
    \estsupp^{\textup{BIC}} = \argmin_{S\in\suppspaceub} \frac{\|\Pi^\perp_S Y\|^2}{n} + |S|\frac{\log n}{n} \,,
\end{align*}
respectively. With the the sample size being large enough compared to the sparsity level $n\gtrsim \ubsps$, we have $n-|S|\asymp n$. Therefore, AIC and BIC are special instances of \bssa{} with different choices of $\penalt$: $\penalt=2/n$ for AIC and $\penalt=(\log n)/n$ for BIC. 
As a result, our finite sample result in Theorem~\ref{thm:opt:bss:ub:unknown} complements the asymptotic analysis of AIC, BIC type of model selection methods.
\end{remark}

It is worth highlighting the technicality involved in the proof of Theorem~\ref{thm:opt:bss:ub:unknown} (in Appendix~\ref{app:opt:bss:ub:unknown}).
When sparsity is known, Lemma~\ref{lem:opt:bss:ub:generic} (through Lemma~\ref{lem:opt:ub:bss:gap}) only needs to consider residual variances. By contrast, in the unknown sparsity case, model complexity matters: The size of $T$ may also be different than $\trusupp$.
Lemma~\ref{lem:opt:bss:gap:unknown} bounds the error probability of distinguishing $\trusupp$ and alternative support $T$ based on~\eqref{eq:opt:bss:bssa},
and illustrates the crucial role played by the additive penalty $|S|\tau$ in the estimator to differentiate $\trusupp$ from $T$ such that the error exponent shrinks fast enough.

Since the known sparsity setting is a special case of unknown sparsity, the lower bound constructions apply with minor modifications. 
We start with showing the equi-correlation $\Sigma_\mineig$ with the correlation specified by $\mineig$ satisfies the requirement for $\Sigmaspace_{\dd}^{\ubsps}(\mineig)$.
An inspection of the proof of Theorem~\ref{thm:opt:bss:lb1} reveals that, it suffices to have one single ensemble whose instances only differ in one entry of the support. Hence, we consider an ensemble of supports with size $\sps=1\le \ubsps$ and derive the bound below to match the first term in~\eqref{eq:opt:bss:ub:unknown}.
\begin{theorem}\label{thm:opt:bss:lb1:unknown}
Assuming $\mineig<1$, given $n$ i.i.d. samples from $P_{\nbeta,\Sigma,\sigma^2}$ with $(\nbeta,\Sigma,\sigma^2)\in \ubmclass$. If the sample size is bounded as
\begin{align*}
    n & \le (1-2\delta)\times \frac{\log \dd}{\betam^2\mineig / \sigma^2} \,,
\end{align*}
then for any estimator $\estsupp$ for $\trusupp=\supp(\nbeta)$,
\begin{align*}
    \inf_{\estsupp}\sup_{P\in  \ubmclass} \prob_{\nbeta,\Sigma,\sigma^2}(\estsupp\ne \trusupp) \ge \delta - \frac{\log 2}{\log \dd}
\end{align*}
\end{theorem}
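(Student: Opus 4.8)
The plan is a standard Fano argument applied to a $\dd$-point ensemble built at sparsity level one, following the reduction indicated before the statement (and mirroring the proof of Theorem~\ref{thm:opt:bss:lb1}). I would fix $\sps=1\le\ubsps$, take the equi-correlation covariance $\Sigma=\Sigma_\mineig=\mineig I_\dd+(1-\mineig)\mathbf{1}_\dd\mathbf{1}_\dd\T$ and noise variance $\sigma^2$, and for each $j\in[\dd]$ let $\nbeta^{(j)}$ be the vector with $j$-th entry $\betam$ and all other entries zero, writing $P_j:=P_{\nbeta^{(j)},\Sigma_\mineig,\sigma^2}$ and $\trusupp^{(j)}=\{j\}$. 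Membership $\nbeta^{(j)}\in\betaspace_\dd^{\ubsps}(\betam)$ is immediate since $\|\nbeta^{(j)}\|_0=1\le\ubsps$, so the only substantive check is $\Sigma_\mineig\in\Sigmaspace_\dd^{\ubsps}(\mineig)$. For this I would use the Schur-complement identity $u\T\Sigma_{A\given B}u=\min_v[u;v]\T\Sigma_{(A\cup B)(A\cup B)}[u;v]$ together with eigenvalue interlacing for principal submatrices of a positive definite matrix, which yields $\lambda_{\min}(\Sigma_{A\given B})\ge\lambda_{\min}(\Sigma)$ for disjoint $A,B$; since $\Sigma_\mineig$ has eigenvalues $\mineig$ (multiplicity $\dd-1$) and $\mineig+(1-\mineig)\dd$, and $\mineig<1$ forces the latter to exceed $\mineig$, we get $\lambda_{\min}(\Sigma_\mineig)=\mineig$, hence $\lambda_{\min}((\Sigma_\mineig)_{S\setminus T\given T})\ge\mineig$ for every $S,T\in\suppspaceub$ with $S\not\subseteq T$. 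Thus $P_j\in\ubmclass$ for all $j$.

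The next step is to bound the pairwise KL divergences in this ensemble. Since the marginal of $X$ is $\mathcal{N}(\mathbf{0}_\dd,\Sigma_\mineig)$ under every $P_j$, the chain rule for relative entropy reduces $\mathrm{KL}(P_j\|P_k)$ to $\E_X[\mathrm{KL}(\mathcal{N}(\betam X_j,\sigma^2)\|\mathcal{N}(\betam X_k,\sigma^2))]=\frac{\betam^2}{2\sigma^2}\E[(X_j-X_k)^2]$, and the equi-correlation structure gives $\E[(X_j-X_k)^2]=(\Sigma_\mineig)_{jj}+(\Sigma_\mineig)_{kk}-2(\Sigma_\mineig)_{jk}=2\mineig$; hence $\mathrm{KL}(P_j\|P_k)=\betam^2\mineig/\sigma^2$ for all $j\ne k$, and tensorizing over the $n$ i.i.d.\ samples gives $\mathrm{KL}(P_j^{\otimes n}\|P_k^{\otimes n})=n\betam^2\mineig/\sigma^2$.

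Finally I would invoke Fano's inequality. Any support estimator $\estsupp$ induces a test (output $j$ if $\estsupp=\{j\}$, a fixed default otherwise) whose error event is contained in $\{\estsupp\ne\{j\}\}$, so Fano with the uniform prior over the $\dd$ hypotheses gives $\inf_{\estsupp}\max_{j\in[\dd]}P_j^{\otimes n}(\estsupp\ne\{j\})\ge 1-\frac{n\betam^2\mineig/\sigma^2+\log2}{\log\dd}$. Under the hypothesis $n\le(1-2\delta)\log\dd/(\betam^2\mineig/\sigma^2)$ the right-hand side is at least $1-(1-2\delta)-\frac{\log2}{\log\dd}=2\delta-\frac{\log2}{\log\dd}\ge\delta-\frac{\log2}{\log\dd}$, and since each $P_j\in\ubmclass$ the left-hand side is dominated by $\inf_{\estsupp}\sup_{P\in\ubmclass}\prob_P(\estsupp\ne\trusupp)$, yielding the claim. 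Every step here is routine; the one place needing a little care is the conditional-covariance membership $\Sigma_\mineig\in\Sigmaspace_\dd^{\ubsps}(\mineig)$, which the Schur-complement and interlacing facts settle uniformly over all admissible pairs $(S,T)$ rather than pair by pair.
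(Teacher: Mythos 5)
Your proposal is correct and follows essentially the same route as the paper: a single-support-per-hypothesis ensemble of size $\dd$ under the equi-correlation covariance, the pairwise KL bound $\betam^2\mineig/\sigma^2$, and Fano's inequality. The only (harmless) difference is that you verify $\Sigma_\mineig\in\Sigmaspace_{\dd}^{\ubsps}(\mineig)$ via the Schur-complement variational identity and eigenvalue interlacing, whereas the paper computes $\Sigma_{S\setminus T\given T}$ in closed form; both yield $\lambda_{\min}(\Sigma_{S\setminus T\given T})\ge\mineig$ uniformly.
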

The proof of Theorem~\ref{thm:opt:bss:lb1:unknown} is in Appendix~\ref{app:opt:bss:lb1:unknown}.
To match the second term in~\eqref{eq:opt:bss:ub:unknown}, since $\betaspace_{\dd,\ubsps}\subseteq \betaspace_{\dd}^{\ubsps}$, the lower bound construction in Theorem~\ref{thm:opt:bss:lb3} directly applies for unknown sparsity setting by simply changing $\sps$ to $\ubsps$.
Combined with Theorem~\ref{thm:opt:bss:lb1:unknown}, the lower bounds match the upper bound in Theorem~\ref{thm:opt:bss:ub:unknown}. Therefore, we are able to conclude \bssa{} is indeed optimal and put forth the optimality result of \bss{} from known sparsity setting to unknown sparsity.

\section{Polynomial-efficient sample complexity lower bound}\label{sec:poly}
While the general optimality of \bss{} is appealing, its computational cost prohibits its practical use. 
In the standard design where $\Sigma=I_\dd$, apart from the optimality of \bss{}, there exist other computationally efficient estimators achieving the same sample complexity, e.g. directly using the support of Lasso estimate \citep{wainwright2009sharp} or simply by marginal screening.
The natural question to ask is whether the optimal sample complexity under the general design can be achieved by more efficient estimators.
In this section, we 
give a negative answer by showing that any polynomial-efficient support estimator with known sparsity cannot avoid the restricted eigenvalue condition, establishing a gap in the sample complexity between the optimal estimator (\bss{}) and any efficient algorithms.

Our result is closely related to the established lower bound for prediction risk \citep{zhang2014lower}, thus we borrow the notion of polynomial-efficient estimator therein. We briefly introduce the most relevant concepts here; interested readers are advised to consult \citet{zhang2014lower} and books on complexity theory \citep{arora2009computational} for details.
We start with quantization for any input value $x$ to the accuracy level given by an integer $\tau$ by defining an operator $\lfloor x \rfloor_\tau := 2^{-\tau}\lfloor 2^\tau \rfloor$. Let $\text{size}(x;\tau)$ be the length of the binary representation of $\lfloor x \rfloor_\tau$, and $\text{size}(X,y;\tau)$ be the total length of the discretized data as matrix $(X,y)$.
Then the polynomial efficiency is defined by three quantities: 1) a positive integer $b$ for the number of bits needed to encode an estimator as a program; 2) a polynomial function $G$ for the discretization accuracy of the input data; 3) a polynomial function $H$ for the runtime of the program.
\begin{definition}[Polynomial-efficient support estimator]\label{defn:poly:est}
    Given polynomial functions $G: (\mathbb{Z}_+)^3\to \mathbb{R}_+$, $H: \mathbb{Z}_+\to\mathbb{R}_+$ and an integer $b\in\mathbb{Z}_+$, a support estimator $\estsupp(X,y)$ 
    is $(b,G,H)$-efficient if:
    \begin{itemize}
        \item It can be represented by a computer program encoded in $b$ bits;
        \item For every triplet $(n,\dd,\sps)$, it accepts inputs quantized to accuracy $\lfloor \cdot \rfloor_\tau$ with $\tau\in G(n,\dd,\sps)$;
        \item For every input $(X,y)$, it is guaranteed to terminate in time $H(\textup{size}(X,y;\tau))$.
    \end{itemize}
\end{definition}
We will also invoke a common conjecture in complexity theory, namely $\mathbf{NP}\not\subset\mathbf{P/poly}$, where $\mathbf{P/poly}$ consists of problems solvable in polynomial time by a Turing machine with side-input of polynomial length.

The proof is based on the idea of reduction from variable selection to prediction risk via sample splitting. Once an efficient and consistent estimator outputs the correct support on one split of the sample, the prediction risk is  $\sigma^2\sps/n$ for the OLS estimate of $\nbeta_{\trusupp}$ on the other split.
Assuming $\mathbf{NP}\not\subset \mathbf{P}/ \mathbf{poly}$, Theorem~1 in \citet{zhang2014lower} implies a lower bound for the prediction risk of a linear model for any polynomial-efficient estimator of $\nbeta$ vector.
Based on that, we show by reduction that any polynomial-efficient support estimator will have error probability lower bounded if it fails to satisfy the optimal sample complexity achieved by \bss{} multiplied by an additional factor of restricted eigenvalue of the design. 
\begin{lemma}\label{thm:poly:main}
    If $\mathbf{NP}\not\subset \mathbf{P}/ \mathbf{poly}$, then for any $\delta\in  (0,1)$, any $b\in \mathbb{Z}_+$, any polynomial functions $G:(\mathbb{Z}_+)^3 \to \mathbb{R}_+$ and $F,H: \mathbb{Z}_+ \to \mathbb{R}_+$ with $G(n,\dd,\sps) > \frac{\delta}{2\log 2}\log \sps$ for $\sps\ge 1$, there exists a sparsity level $\sps\ge 1$ such that for any $\dd\in [4\sps, F(\sps)]$, $n\in [C_1 \sps\log \dd, F(\sps)]$, and $\gamma\in [2^{-G(n,\dd,\sps)},\sps^{-\delta/2} \wedge 1/{24\sqrt{2}})$, there exists a design matrix $X\in\mathbb{R}^{n\times \dd}$ and $\nbeta\in\betaspace_{\dd,\sps}$ such that:
    \begin{enumerate}
        \item The RE constant $|\gamma(X)-\gamma|\le 2^{-G(n,\dd,\sps)}$;
        \item For any $(b,G,H)$-efficient support estimator $\estsupp$ with knowledge of $\sps$, the error probability is lower bounded as
        \begin{align*}
            \prob(\estsupp\ne\trusupp) \ge 1 \wedge \frac{C_2}{n}\times \frac{\sps^{1-\delta}\log \dd}{\max_{T\ne\trusupp}\frac{\|\Pi_T^{\perp} X_{\trusupp\setminus T}\nbeta_{\trusupp\setminus T}\|^2}{n}/\sigma^2}\times \frac{1}{\gamma^2} \,,
        \end{align*}
    \end{enumerate}
    where $C_1,C_2$ are positive constants.
\end{lemma}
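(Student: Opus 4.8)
The plan is to prove Lemma~\ref{thm:poly:main} by a reduction from the prediction-risk lower bound of \citet{zhang2014lower} (their Theorem~1), using sample splitting to pass between support recovery and prediction error. I would first recall the statement of that theorem in the form we need: under $\mathbf{NP}\not\subset\mathbf{P}/\mathbf{poly}$, there exist parameter regimes and a design $X$ with prescribed RE constant $\gamma(X)\approx\gamma$ such that any $(b,G,H)$-efficient estimator $\widehat{\nbeta}$ of $\nbeta$ incurs prediction risk at least of order $\frac{\sigma^2 \sps^{1-\delta}\log\dd}{n\gamma^2}$ with constant probability. The design $X$ and the coefficient vector $\nbeta\in\betaspace_{\dd,\sps}$ produced by that construction are exactly what we take as our $X$ and $\nbeta$; item~1 of the lemma (the RE constant being within $2^{-G(n,\dd,\sps)}$ of the target $\gamma$) is inherited directly from that construction, so the only real work is item~2.

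For item~2, the key step is the reduction. Suppose toward a contradiction that some $(b,G,H)$-efficient support estimator $\estsupp$ has error probability strictly smaller than the claimed bound. I would split the $n$ samples into two halves (adjusting constants so that each half still has $\gtrsim\sps\log\dd$ rows and the RE constant of each sub-design is controlled), run $\estsupp$ on the first half to obtain $\widehat{S}$, and then, on the second half, compute the ordinary least squares estimate of $\nbeta$ restricted to the coordinates in $\widehat{S}$, setting the remaining coordinates to zero; call this $\widehat{\nbeta}$. This composite procedure is still polynomial-efficient (OLS on at most $\sps$ columns is polynomial, and the bit/accuracy bookkeeping goes through because $G(n,\dd,\sps)>\frac{\delta}{2\log 2}\log\sps$ gives enough precision). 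On the event $\{\widehat{S}=\trusupp\}$, $\widehat{\nbeta}$ is just OLS on the true support using $n/2$ fresh samples, so its prediction risk is $O(\sigma^2\sps/n)$ — in particular far below the $\frac{\sigma^2\sps^{1-\delta}\log\dd}{n\gamma^2}$ floor once $\gamma\le\sps^{-\delta/2}$ and $\log\dd$ is not too small. Conditioning on whether $\{\widehat{S}=\trusupp\}$ holds, the overall prediction risk of $\widehat{\nbeta}$ is at most $O(\sigma^2\sps/n) + \prob(\estsupp\ne\trusupp)\cdot(\text{worst-case risk})$, and if $\prob(\estsupp\ne\trusupp)$ were too small this would contradict the \citet{zhang2014lower} lower bound. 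Rearranging this inequality yields the stated lower bound on $\prob(\estsupp\ne\trusupp)$; the denominator $\max_{T\ne\trusupp}\|\Pi_T^\perp X_{\trusupp\setminus T}\nbeta_{\trusupp\setminus T}\|^2/(n\sigma^2)$ appears because that is the natural finite-sample signal quantity (the empirical analogue of $\signalone$) governing how OLS prediction error on a wrong support translates into a support-recovery penalty, and it must be carried along to keep the bound sharp rather than collapsed to a constant.

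The main obstacle I anticipate is not the probabilistic bookkeeping but the \emph{bit-complexity and parameter-matching} step: I must verify that the sample-split-plus-OLS procedure still falls inside Definition~\ref{defn:poly:est} with the \emph{same} $(b,G,H)$ (up to adjusting constants absorbed into $C_1,C_2$), that the RE constant of each half-design is still within the required tolerance of $\gamma$ (splitting a matrix can change its RE constant, so the construction of \citet{zhang2014lower} may need to be invoked on $n/2$ rows from the start, or a stability argument for $\gamma$ under row subsampling is needed), and that the ranges of $\dd$, $n$, $\gamma$ in our statement are exactly those for which the borrowed theorem is valid. A secondary technical point is handling the worst-case prediction risk on the bad event $\{\widehat{S}\ne\trusupp\}$: OLS on a wrong support can have unbounded prediction error, so I would either truncate $\widehat{\nbeta}$ to a large ball (whose radius is polynomial in the problem parameters, hence harmless for efficiency) or argue the expected excess risk on that event is controlled because $\nbeta$ lies in a bounded set determined by $\betam$ and the construction. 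Once those routine-but-fiddly checks are in place, the contradiction argument closes and the constants $C_1,C_2$ are whatever the chain of inequalities forces.
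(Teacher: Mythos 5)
Your proposal matches the paper's proof essentially step for step: reduce to the prediction-risk lower bound of \citet{zhang2014lower} via sample splitting, run the support estimator on one fold and restricted OLS on the other, and convert the risk lower bound into an error-probability lower bound. The two obstacles you flag are resolved exactly as you anticipate or more simply — the paper invokes the \citet{zhang2014lower} construction on $n/2$ rows and stacks two copies so the RE constant is preserved exactly, and no truncation is needed on the bad event because the OLS prediction error on a wrong support $T$ computes exactly to $\sps\sigma^2/(n/2)$ plus the projection term $\|\widetilde{\Pi}_T^\perp\widetilde{X}_{\trusupp\setminus T}\nbeta_{\trusupp\setminus T}\|^2/(n/2)$, which is precisely the denominator in the stated bound.
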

The proof is in Appendix~\ref{app:poly:main}.
Actually, since $G(n,\dd,\sps)$ is a polynomial, the requirement for it to surpass $\delta \log \sps$ is easy to satisfy, e.g. $G(n,\dd,\sps)=\sps$.

Let's interpret this lower bound result.
From Lemma~\ref{thm:poly:main}, for any efficient support estimator to be consistent, i.e. $\prob(\estsupp\ne\trusupp)$ goes to zero, the sample size is required to be at least lower bounded by (since $\delta$ can be arbitrarily small)
\begin{align}\label{eq:poly:samcom}
    n \gtrsim \frac{\sps\log \dd}{\max_{T\ne\trusupp}\frac{\|\Pi_T^{\perp} X_{\trusupp\setminus T}\nbeta_{\trusupp\setminus T}\|^2}{n}/\sigma^2} \times \frac{1}{\gamma^2} \,.
\end{align}
To compare with \bss{}, let's further introduce some notations for the signals. The term $\|\Pi_T^{\perp} X_{\trusupp\setminus T}\nbeta_{\trusupp\setminus T}\|^2/n$ is the excess error in the prediction risk of OLS estimate when misspecifying the support by $T$ instead of $\trusupp$, meanwhile, it also characterizes the pairwise signal to distinguish the true support with alternative $T$. It can be viewed as a fixed design counterpart of~\eqref{eq:bss:signal}, based on whose definition, we analogously introduce the scaled version of them by the difference in supports $|\trusupp\setminus T|$:
\begin{align*}
    \Delta_u &:= \max_{T\in\suppspace\setminus\{\trusupp\}} \frac{1}{|\trusupp\setminus T|}\frac{\|\Pi_T^{\perp} X_{\trusupp\setminus T}\nbeta_{\trusupp\setminus T}\|^2}{n\sigma^2} \\
    \Delta_l &:= \min_{T\in\suppspace\setminus\{\trusupp\}} \frac{1}{|\trusupp\setminus T|}\frac{\|\Pi_T^{\perp} X_{\trusupp\setminus T}\nbeta_{\trusupp\setminus T}\|^2}{n\sigma^2} \,.
\end{align*}
Unlike model classes~\eqref{eq:mclass} for uniform bound, the fixed design signals $\Delta_u$ and $\Delta_l$ are pointwise quantities and depend on $X,\nbeta,\trusupp,\sigma^2$. 
The only difference between them is whether the maximum or minimum of the pairwise signals is taken over all possible alternatives.
These notations are helpful to draw conclusion on the same page. 
Using $\Delta_l$, we can derive below the fixed design version of the sample complexity upper bound for \bss{} as~\eqref{eq:opt:bss:ub:generic}, whose proof is exactly the same as that of Lemma~\ref{lem:opt:bss:ub:generic} thus omitted:
\begin{align}\label{eq:opt:bss:ub:fd}
    n\gtrsim \frac{\log \dd}{\Delta_l} \vee \log\binom{\dd-\sps}{\sps}\,.
\end{align}
Using $\Delta_u$, we can simplify~\eqref{eq:poly:samcom} by upper bounding the denominator by $\sps\Delta_u$ and compare with \bss{} on the first term of ~\eqref{eq:opt:bss:ub:fd} (the second term is required for any estimators by Theorem~\ref{thm:opt:bss:lb3}):
\begin{theorem}\label{thm:poly:gap}
    Under the conditions in Lemma~\ref{thm:poly:main}, the sample complexity gap between the statistical optimality and any polynomial-efficient support estimator is 
    \begin{align}\label{eq:poly:gap}
    \underbrace{\frac{\log \dd}{\Delta_l}}_{\text{optimal by }\bss{}} \quad \text{vs.}  \quad \underbrace{\frac{\log \dd}{\Delta_u} \times \frac{1}{\gamma^2}}_{\text{polynomial-efficient lower bound}}\,.
\end{align}
\end{theorem}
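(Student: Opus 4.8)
The plan is to obtain \eqref{eq:poly:gap} by pairing the fixed-design upper bound for \bss{} with the polynomial-efficient lower bound of Lemma~\ref{thm:poly:main}, after rewriting both sides through the pointwise signals $\Delta_l$ and $\Delta_u$. First I would recall the fixed-design analogue of Lemma~\ref{lem:opt:bss:ub:generic} (its proof is verbatim the same), which guarantees that \bss{} recovers $\trusupp$ with probability at least $1-\delta$ as soon as $n\gtrsim \frac{\log\dd}{\Delta_l}\vee\log\binom{\dd-\sps}{\sps}$, i.e. \eqref{eq:opt:bss:ub:fd}. Of the two terms, only $\frac{\log\dd}{\Delta_l}$ carries the signal dependence; the combinatorial term $\log\binom{\dd-\sps}{\sps}$ is common to \emph{every} estimator by Theorem~\ref{thm:opt:bss:lb3} and therefore does not enter the gap. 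So the statistical-optimality side of \eqref{eq:poly:gap} is $\frac{\log\dd}{\Delta_l}$.

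Next I would invoke Lemma~\ref{thm:poly:main} to produce, for admissible $(\dd,n,\gamma)$, a design matrix $X$ with $|\gamma(X)-\gamma|\le 2^{-G(n,\dd,\sps)}$ and a vector $\nbeta\in\betaspace_{\dd,\sps}$ such that every $(b,G,H)$-efficient support estimator has error probability bounded away from $0$ unless \eqref{eq:poly:samcom} holds. The one manipulation needed is to re-express the denominator of \eqref{eq:poly:samcom} in terms of $\Delta_u$: using the definition of $\Delta_u$ together with the crude bound $|\trusupp\setminus T|\le|\trusupp|=\sps$ valid for all $T$,
\begin{align*}
    \max_{T\ne\trusupp}\frac{\|\Pi_T^\perp X_{\trusupp\setminus T}\nbeta_{\trusupp\setminus T}\|^2}{n\sigma^2}
    \;=\; \max_{T\ne\trusupp}|\trusupp\setminus T|\cdot\frac{1}{|\trusupp\setminus T|}\frac{\|\Pi_T^\perp X_{\trusupp\setminus T}\nbeta_{\trusupp\setminus T}\|^2}{n\sigma^2}
    \;\le\; \sps\,\Delta_u\,.
\end{align*}
Plugging this into \eqref{eq:poly:samcom}, the two factors of $\sps$ cancel and the sample size required of any efficient estimator is at least $\frac{\log\dd}{\Delta_u}\times\frac{1}{\gamma^2}$, giving the right-hand side of \eqref{eq:poly:gap}.

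To close, I would emphasize that $\Delta_l$ and $\Delta_u$ are pointwise quantities attached to the \emph{same} triple $(X,\nbeta,\sigma^2)$; since \eqref{eq:opt:bss:ub:fd} holds for an arbitrary fixed design it in particular applies to the hard instance furnished by Lemma~\ref{thm:poly:main}, so both quantities in \eqref{eq:poly:gap} are evaluated at one common instance and the comparison is legitimate. The main thing to be careful with --- and it is bookkeeping rather than a real obstacle --- is to verify that the instance of Lemma~\ref{thm:poly:main} can be taken so that it simultaneously (i) has restricted eigenvalue within $2^{-G(n,\dd,\sps)}$ of the target $\gamma$, (ii) respects the admissible ranges for $\dd$, $n$ and $\gamma$ in the lemma, and (iii) does not make $\Delta_l$ and $\Delta_u$ differ by more than the logarithmic-in-$\sps$ slack already tolerated in our notion of optimality, so that the displayed ratio is a genuine multiplicative gap of order $1/\gamma^2$ and is not absorbed into that slack; all three are inherited from the explicit construction behind Lemma~\ref{thm:poly:main}.
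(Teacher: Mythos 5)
Your proposal is correct and follows essentially the same route as the paper: the paper also pairs the fixed-design \bss{} upper bound \eqref{eq:opt:bss:ub:fd} (whose combinatorial term is discharged by Theorem~\ref{thm:opt:bss:lb3}) with Lemma~\ref{thm:poly:main}, bounding the denominator of \eqref{eq:poly:samcom} by $\sps\Delta_u$ so the factors of $\sps$ cancel. Your closing caveat about $\Delta_l\asymp\Delta_u$ matches the remark the paper places after the theorem.
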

Theorem~\ref{thm:poly:gap} is implied by Lemma~\ref{thm:poly:main}. 
Recall that $\gamma(X)$ is the RE of the design matrix (cf.~Definition~\ref{defn:poly:RE}). Assuming $\Delta_l\asymp \Delta_u$, we observe the unavoidable gap between optimal sample complexity (achieved by \bss{}) and any polynomial-efficient estimator by a (squared) factor of restricted eigenvalue.
This gap consolidates the optimal position of \bss{}, especially in the general design setting. In other word, there is no polynomial-efficient substitute for \bss{} that can attain the same performance.

The lower bound in~\eqref{eq:poly:gap} is also suggestive in its connection with existing results for polynomial-efficient estimators based on $\ell_1$-regularization.
For example, many such methods estimate the support by applying various thresholding techniques to a Lasso-based estimator of $\nbeta$, e.g. \citet{meinshausen2009lasso} and \citet{ndaoud2020optimal}. These methods rely on the consistency of estimating $\nbeta$, and thus typically have dependence on the restricted eigenvalue  \citep{raskutti2010restricted}. However, these results also impose additional assumptions such as incoherent designs. Under such stronger assumptions, it is possible that~\eqref{eq:poly:gap} reflects the optimal sample complexity for any polynomial-efficient support estimator.

\begin{remark}
    The presumption of $\Delta_l\asymp \Delta_u$ in the comparison above requires each covariate in the true support contributes same order effect to $Y$, via the interplay of conditional variances and corresponding entries in $\nbeta_{\trusupp}$. 
    The equi-correlation covariance $\Sigma_\mineig$ for the lower bound construction in Theorem~\ref{thm:opt:bss:lb1} is one example.
    To exactly verify this is nontrivial since the existence claim for the $\nbeta$ vector in Theorem~1 of \citet{zhang2014lower} does not give an explicit construction.
    That being said, it remains reasonable to expect $\Delta_l\asymp \Delta_u$ to hold, given the symmetric nature of the construction of the hard instance for the design matrix $X$.
\end{remark}

\begin{remark}
    One recent development proposes a polynomial-time solution to \bss{} \citep{zhu2020polynomial}, however, like most efficient support estimators, this approach requires additional conditions on the covariance. Specifically, it imposes Sparse restricted condition (SRC), which is closely related to Restricted isometry property (RIP) \citep{candes2005decoding} and requires for any $|T|\le 2\sps$, and $u\ne 0$, $c_- \le \|X_T u\|^2 / (n\|u\|^2) \le c_+$
    for some constants $c_-$ and $c_+$. This effectively demands any submatrix of the covariance is close to being orthonormal, and is easily violated under general designs. 
    For instance, consider the equi-correlation covariance $\Sigma_\mineig$ used in the lower bound construction of Theorem~\ref{thm:opt:bss:lb1}. For any $T$ with $|T|=2\sps$, $u=\mathbf{1}_{2\sps}$, we have as $\sps\to \infty$, $u\T \Sigma_{w,TT} u / \|u\|^2 = (2\sps-1)(1-\mineig) + 1 \to \infty$ diverges and thus violates SRC in expectation.
\end{remark}

\section{Conclusion}
In this paper, we studied the variable selection problem in the prototypical Gaussian linear model. We validate the folklore claim that the classic combinatorial algorithm \bss{} is minimax optimal for this problem, providing the optimal sample complexity with exact dependence on the design covariance and other relevant problem parameters. 
Additionally, we have extended the optimality of \bss{} to the setting where the exact sparsity level is unknown by analyzing an information criterion-type estimator, while whether the adaptivity to the unknown sparsity can be achieved remains an important future direction.

To further affirm the optimal role of \bss{}, we provide a performance lower bound for any polynomial-efficient estimator. The lower bound reveals a gap to the optimality by a factor of restricted eigenvalue of the design matrix. The lower bound is based on a reduction to prediction risk.
This gap gives a negative answer to the question of whether the optimality can be achieved computationally efficiently. This result also draws connection to many $\ell_1$-based methods, suggesting the potential of them to achieve the polynomial-efficient lower bound, which is left for future work as well.

\appendix

\section{Proof of Lemma~\ref{lem:opt:bss:ub:generic}}\label{app:opt:bss:ub}
\begin{proof}
It is easy to see that the estimator succeeds when $\|\Pi_{\trusupp}^\perp Y\|^2$ is the smallest, i.e.
\begin{align*}
    \prob(\estsupp^{\bss{}} \ne \trusupp)  & = \prob\bigg[ \bigcup_{T\in\suppspace\setminus\{\trusupp\}}\bigg\{\|\Pi_T^\perp Y\|^2 - \|\Pi_{\trusupp}^\perp Y\|^2< 0 \bigg\}\bigg] \\
    & \le \sum_{\ell=1}^\sps\sum_{\substack{T\in\suppspace\setminus\{\trusupp\}\\ |\suppspace\setminus T|=\ell}}\prob\bigg[ \|\Pi_T^\perp Y\|^2 - \|\Pi_{\trusupp}^\perp Y\|^2< 0\bigg] \,.
\end{align*}
Now we introduce a deviation bound for the error probability, whose proof can be found below. For any $S,T\in\suppspace$, we define the signal to distinguish $S$ and $T$ to be
\begin{align*}
    \signalone(S,T) := \nbeta\T_{S\setminus T} \Sigma_{S\setminus T\given T} \nbeta_{S\setminus T}/ \sigma^2 \,.
\end{align*}
Then if $|\trusupp\setminus T| = \ell $, it is easy to see that $\signalone(\trusupp,T) \ge \ell \signalone$.
\begin{lemma}\label{lem:opt:ub:bss:gap}
If $n-\sps \ge \frac{32}{\signalone}$, then for any $T\in\suppspace\setminus \trusupp$,
\begin{align*}
    \prob\bigg[ \|\Pi_T^\perp Y\|^2 - \|\Pi_{\trusupp}^\perp Y\|^2< 0\bigg] \le 5\exp\bigg(-(n-\sps) \frac{\min(\signalone(\trusupp,T),1)}{1024}\bigg) \,.
\end{align*}
\end{lemma}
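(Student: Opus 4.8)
The plan is to reduce the event $\{\|\Pi_T^\perp Y\|^2 - \|\Pi_{\trusupp}^\perp Y\|^2 < 0\}$ to a comparison between two quadratic forms in Gaussians, then control each by standard chi-square / noncentral chi-square concentration. First I would set $U = \trusupp\cup T$ and work on the column span of $X_U$; write $Y = X_{\trusupp}\nbeta_{\trusupp} + \neps$, and decompose $\|\Pi_T^\perp Y\|^2 - \|\Pi_{\trusupp}^\perp Y\|^2 = \|\Pi_{\trusupp}^\perp Y\|^2$ minus... more precisely I would use the identity $\|\Pi_T^\perp Y\|^2 = \|\Pi_{\trusupp}^\perp Y\|^2 + \big(\|\Pi_T^\perp Y\|^2 - \|\Pi_{\trusupp}^\perp Y\|^2\big)$ and note that since $\Pi_{\trusupp}^\perp Y = \Pi_{\trusupp}^\perp \neps$, the second term equals $\|\Pi_T^\perp Y\|^2 - \|\Pi_{\trusupp}^\perp \neps\|^2$. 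Conditioning on $X$, $\Pi_{\trusupp}^\perp\neps$ is Gaussian with $\|\Pi_{\trusupp}^\perp\neps\|^2 \sim \sigma^2\chi^2_{n-\sps}$, while $\Pi_T^\perp Y = \Pi_T^\perp X_{\trusupp\setminus T}\nbeta_{\trusupp\setminus T} + \Pi_T^\perp\neps$ is Gaussian with mean $\mu := \Pi_T^\perp X_{\trusupp\setminus T}\nbeta_{\trusupp\setminus T}$, so $\|\Pi_T^\perp Y\|^2$ is a noncentral chi-square (on $n - |T| \ge n - \sps$ degrees of freedom, up to the rank of $\Pi_T^\perp$ restricted appropriately) with noncentrality $\|\mu\|^2/\sigma^2$.

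The key quantitative step is to show that, with high probability over $X$, the random noncentrality $\|\mu\|^2$ is at least a constant fraction of its expectation $(n-|T|)\cdot\sigma^2\signalone(\trusupp,T)$ — here I would use that, conditional on $X_T$, the rows of $X_{\trusupp\setminus T}$ have conditional covariance $\Sigma_{\trusupp\setminus T\mid T}$, so $\E[\|\mu\|^2 \mid X_T] = (n - |T|)\,\nbeta_{\trusupp\setminus T}\T\Sigma_{\trusupp\setminus T\mid T}\nbeta_{\trusupp\setminus T} = (n-|T|)\sigma^2\signalone(\trusupp,T)$, and apply a Gaussian/Hanson–Wright-type lower tail bound (or Bernstein for quadratic forms) to say $\|\mu\|^2 \ge \tfrac12 (n-|T|)\sigma^2\signalone(\trusupp,T)$ except with probability $\exp(-c(n-|T|))$. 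Given this, on the good event I would write $\|\Pi_T^\perp Y\|^2 = \|\mu + \Pi_T^\perp\neps\|^2 \ge (1-\epsilon)\|\mu\|^2 - C_\epsilon\|\Pi_T^\perp\neps\|^2$ is the wrong direction — instead expand $\|\mu + \Pi_T^\perp\neps\|^2 = \|\mu\|^2 + 2\langle\mu,\Pi_T^\perp\neps\rangle + \|\Pi_T^\perp\neps\|^2$ and bound the cross term by $|\langle\mu,\Pi_T^\perp\neps\rangle| \le \tfrac14\|\mu\|^2 + \|\Pi_T^\perp\neps\|^2$-type inequalities, or directly use a lower tail for noncentral chi-square: $\|\Pi_T^\perp Y\|^2 \ge \tfrac12\|\mu\|^2$ with probability $1 - \exp(-c\|\mu\|^2/\sigma^2) - \exp(-c(n-|T|))$. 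Meanwhile $\|\Pi_{\trusupp}^\perp\neps\|^2 \le 2(n-\sps)\sigma^2$ with probability $1 - \exp(-c(n-\sps))$ by the upper chi-square tail. Combining, the bad event forces $\tfrac12\|\mu\|^2 < 2(n-\sps)\sigma^2$, i.e. $\tfrac14(n-|T|)\signalone(\trusupp,T) < 2(n-\sps)$; since $|T|\le\sps$ this needs $\signalone(\trusupp,T)\lesssim 1$, and in that regime the total failure probability is $5\exp(-c(n-\sps)\min(\signalone(\trusupp,T),1))$ after tracking constants to land on $1024$ and the prefactor $5$.

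The main obstacle I expect is the bookkeeping of constants through the several concentration inequalities so that the final exponent is exactly $(n-\sps)\min(\signalone(\trusupp,T),1)/1024$ with prefactor at most $5$: one must handle separately the case $\signalone(\trusupp,T)\ge 1$ (where the noncentrality dominates and the $\min$ caps at $1$) and $\signalone(\trusupp,T)<1$ (where the hypothesis $n-\sps\ge 32/\signalone$ is exactly what makes the mean separation $\tfrac14(n-|T|)\signalone$ beat $2(n-\sps)$), and combine the three or four tail events (lower tail of $\|\mu\|^2$, lower tail of noncentral $\chi^2$ given $\mu$, upper tail of $\|\Pi_{\trusupp}^\perp\neps\|^2$, and possibly the cross-term bound) without the constants blowing up. A secondary subtlety is that $\Pi_T^\perp$ and $\neps$ are independent given $X$ but $\mu$ and $\Pi_T^\perp\neps$ live in the same (random) subspace, so the noncentral chi-square statement must be made conditionally on $X$ with $n - \dim(\text{range}\,\Pi_T^\perp\text{ relevant part})$ degrees of freedom, and one checks this dimension is $\ge n - \sps$ uniformly; I would handle this by noting $\Pi_T^\perp$ has rank $n - |T| \ge n - \sps$ a.s.
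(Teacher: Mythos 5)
Your reduction is on the right track up to a point: the identification $\Pi_{\trusupp}^\perp Y=\Pi_{\trusupp}^\perp\neps$, the decomposition $\Pi_T^\perp Y=\mu+\Pi_T^\perp\neps$ with $\mu=\Pi_T^\perp X_{\trusupp\setminus T}\nbeta_{\trusupp\setminus T}$, and the observation that conditionally $\|\mu\|^2\sim\signalone(\trusupp,T)\sigma^2\chi^2_{n-|T|}$ (so that no separate Hanson--Wright step is needed) all match the paper's argument. The gap is in your combination step. You lower-bound $\|\Pi_T^\perp Y\|^2$ by $\tfrac12\|\mu\|^2$ and upper-bound $\|\Pi_{\trusupp}^\perp\neps\|^2$ by $2(n-\sps)\sigma^2$; since $|T|=\sps$ here, the resulting deterministic comparison $\tfrac14(n-\sps)\signalone(\trusupp,T)\sigma^2>2(n-\sps)\sigma^2$ only closes when $\signalone(\trusupp,T)\ge 8$. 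The entire regime $\signalone(\trusupp,T)<1$, which is where the $\min(\signalone(\trusupp,T),1)$ in the exponent actually bites, is deferred to ``tracking constants,'' but it cannot be recovered by the tools you list. Concretely, once you keep the central part $\|\Pi_T^\perp\neps\|^2$ of the noncentral $\chi^2$, you must control the event $\|\Pi_T^\perp\neps\|^2-\|\Pi_{\trusupp}^\perp\neps\|^2\le -c\|\mu\|^2\approx -c(n-\sps)\signalone(\trusupp,T)\sigma^2$. Bounding each of these two $\chi^2_{n-\sps}$ variables separately around its mean at deviation scale $t\asymp\signalone(\trusupp,T)<1$ puts you in the Gaussian regime of the $\chi^2$ tail and yields $\exp(-c(n-\sps)\signalone(\trusupp,T)^2)$, which is strictly weaker than the claimed $\exp(-(n-\sps)\signalone(\trusupp,T)/1024)$.

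The missing idea (Lemma~\ref{lem:opt:ub:bss:gap:1} in the paper) is the exact cancellation
\begin{align*}
\|\Pi_T^\perp\neps\|^2-\|\Pi_{\trusupp}^\perp\neps\|^2
=\|(\Pi_{\trusupp}-\Pi_{\trusupp\cap T})\neps\|^2-\|(\Pi_T-\Pi_{\trusupp\cap T})\neps\|^2\,,
\end{align*}
a difference of two $\chi^2_\ell$ variables with $\ell=|\trusupp\setminus T|$, \emph{not} two $\chi^2_{n-\sps}$ variables. Requiring a $\chi^2_\ell$ variable to deviate by $(n-\sps)\signalone(\trusupp,T)\sigma^2/8$ corresponds to $t=(n-\sps)\signalone(\trusupp,T)/(32\ell)\ge 1$ under the hypothesis $n-\sps\ge 32/\signalone$ (using $\signalone(\trusupp,T)\ge\ell\signalone$), so the tail is sub-exponential and gives $\exp(-(n-\sps)\signalone(\trusupp,T)/32)$ --- linear in the signal, as required. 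The remaining cross term $2\langle\mu,\neps\rangle$ is handled as you suggest (it is a difference of two $\chi^2_{n-\sps}$'s after polarization, contributing the $\min(\signalone(\trusupp,T),\sqrt{\signalone(\trusupp,T)})$ exponent), so that part of your plan is fine; it is specifically the noise-only difference where your per-term bounds fail and the rank-$\ell$ structure must be exploited.
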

Applying Lemma~\ref{lem:opt:ub:bss:gap}, we have
\begin{align*}
    \prob(\estsupp^{\bss{}} \ne \trusupp)  & \le \sum_{\ell=1}^\sps\sum_{\substack{T\in\suppspace\setminus\{\trusupp\} \\ |\trusupp\setminus T|=\ell}}\prob\bigg[ \|\Pi_T^\perp Y\|^2 - \|\Pi_{\trusupp}^\perp Y\|^2< 0\bigg] \\
    & \le  \sum_{\ell=1}^\sps\sum_{\substack{T\in\suppspace\setminus\{\trusupp\} \\ |\trusupp\setminus T|=\ell}} 5\exp\bigg(-(n-\sps) \frac{\min(\signalone(\trusupp,T),1)}{1024}\bigg)\\
    & \le \max_\ell \sps\times \binom{\sps}{\ell}\binom{\dd-\sps}{\ell}\times 5 \exp\bigg(-(n-\sps) \frac{\min(\ell \signalone,1)}{1024}\bigg) \,.
\end{align*}
Since $\sps\le \dd/2$, $\binom{\sps}{\ell} \le \binom{\dd-\sps}{\ell}$ and 
\begin{align*}
    \log 5\sps \le \max_\ell \log 5\binom{\sps}{\ell} \le \max_\ell 2\log \binom{\sps}{\ell}\,.
\end{align*}
Therefore, the error probability
\begin{align*}
    \prob(\estsupp^{\bss{}} \ne \trusupp)  & \le\max_\ell \exp\bigg(\log 5\sps + \log \binom{\sps}{\ell} + \log \binom{\dd-\sps}{\ell} -(n-\sps) \frac{\min(\ell \signalone,1)}{1024}\bigg)\\
    & \le\max_\ell \exp\bigg(4\log \binom{\dd-\sps}{\ell} -(n-\sps) \frac{\min(\ell \signalone,1)}{1024}\bigg)\,.
\end{align*}
Setting the RHS to be smaller than $\delta$ for all $\ell\in[\sps]$, we have desired sample complexity.
\end{proof}
\begin{proof}[Proof of Lemma~\ref{lem:opt:ub:bss:gap}]
\begin{align*}
    \prob\bigg( \|\Pi_T^\perp Y\|^2 - \|\Pi_{\trusupp}^\perp Y\|^2< 0 \bigg) & = \prob\bigg(\frac{\|\Pi_T^\perp Y\|^2 - \|\Pi_{\trusupp}^\perp Y\|^2}{(n-\sps)\sigma^2} < 0\bigg) \\
    & \le \prob\bigg(\frac{\|\Pi_T^\perp Y\|^2 - \|\Pi_{\trusupp}^\perp Y\|^2}{(n-\sps)\sigma^2} \le \signalone(\trusupp,T) /4 \bigg) \\
    & \le \prob\bigg(\frac{\big| \|\Pi_T^\perp Y\|^2 - \|\Pi_{T}^\perp \neps \|^2 \big|}{(n-\sps)\sigma^2} \le \signalone(\trusupp,T)/2\bigg)\\
    & \qquad + \prob\bigg(\frac{\big| \|\Pi_T^\perp \neps\|^2 - \|\Pi_{\trusupp}^\perp \neps\|^2 \big|}{(n-\sps)\sigma^2} \ge \signalone(\trusupp,T)/4\bigg)  \,.
\end{align*}
We bound these two terms separately using the lemmas below. 
\begin{lemma}\label{lem:opt:ub:bss:gap:1}
If $n-\sps\ge \frac{32}{\signalone}$, then
\begin{align*}
    \prob\bigg(\frac{\big| \|\Pi_T^\perp \neps\|^2 - \|\Pi_{\trusupp}^\perp \neps\|^2 \big|}{(n-\sps)\sigma^2} \ge \signalone(\trusupp,T) / 4\bigg)  \le 2\exp\bigg( - (n-\sps) \frac{\signalone(\trusupp,T)}{32}\bigg) \,.
\end{align*}
\end{lemma}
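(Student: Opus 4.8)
The plan is to reduce the left-hand side to a difference of two (dependent) $\chi^2$ random variables, and then bound each one by a Chernoff tail estimate. Set $A:=\trusupp\cap T$ and $\ell:=|\trusupp\setminus T|$; since $|T|=|\trusupp|=\sps$ and $T\neq\trusupp$ we have $\ell=|T\setminus\trusupp|\ge 1$ and $|A|=\sps-\ell$. Using $\|\Pi_S^\perp\neps\|^2=\|\neps\|^2-\|\Pi_S\neps\|^2$, the quantity of interest equals $\|\Pi_{\trusupp}\neps\|^2-\|\Pi_T\neps\|^2$. Because $\mathrm{col}(X_A)\subseteq\mathrm{col}(X_{\trusupp})$ and $\mathrm{col}(X_A)\subseteq\mathrm{col}(X_T)$, both $P_1:=\Pi_{\trusupp}-\Pi_A$ and $P_2:=\Pi_T-\Pi_A$ are orthogonal projections with $\Pi_AP_i=0$ and $\mathrm{rank}(P_i)=\sps-|A|=\ell$ (here $X_{\trusupp}$ and $X_T$ have full column rank almost surely once $n\ge\sps$, since the rows of $X$ are drawn from a nondegenerate Gaussian). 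Hence $\|\Pi_{\trusupp}\neps\|^2=\|\Pi_A\neps\|^2+\|P_1\neps\|^2$ and likewise for $T$, giving
\begin{align*}
\|\Pi_T^\perp\neps\|^2-\|\Pi_{\trusupp}^\perp\neps\|^2=\|P_1\neps\|^2-\|P_2\neps\|^2 .
\end{align*}
Since $\neps\sim\mathcal N(\mathbf 0_n,\sigma^2I_n)$ and each $P_i$ is a rank-$\ell$ orthogonal projection, $\|P_i\neps\|^2/\sigma^2\sim\chi^2_\ell$.

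Next, write $u:=(n-\sps)\sigma^2\,\signalone(\trusupp,T)/4$, so the target probability is $\prob(|\|P_1\neps\|^2-\|P_2\neps\|^2|\ge u)$. As $\|P_1\neps\|^2$ and $\|P_2\neps\|^2$ are nonnegative, the event $\{|\|P_1\neps\|^2-\|P_2\neps\|^2|\ge u\}$ is contained in $\{\|P_1\neps\|^2\ge u\}\cup\{\|P_2\neps\|^2\ge u\}$, so a union bound reduces the task to bounding $\prob(\|P_i\neps\|^2\ge u)=\prob(\chi^2_\ell\ge u/\sigma^2)$. I would then plug in the hypotheses: since $\signalone(\trusupp,T)=\nbeta_{\trusupp\setminus T}^\top\Sigma_{\trusupp\setminus T\given T}\nbeta_{\trusupp\setminus T}/\sigma^2\ge\ell\signalone$ and $n-\sps\ge 32/\signalone$, we obtain $x:=u/\sigma^2=(n-\sps)\signalone(\trusupp,T)/4\ge(n-\sps)\ell\signalone/4\ge 8\ell$.

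Finally, the moment generating function bound $\prob(\chi^2_\ell\ge x)\le(1-2\lambda)^{-\ell/2}e^{-\lambda x}$ with $\lambda=1/4$ gives $\prob(\chi^2_\ell\ge x)\le 2^{\ell/2}e^{-x/4}$, and $x\ge 8\ell\ge(4\ln 2)\ell$ ensures $2^{\ell/2}\le e^{x/8}$, whence $\prob(\|P_i\neps\|^2\ge u)\le e^{-x/8}=\exp\!\big(-(n-\sps)\signalone(\trusupp,T)/32\big)$; summing the two terms yields the claimed bound. The routine part is this constant bookkeeping—lining up the factor $1/4$ in $u$, the $32/\signalone$ sample-size condition, and the $\chi^2$ tail so that exactly $1/32$ appears in the exponent. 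The only genuinely conceptual point, and the main thing to get right, is that $\|P_1\neps\|^2$ and $\|P_2\neps\|^2$ are \emph{not} independent (the subspaces overlap through $X_{\trusupp\cap T}$), which is why one must bound $|\|P_1\neps\|^2-\|P_2\neps\|^2|$ via the single-sided events $\{\|P_i\neps\|^2\ge u\}$ rather than splitting $u$ between the two; the minor technical point to dispatch is the almost-sure full-column-rank of $X_{\trusupp}$ and $X_T$.
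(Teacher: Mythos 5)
Your proposal is correct and follows essentially the same route as the paper: the identical decomposition $\|\Pi_T^\perp\neps\|^2-\|\Pi_{\trusupp}^\perp\neps\|^2=\|(\Pi_{\trusupp}-\Pi_{\trusupp\cap T})\neps\|^2-\|(\Pi_{T}-\Pi_{\trusupp\cap T})\neps\|^2$ into two dependent rank-$\ell$ chi-square variables, followed by a union bound and a chi-square tail estimate, with the sample-size condition $n-\sps\ge 32/\signalone$ and $\signalone(\trusupp,T)\ge\ell\signalone$ making the deviation large relative to $\ell$. The only (immaterial) difference is bookkeeping: the paper contains the event in $\{|Z-\ell|\ge u/2\}\cup\{|\widetilde Z-\ell|\ge u/2\}$ and invokes its two-sided concentration Lemma~\ref{lem:chisq}, whereas you use nonnegativity to contain it in $\{Z\ge u\}\cup\{\widetilde Z\ge u\}$ and apply a direct Chernoff upper-tail bound.
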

\begin{lemma}\label{lem:opt:ub:bss:gap:2}
\begin{align*}
    \prob\bigg(\frac{\big| \|\Pi_T^\perp Y\|^2 - \|\Pi_{T}^\perp \neps \|^2 \big|}{(n-\sps)\sigma^2} \le \signalone(\trusupp,T)/2\bigg) & \le  2\exp\bigg(-(n-\sps)\frac{\min(\signalone(\trusupp,T),\sqrt{\signalone(\trusupp,T)})}{1024}\bigg)\\
    &+ \exp\bigg(-(n-\sps) / 256\bigg) \,.
\end{align*}
\end{lemma}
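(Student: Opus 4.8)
The plan is to expand $\|\Pi_T^\perp Y\|^2$ around $\|\Pi_T^\perp\neps\|^2$, which produces a (random) signal term plus a Gaussian cross term, and then to control each piece by concentration. First I would use $Y = X_\trusupp\nbeta_\trusupp + \neps$ together with $\Pi_T^\perp X_{\trusupp\cap T} = 0$ (the columns indexed by $\trusupp\cap T$ lie in the column span of $X_T$) to write $\Pi_T^\perp Y = \mu + \Pi_T^\perp\neps$ with $\mu := \Pi_T^\perp X_{\trusupp\setminus T}\nbeta_{\trusupp\setminus T}$, so that $\|\Pi_T^\perp Y\|^2 - \|\Pi_T^\perp\neps\|^2 = \|\mu\|^2 + 2\langle\mu,\neps\rangle$. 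Hence the event in the statement is contained in $\{\|\mu\|^2 \le \tfrac{7}{8}\signalone(\trusupp,T)(n-\sps)\sigma^2\}\cup\{\langle\mu,\neps\rangle \le -\tfrac{3}{16}\signalone(\trusupp,T)(n-\sps)\sigma^2\}$, since outside this union we get $\|\mu\|^2 + 2\langle\mu,\neps\rangle > \tfrac{1}{2}\signalone(\trusupp,T)(n-\sps)\sigma^2$. It then suffices to bound each of these two probabilities, which will produce the $\exp(-(n-\sps)/256)$ and the $2\exp(-(n-\sps)\min(\signalone(\trusupp,T),\sqrt{\signalone(\trusupp,T)})/1024)$ terms respectively; the thresholds $\tfrac{7}{8}$ and $\tfrac{3}{16}$ are chosen to leave exactly the required margin.

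The next step pins down the joint law of $(\|\mu\|^2,\langle\mu,\neps\rangle)$. Conditioning on $X_T$ and letting $W := X_{\trusupp\setminus T} - X_T\Sigma_{TT}^{-1}\Sigma_{T,\trusupp\setminus T}$ be the regression residual of $X_{\trusupp\setminus T}$ on $X_T$, the rows of $W$ are i.i.d.\ $\mathcal{N}(\mathbf{0},\Sigma_{\trusupp\setminus T\given T})$ and $W$ is independent of $(X_T,\neps)$; since $\Pi_T^\perp X_T = 0$ this yields $\mu = \Pi_T^\perp v$ with $v := W\nbeta_{\trusupp\setminus T}\sim\mathcal{N}(\mathbf{0}_n,\sigma^2\signalone(\trusupp,T)I_n)$ independent of $(X_T,\neps)$, using $\mathrm{Var}(v_i) = \nbeta_{\trusupp\setminus T}\T\Sigma_{\trusupp\setminus T\given T}\nbeta_{\trusupp\setminus T} = \sigma^2\signalone(\trusupp,T)$. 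Writing $\Pi_T^\perp = UU\T$ for $U\in\mathbb{R}^{n\times(n-\sps)}$ with orthonormal columns, this gives $\|\mu\|^2 = \|U\T v\|^2 \overset{d}{=} \sigma^2\signalone(\trusupp,T)\,\chi^2_{n-\sps}$, while $\langle\mu,\neps\rangle = (U\T v)\T(U\T\neps)$ is a bilinear form in the independent Gaussian vectors $U\T v\sim\mathcal{N}(\mathbf{0},\sigma^2\signalone(\trusupp,T)I_{n-\sps})$ and $U\T\neps\sim\mathcal{N}(\mathbf{0},\sigma^2 I_{n-\sps})$.

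With these representations in hand, the first probability is a $\chi^2$ lower-tail bound, namely $\prob(\|\mu\|^2 \le \tfrac{7}{8}\signalone(\trusupp,T)(n-\sps)\sigma^2) = \prob(\chi^2_{n-\sps}\le\tfrac{7}{8}(n-\sps)) \le \exp(-(n-\sps)/256)$. The cross term is the crux: I would use the identity $\langle\mu,\neps\rangle \overset{d}{=}\sigma^2\sqrt{\signalone(\trusupp,T)}\,\sum_{j=1}^{n-\sps}g_jh_j$ with $g,h\sim\mathcal{N}(\mathbf{0},I_{n-\sps})$ independent, together with the polarization identity $\sum_j g_jh_j = \tfrac{1}{4}(\|g+h\|^2-\|g-h\|^2)$, so that standard $\chi^2$ tails for $\|g+h\|^2,\|g-h\|^2\sim 2\chi^2_{n-\sps}$ give a Bernstein-type bound $\prob(\sum_j g_jh_j\le -t)\le 2\exp(-c\min(t^2/(n-\sps),\,t))$. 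Choosing $t\asymp(n-\sps)\sqrt{\signalone(\trusupp,T)}$ makes $\min(t^2/(n-\sps),\,t)\asymp(n-\sps)\min(\signalone(\trusupp,T),\sqrt{\signalone(\trusupp,T)})$, which is precisely where the two-regime exponent comes from: a Gaussian tail when $\signalone(\trusupp,T)$ is small, an exponential tail when it is large. A union bound over the two events then yields the stated inequality.

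The main obstacle is controlling the cross term $\langle\mu,\neps\rangle$: it is a bilinear form in two independent Gaussian vectors precomposed with the random projection $\Pi_T^\perp$, so one must (i) strip away the randomness of $\Pi_T^\perp$ by conditioning on $X_T$ and passing to the orthonormal basis $U$, crucially using the independence of $v$ and $\neps$ (which rests on the conditional‑Gaussian decomposition of $X_{\trusupp\setminus T}$ given $X_T$ and on $X\indep\neps$), and (ii) obtain a tail bound whose exponent scales like $(n-\sps)\signalone(\trusupp,T)$ for small signal but only like $(n-\sps)\sqrt{\signalone(\trusupp,T)}$ for large signal. Carrying the absolute constants through the $\chi^2$ bounds so that the exponents land at $1024$ and $256$, and matching them to the split thresholds chosen in the first step, is routine bookkeeping that nonetheless requires some care; none of it is conceptually hard once the bilinear-form representation is in place.
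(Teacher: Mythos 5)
Your proposal is correct and follows essentially the same route as the paper: the same conditional-Gaussian decomposition of $X_{\trusupp\setminus T}$ given $X_T$ reducing $\Pi_T^\perp Y-\Pi_T^\perp\neps$ to a $\chi^2_{n-\sps}$ signal term plus a Gaussian bilinear cross term, the same event split, and the same polarization identity converting the cross term into a difference of two $\chi^2_{n-\sps}$ variables handled by the Laurent--Massart tails. The only differences are the split thresholds ($7/8$ and $3/16$ versus the paper's $3/4$ and $1/4$), which is immaterial bookkeeping.
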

Combining Lemma~\ref{lem:opt:ub:bss:gap:1} and \ref{lem:opt:ub:bss:gap:2}, we complete the proof.
\end{proof}
\begin{proof}[Proof of Lemma~\ref{lem:opt:ub:bss:gap:1}]
\begin{align*}
    \|\Pi_T^\perp \neps\|^2 /\sigma^2- \|\Pi_{\trusupp}^\perp\neps\|^2/\sigma^2 &= \neps\T \bigg( (I_n - X_T(X_T\T X_T)^{-1} X_T) - (I_n - X_{\trusupp}(X_{\trusupp}\T X_{\trusupp})^{-1} X_{\trusupp})\bigg)\neps/\sigma^2 \\
    & = \neps\T \bigg(X_{\trusupp}(X_{\trusupp}\T X_{\trusupp})^{-1} X_{\trusupp} - X_T(X_T\T X_T)^{-1} X_T) \bigg)\neps /\sigma^2 \\
    & = \|(\Pi_{\trusupp}-\Pi_{\trusupp\cap T})\neps\|^2/\sigma^2 - \|(\Pi_{T} - \Pi_{\trusupp\cap T})\neps\|^2 /\sigma^2 \\
    & = Z - \widetilde{Z}\,,
\end{align*}
where $Z,\widetilde{Z}\sim \chi^2_\ell$ given $X_T$ and $X_{\trusupp}$ because both $\Pi_{\trusupp}-\Pi_{\trusupp\cap T} \AND \Pi_{T}-\Pi_{\trusupp\cap T}$ are projection matrix with trace equal to $|\trusupp\setminus T|=|T\setminus \trusupp|=\ell$. Therefore, 
\begin{align*}
    \prob\bigg(\frac{\big| \|\Pi_T^\perp \neps\|^2 - \|\Pi_{\trusupp}^\perp \neps\|^2 \big|}{(n-\sps)\sigma^2} \ge \signalone(\trusupp,T) / 4\bigg) & = \prob\bigg(\frac{\big| Z - \widetilde{Z} \big|}{(n-\sps)} \ge \signalone(\trusupp,T) / 4\bigg) \\
    & \le \prob\bigg(\frac{|Z-\ell|}{\ell}\ge \frac{(n-\sps)\signalone(\trusupp,T)}{8\ell}\bigg) \\
    & \qquad + \prob\bigg(\frac{|\widetilde{Z}-\ell|}{\ell}\ge \frac{(n-\sps)\signalone(\trusupp,T)}{8\ell}\bigg)\\
    & = 2\prob\bigg(\frac{|Z-\ell|}{\ell}\ge \frac{(n-\sps)\signalone(\trusupp,T)}{8\ell}\bigg) \\
    & \le 2\exp\bigg(-(n-\sps)\frac{\signalone(\trusupp,T)}{32}\bigg)\,.
\end{align*}
We apply Lemma~\ref{lem:chisq} for the last inequality since $n-\sps\ge \frac{32}{\signalone} =\frac{32\ell}{\ell\signalone} \ge \frac{32}{\signalone(\trusupp,T)}$.
\end{proof}
\begin{proof}[Proof of Lemma~\ref{lem:opt:ub:bss:gap:2}]
We can decompose the Gaussian variables
\begin{align*}
    X_{\trusupp\setminus T} = X_T(\Sigma_{TT})^{-1}\Sigma_{T(\trusupp\setminus T)} + E_{\trusupp\setminus T}\,,
\end{align*}
where each row of $E_{\trusupp\setminus T}$ is i.i.d. from $\mathcal{N}(0,\Sigma_{\trusupp\setminus T\given T})$ and $E_{\trusupp\setminus T} \indep X_T$. Thus
\begin{align*}
    \Pi_T^\perp Y & = \Pi_T^\perp (X_{\trusupp}\nbeta_{\trusupp} + \neps) \\
    & = \Pi_T^\perp(X_{\trusupp\cap T} \nbeta_{\trusupp\cap T} + X_{\trusupp\setminus T}\nbeta_{\trusupp\setminus T} + \neps)\\
    & = \Pi_T^\perp (X_{\trusupp\setminus T}\nbeta_{\trusupp\setminus T} + \neps) \\
    & = \Pi_T^\perp \bigg((X_T(\Sigma_{TT})^{-1}\Sigma_{T(\trusupp\setminus T)} + E_{\trusupp\setminus T})\nbeta_{\trusupp\setminus T} + \neps\bigg) \\
    & = \Pi_T^\perp (E_{\trusupp\setminus T}\nbeta_{\trusupp\setminus T} + \neps)\,,
\end{align*}
where $E_{\trusupp\setminus T}\nbeta_{\trusupp\setminus T}$ is a vector and each entry is i.i.d. from $\mathcal{N}(0,\signalone(\trusupp,T)\sigma^2)$. Therefore,
\begin{align*}
    \|\Pi_T^\perp Y\|^2 - \|\Pi_T^\perp \neps\|^2 & = \|\Pi_T^\perp E_{\trusupp\setminus T}\nbeta_{\trusupp\setminus T}\|^2 + 2\langle \Pi_T^\perp E_{\trusupp\setminus T}\nbeta_{\trusupp\setminus T}, \neps \rangle \\
    &  = \sigma^2\signalone(\trusupp,T)\|\Pi_T^\perp U\|^2 + 2\sigma^2 \sqrt{\signalone(\trusupp,T)} \langle \Pi_T^\perp U, \Pi_T^\perp U_\neps\rangle\,,
\end{align*}
where $ U,U_\neps\sim\mathcal{N}(0,I_n)$ and $U\indep U_\neps$. Then
\begin{align*}
    \prob\bigg(\frac{\big| \|\Pi_T^\perp Y\|^2 - \|\Pi_{T}^\perp \neps \|^2 \big|}{(n-\sps)\sigma^2} \le \signalone(\trusupp,T)/2\bigg) & \le \prob\bigg(\frac{\sigma^2\signalone(\trusupp,T)\|\Pi_T^\perp U\|^2}{\sigma^2(n-\sps)}\le \frac{3}{4}\signalone(\trusupp,T)\bigg) \\
    & + \prob\bigg(\frac{2\sigma^2\sqrt{\signalone(\trusupp,T)}\langle \Pi_T^\perp U,\Pi_T^\perp U_\neps\rangle}{\sigma^2(n-\sps)} \le -\frac{1}{4}\signalone(\trusupp,T)\bigg)\,.
\end{align*}
For the first term, apply Lemma~\ref{lem:chisq},
\begin{align*}
    \prob\bigg(\frac{\sigma^2\signalone(\trusupp,T)\|\Pi_T^\perp U\|^2}{\sigma^2(n-\sps)}\le \frac{3}{4}\signalone(\trusupp,T)\bigg)  & = \prob\bigg(\frac{\chi^2_{n-\sps}}{n-\sps} \le \frac{3}{4}\bigg)\\
    & = \prob\bigg(\frac{\chi^2_{n-\sps}}{n-\sps}-1 \le -\frac{1}{4}\bigg)\\
    & \le \exp(-(n-\sps)/256) \,.
\end{align*}
For the second term, since
\begin{align*}
    2\langle \Pi_T^\perp U,\Pi_T^\perp U_\neps\rangle & = \frac{1}{2}\bigg( \|\Pi_T^\perp(U+U_\neps)\|^2 - \|\Pi_T^\perp(U-U_\neps)\|^2\bigg) \\
    & = \|\Pi_T^\perp\frac{U+U_\neps}{\sqrt{2}}\|^2 - \|\Pi_T^\perp\frac{U-U_\neps}{\sqrt{2}}\|^2 \\
    & = W-\widetilde{W} \,,
\end{align*}
where $W,\widetilde{W}\sim \chi^2_{n-\sps}$. Therefore, apply Lemma~\ref{lem:chisq},
\begin{align*}
    \prob\bigg(\frac{2\sigma^2\sqrt{\signalone(\trusupp,T)}\langle \Pi_T^\perp U, \Pi_T^\perp U_\neps\rangle}{\sigma^2(n-\sps)} &\le -\frac{1}{4}\signalone(\trusupp,T)\bigg) \\
    & = \prob\bigg(\frac{W-\widetilde{W}}{n-\sps} \le -\frac{\sqrt{\signalone(\trusupp,T)}}{4}\bigg) \\
    & \le \prob\bigg(\frac{|W-(n-\sps)|}{n-\sps} \ge \frac{\sqrt{\signalone(\trusupp,T)}}{8}\bigg) \\
    & \qquad +\prob\bigg(\frac{|\widetilde{W}-(n-\sps)|}{n-\sps} \ge \frac{\sqrt{\signalone(\trusupp,T)}}{8}\bigg) \\
    & \le 2\exp\bigg(-(n-\sps)\frac{\min(\signalone(\trusupp,T),\sqrt{\signalone(\trusupp,T)})}{1024}\bigg)\,.
\end{align*}
\end{proof}

\section{Proof of Theorem~\ref{thm:opt:bss:lb1}}\label{app:opt:bss:lb1}
\begin{proof}
Consider a covariance matrix of $X$:
\begin{align*}
    \Sigma = (1-\rho) I_\dd + \rho \mathbf{1}_\dd\mathbf{1}_\dd\T
\end{align*}
where $\rho = 1 - \mineig$. Then for any distinct $S,T\in\suppspace$ with $|S\setminus T| = r$, we can calculate the conditional covariance matrix
\begin{align*}
    \Sigma_{S\setminus T\given T} & = \Sigma_{(S\setminus T)(S\setminus T)} - \Sigma_{(S\setminus T)T}\Sigma^{-1}_{TT}\Sigma_{T(S\setminus T)} \\
    & = (1-\rho)I_r + \rho \mathbf{1}_r\mathbf{1}_r\T - \rho \mathbf{1}_r\mathbf{1}_\sps\T \times \big((1-\rho)I_\sps + \rho \mathbf{1}_\sps\mathbf{1}_\sps\T\big)^{-1} \times \rho \mathbf{1}_\sps\mathbf{1}_r\T \\
    & = (1-\rho)I_r + \rho \mathbf{1}_r\mathbf{1}_r\T - \rho \mathbf{1}_r\mathbf{1}_\sps\T \times \bigg(\frac{1}{1-\rho}\big(I_\sps - \frac{\rho}{1 - \rho + \sps\rho}\mathbf{1}_\sps\mathbf{1}_\sps\T\big)\bigg) \times \rho \mathbf{1}_\sps\mathbf{1}_r\T \\
    & = (1-\rho)\bigg(I_r + \frac{\rho}{1-\rho + \sps\rho} \mathbf{1}_r\mathbf{1}_r\T\bigg)\,,
\end{align*}
then the minimum eigenvalue is
\begin{align*}
    \lambda_{\min}(\Sigma_{S\setminus T\given T}) = \begin{cases}
    (1 - \rho) \times (1+\frac{\rho}{1-\rho+\sps\rho}) & r = 1 \\
    1-\rho & r\ge 2
    \end{cases} \,.
\end{align*}
Since $\Sigma_{S\setminus T\given T}$ is independent with the choice of $(S,T)$ but only depends on $|S\setminus T|$, this covariance matrix $\Sigma$ satisfies the requirement on $\Sigmaspace_{\dd,\sps}(\mineig)$:
\begin{align*}
    \min_{S\in\suppspace}\min_{T\in\suppspace\setminus S} \lambda_{\min}(\Sigma_{S\setminus T\given T}) = 1 - \rho = \mineig \,.
\end{align*}
Now we can construct $\sps$ many ensembles to establish $\sps$ lower bounds, while they will lead to only one in the end. For each of them, we fix the covariance matrix $\Sigma$ and coefficient vector $\nbeta = \betam\mathbf{1}_\dd$, construct the ensemble solely by varying support. For the $\ell$-th ensemble ($\ell=1,2,\cdots,\sps$), let
\begin{align*}
    \mathcal{S}'_\ell :=  \bigg\{S'\subseteq \{\sps,\sps+1,\cdots,\dd\}: |S'|=\ell \bigg\}\,.
\end{align*}
Thus $|\mathcal{S}'_\ell| = \binom{\dd-\sps}{\ell}$. We consider set of supports:
\begin{align*}
    \mathcal{S}_\ell :=  \bigg\{S : S = \{1,2,\cdots,\sps-\ell\}\cup S', S'\in\mathcal{S}'_\ell \bigg\}\,.
\end{align*}
Thus $|\mathcal{S}_\ell|=|\mathcal{S}'_\ell| = \binom{\dd-\sps}{\ell}$ and each element $S\in \mathcal{S}_\ell$ determines a model $Y = X_S\T \nbeta_S +\neps$. For any two supports $S,T\in\mathcal{S}_\ell$, write
\begin{align*}
    S & = \{1,2,\cdots,\sps-\ell\}\cup S' \\
    T & = \{1,2,\cdots,\sps-\ell\}\cup T' \,,
\end{align*}
with $S',T'\in\mathcal{S}'_\ell$. Now we try to calculate the KL divergence between two models specified by $S$ and $T$. We further denote $S'' = S'\setminus T'$, and $T''=T'\setminus S'$ with $|S''|=|T''|=r\le \ell$, and the models determined by $S$ and $T$ to be $P_S$ and $P_T$. Therefore,
\begin{align*}
    \mathbf{KL}(P_S\| P_T) & = \E_{P_S} \log \frac{P_S}{P_T} \\
    & = \E_{P_S} \log \frac{\exp\bigg(-(Y - X_S\T \nbeta_S)^2 / 2\sigma^2\bigg)}{\exp\bigg(-(Y - X_T\T \nbeta_T)^2/ 2\sigma^2\bigg)} \\
    & = \E_X\E_\neps \frac{1}{2\sigma^2}\bigg( (X_S\T\nbeta_S - X_T\T\nbeta_T + \neps)^2 - \neps^2 \bigg) \\
    & = \E_X  (X_S\T\nbeta_S - X_T\T\nbeta_T)^2 / 2\sigma^2\\ 
    & = \E_X (X_{S''}\T\nbeta_{S''} - X_{T''}\T\nbeta_{T''})^2 / 2\sigma^2 \\ 
    & = \mathbf{1}_r\T (\Sigma_s + \Sigma_t - \Sigma_{st} - \Sigma_{ts}) \mathbf{1}_r\times \frac{\betam^2}{2\sigma^2}\,,
\end{align*}
where $\Sigma_s:= \Sigma_{S''S''}, \Sigma_t := \Sigma_{T''T''}, \Sigma_{st} := \Sigma_{S''T''},\Sigma_{ts}:=\Sigma_{T''S''}$. Then
\begin{align*}
     \mathbf{1}_r\T(\Sigma_s + \Sigma_t - \Sigma_{st} - \Sigma_{ts}) \mathbf{1}_r & =  \mathbf{1}_r\T(\Sigma_s - \Sigma_{st}\Sigma_t^{-1}\Sigma_{ts}) \mathbf{1}_r +  \mathbf{1}_r\T(\Sigma_t - \Sigma_{ts}\Sigma_s^{-1}\Sigma_{st})  \mathbf{1}_r\\
    & \qquad +  \mathbf{1}_r\T ( \Sigma_{st}\Sigma_t^{-1}(\Sigma_{ts} - \Sigma_t )) \mathbf{1}_r + \mathbf{1}_r\T ( \Sigma_{ts}\Sigma_s^{-1}(\Sigma_{st} - \Sigma_s )) \mathbf{1}_r \,.
\end{align*}
The first two terms are the same, which are
\begin{align*}
    \mathbf{1}_r\T(\Sigma_s - \Sigma_{st}\Sigma_t^{-1}\Sigma_{ts}) \mathbf{1}_r &  = \mathbf{1}_r\T\bigg[ (1-\rho)\big(I_r + \frac{\rho}{1-\rho + r\rho}\mathbf{1}_r\mathbf{1}_r\T\big) \bigg]\mathbf{1}_r\\
    & = r(1-\rho)\times \bigg(1 + \frac{r\rho}{1-\rho+r\rho}\bigg)\\
    & \le 2r(1-\rho)\\
    & = 2r\mineig \le 2\ell \mineig \,.
\end{align*}
The last two terms are the same, which are
\begin{align*}
    \mathbf{1}_r\T ( \Sigma_{st}\Sigma_t^{-1}(\Sigma_{ts} - \Sigma_s )) \mathbf{1}_r & = \mathbf{1}_r\T \bigg[\rho\mathbf{1}_r\mathbf{1}_r\T \times \bigg(\frac{1}{1-\rho}\big(I_r - \frac{\rho}{1-\rho+r\rho}\mathbf{1}_r\mathbf{1}_r\T\big)\bigg)\times (\rho-1)I_r \bigg] \mathbf{1}_r \\
    & = \frac{-\rho(1-\rho)}{1-\rho+r\rho}\times r^2 \le 0 \,.
\end{align*}
Thus $\mathbf{KL}(P_S\|P_T) \le 2\ell\betam^2\mineig/\sigma^2$, which holds for any two $S,T\in \mathcal{S}_\ell$ and leads to a upper bound for any two models in the $\ell$-th ensemble.
Finally, for the $\ell$-th ensemble, we apply Fano's inequality Corollary~\ref{coro:fano} with KL divergence upper bound $2\ell\betam^2\mineig/\sigma^2$ and ensemble cardinality $\binom{\dd-\sps}{\ell}$, which completes the proof.
\end{proof}

\section{Proof of Theorem~\ref{thm:opt:bss:ub:unknown}}\label{app:opt:bss:ub:unknown}
\begin{proof}
    Recall that $|\trusupp|=\sps\le \ubsps$.
    Denote the event that $\trusupp$ beats an opponent $T$ with $|T|=j$:
    \begin{align*}
        \mathcal{E}(T,j) = \bigg\{\frac{\|\Pi^\perp_{\trusupp} Y\|^2}{n-\sps} + \frac{\sps}{4}\betam^2\mineig \le \frac{\|\Pi^\perp_T Y\|^2}{n-j} + \frac{j}{4}\betam^2\mineig\bigg\}\,,
    \end{align*}
    then the estimator succeeds with 
    \begin{align*}
        \prob(\estsupp^{\bssa{}}=\trusupp) = \prob\bigg( \bigcap_{j\in\{1,2,\ldots,\ubsps\}}\bigcap_{T\in\supps_{\dd,j}\setminus \{\trusupp\}} \mathcal{E}(T,j) \bigg) \,.
    \end{align*}
    Therefore, use $\ell$ for the distance from $j$ to $\sps$, i.e. $\ell=|j-\sps|$,
    \begin{align*}
        \prob(\estsupp^{\bssa{}}\ne \trusupp) & = \prob\bigg( \bigcup_{j\in[\ubsps]}\bigcup_{T\in\supps_{\dd,j}\setminus \{\trusupp\}} \overline{\mathcal{E}(T,j)} \bigg) \\
        & \le \sum_{T\in\suppspace\setminus\{\trusupp\}}\prob(\overline{\mathcal{E}(T,\sps)}) +  \sum_{j\ne \sps}\sum_{T\in\supps_{\dd,j}}\prob(\overline{\mathcal{E}(T,j)})\\
        & = \sum_{T\in\suppspace\setminus\{\trusupp\}}\prob(\overline{\mathcal{E}(T,\sps)}) \\
        & \qquad + \sum_{\ell=1}^{\sps}\sum_{T\in\supps_{\dd,\sps-\ell}}\prob(\overline{\mathcal{E}(T,\sps-\ell)})  + \sum_{\ell=1}^{\ubsps-\sps}\sum_{T\in\supps_{\dd,\sps+\ell}}\prob(\overline{\mathcal{E}(T,\sps+\ell)}) \,.
    \end{align*}
    The first term is controlled by Theorem~\ref{thm:opt:bss:ub}, now let's look at remaining two. Use $k:=|\trusupp\cap T|$ for the overlap between $T$ and $\trusupp$, define 
    \begin{align*}
        A_1 & := \sum_{\ell=1}^{\sps}\sum_{T\in\supps_{\dd,\sps-\ell}}\prob(\overline{\mathcal{E}(T,\sps-\ell)})  =\sum_{\ell=1}^{\sps} \sum_{k=0}^{\sps-\ell} \sum_{\overset{T\in\supps_{\dd,\sps-\ell}}{|T\cap \trusupp| = k} }\prob(\overline{\mathcal{E}(T,\sps-\ell)}) \\
        A_2 & :=\sum_{\ell=1}^{\ubsps-\sps}\sum_{T\in\supps_{\dd,\sps+\ell}}\prob(\overline{\mathcal{E}(T,\sps+\ell)}) 
        =  \sum_{\ell=1}^{\ubsps-\sps}\sum_{k=0}^{\sps}\sum_{\overset{T\in\supps_{\dd,\sps+\ell}}{|T\cap \trusupp| = k}}\prob(\overline{\mathcal{E}(T,\sps+\ell)}) \,.
    \end{align*}
    The cardinality of the innermost sums of $A_1 \AND A_2$ are
    \begin{align*}
        & \binom{\sps}{k}\binom{\dd-\sps}{\sps-k - \ell} = \binom{\sps}{\sps-k}\binom{\dd-\sps}{\sps-k - \ell} \le \binom{\dd-\sps}{\sps-k}^2 \\
        &\binom{\sps}{k}\binom{\dd-\sps}{\sps-k + \ell} = \binom{\sps}{\sps-k}\binom{\dd-\sps}{\sps-k + \ell} \le \binom{\dd-\sps}{\sps-k + \ell}^2
    \end{align*}
    respectively. The last inequality is because $\sps\le\ubsps\le \dd/2$, $\sps-k+\ell\le \ubsps$, then $\binom{\sps}{a}\le \binom{\dd-\sps}{a}$ for $a\le \sps$. Now we analyze the error probability respectively. Denote $\upsilon := \mineig\betam^2 / \sigma^2$ as a short hand.
    
    For $|T|=\sps-\ell$ and $|T\cap \trusupp|=k$, we have $|\trusupp\setminus T|=\sps-k \ge \ell$, $|T\setminus \trusupp| =\sps-\ell - k$.
    \begin{align*}
        \prob(\overline{\mathcal{E}(T,\sps-\ell)}) & = \prob\bigg( \frac{\|\Pi^\perp_{T}Y\|^2/\sigma^2}{n-(\sps-\ell)} - \frac{\|\Pi^\perp_{\trusupp}Y\|^2/\sigma^2}{n-\sps} \le \frac{\ell}{4}\mineig\betam^2/\sigma^2\bigg)   \\
        & \le \prob\bigg( \frac{\|\Pi^\perp_{T}Y\|^2/\sigma^2}{n-(\sps-\ell)} - \frac{\|\Pi^\perp_{\trusupp}Y\|^2/\sigma^2}{n-\sps} \le \frac{|\trusupp\setminus T|}{4}\upsilon\bigg) \\
        & \le \prob\bigg( \frac{\|\Pi^\perp_{T}Y\|^2/\sigma^2}{n-(\sps-\ell)} - \frac{\|\Pi^\perp_{\trusupp}Y\|^2/\sigma^2}{n-\sps} \le \frac{1}{4}\signalone(\trusupp,T)\bigg) \,.
    \end{align*}
    For $|T|=\sps+\ell$ and $|T\cap \trusupp|=k$, we have $|\trusupp\setminus T|=\sps-k$, $|T\setminus \trusupp| =\sps+\ell - k$.
    \begin{align*}
        \prob(\overline{\mathcal{E}(T,\sps+\ell)}) & = \prob\bigg( \frac{\|\Pi^\perp_{T}Y\|^2/\sigma^2}{n-(\sps+\ell)} - \frac{\|\Pi^\perp_{\trusupp}Y\|^2/\sigma^2}{n-\sps} \le -\frac{\ell}{4}\mineig\betam^2/\sigma^2\bigg)   \\
        & \le \prob\bigg( \frac{\|\Pi^\perp_{T}Y\|^2/\sigma^2}{n-(\sps+\ell)} - \frac{\|\Pi^\perp_{\trusupp}Y\|^2/\sigma^2}{n-\sps} \le \frac{1}{4}\Big(\signalone(\trusupp,T) - \ell \upsilon\Big)\bigg) \,.
    \end{align*}
    Now we introduce following lemma to control the error probability. The proof will be given in the sequel.
\begin{lemma}\label{lem:opt:bss:gap:unknown}
    If $n-\ubsps \ge \frac{96}{\betam^2\mineig / \sigma^2}$, then for any $T\in\suppspaceub\setminus \{\trusupp\}$, let $\ell':=\max\{|T|-|\trusupp|,0\}$,
    \begin{align*}
        &\prob\bigg[ \frac{\|\Pi^\perp_{T}Y\|^2}{(n-|T|)\sigma^2} - \frac{\|\Pi^\perp_{\trusupp}Y\|^2}{(n-\sps)\sigma^2} \le \frac{1}{4}\Big(\signalone(\trusupp,T) - \ell' \mineig\betam^2/\sigma^2 \Big)\bigg] \\
        \le & 5\exp\bigg(-(n-\ubsps)\frac{\min\Big((|\trusupp\setminus T|+\ell')\mineig\betam^2/\sigma^2,1\Big)}{9216} + \frac{|T\setminus \trusupp|}{4}\bigg) \,.
    \end{align*}
    \end{lemma}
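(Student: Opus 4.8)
The plan is to mimic the decomposition used in the known-sparsity case (Lemma~\ref{lem:opt:ub:bss:gap} and its sub-lemmas), but carefully track the extra term coming from the mismatch in model sizes. Write $|\trusupp| = \sps$, $|T| = j$, $\ell' = \max\{j-\sps,0\}$, and abbreviate $\Delta := \signalone(\trusupp,T)$ and $\upsilon := \mineig\betam^2/\sigma^2$. First I would reduce everything to noise-only and "signal-only" pieces exactly as in the proof of Lemma~\ref{lem:opt:ub:bss:gap:2}: using the regression decomposition $X_{\trusupp\setminus T} = X_T\Sigma_{TT}^{-1}\Sigma_{T(\trusupp\setminus T)} + E_{\trusupp\setminus T}$ with the rows of $E_{\trusupp\setminus T}$ i.i.d.\ $\mathcal N(0,\Sigma_{\trusupp\setminus T\given T})$ and independent of $X_T$, we get $\Pi_T^\perp Y = \Pi_T^\perp(E_{\trusupp\setminus T}\nbeta_{\trusupp\setminus T} + \neps)$, where each coordinate of $E_{\trusupp\setminus T}\nbeta_{\trusupp\setminus T}$ is i.i.d.\ $\mathcal N(0,\Delta\sigma^2)$. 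Hence conditionally on $X_T,X_{\trusupp}$, $\|\Pi_T^\perp Y\|^2/\sigma^2 = (1+\Delta)\,\chi^2_{n-j}$-type quantities and $\|\Pi_{\trusupp}^\perp Y\|^2/\sigma^2 = \chi^2_{n-\sps}$.

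The key new bookkeeping is with the denominators $n-j$ vs.\ $n-\sps$. I would split the target event using a triangle-type inequality into (i) a term measuring concentration of $\|\Pi_T^\perp Y\|^2$ around its conditional mean $(1+\Delta)(n-j)\sigma^2$, (ii) a term measuring concentration of $\|\Pi_{\trusupp}^\perp \neps\|^2$ around $(n-\sps)\sigma^2$, and (iii) a deterministic gap computation showing that the means already satisfy the reverse inequality with room to spare. The deterministic step is where $\ell'$ enters: one has $\frac{(1+\Delta)(n-j)}{n-j} - \frac{n-\sps}{n-\sps} = \Delta$, but the correct comparison after multiplying through is that the mean of $\frac{\|\Pi_T^\perp Y\|^2}{(n-j)\sigma^2} - \frac{\|\Pi_{\trusupp}^\perp Y\|^2}{(n-\sps)\sigma^2}$ equals $\Delta$, which exceeds $\frac14(\Delta - \ell'\upsilon)$ by at least $\frac34\Delta + \frac14\ell'\upsilon \gtrsim (|\trusupp\setminus T| + \ell')\upsilon$ using $\Delta \ge |\trusupp\setminus T|\upsilon$ (which follows from $\Sigma\in\Sigmaspace_\dd^{\ubsps}(\mineig)$ and $\nbeta\in\betaspace_\dd^{\ubsps}(\betam)$, exactly as in the proof of Theorem~\ref{thm:opt:bss:ub}). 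So the required deviation is of order $(|\trusupp\setminus T|+\ell')\upsilon$, which is what appears in the exponent.

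For the concentration pieces I would invoke the chi-square tail bound (Lemma~\ref{lem:chisq}) as in Lemmas~\ref{lem:opt:ub:bss:gap:1} and \ref{lem:opt:ub:bss:gap:2}, handling the cross term $\langle \Pi_T^\perp U, \Pi_T^\perp U_\neps\rangle$ via the polarization identity $2\langle a,b\rangle = \|a+b\|^2 - \|a-b\|^2$ to write it as a difference of (scaled) chi-squares. The condition $n - \ubsps \ge 96/\upsilon$ guarantees $n - |T| \ge n-\ubsps$ is large enough that all the "$\min(\Delta,\sqrt\Delta)$"-type bounds collapse to the linear-in-$\Delta$ regime after absorbing constants, producing $\exp(-(n-\ubsps)\min((|\trusupp\setminus T|+\ell')\upsilon,1)/9216)$. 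The extra additive $+|T\setminus\trusupp|/4$ in the exponent is the only genuinely new feature: it comes from the fact that $\|\Pi_T^\perp Y\|^2$ conditions on $X_T$ whose column space has dimension $|T| = |T\cap\trusupp| + |T\setminus\trusupp|$, so the number of degrees of freedom "lost" to projection, and hence a union bound / worst-case adjustment over the randomness of $X_T$ relative to the signal directions, contributes a factor $e^{|T\setminus\trusupp|/4}$; I would isolate this by conditioning and bounding a moment generating function of the relevant chi-square with $|T\setminus\trusupp|$ degrees of freedom. The main obstacle I anticipate is getting the deterministic gap step to be uniformly valid for both $j < \sps$ (underfitting, where $\ell' = 0$ and the penalty helps $\trusupp$) and $j > \sps$ (overfitting, where $\ell' = j - \sps > 0$ and one must show $\Delta$ still dominates $\frac14(\Delta - \ell'\upsilon)$ even though $\Delta$ itself could in principle be smaller than $\ell'\upsilon$ — but then the claimed probability bound is vacuous since the exponent's $\min(\cdot,1)$ argument could be negative, so one checks that in that regime $\frac14(\Delta-\ell'\upsilon) < 0$ and the event probability is controlled trivially or the bound holds because $|\trusupp\setminus T| + \ell' \ge 1$ always). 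Reconciling these cases cleanly, while keeping the constants consistent with the $9216 = 96 \times 96$ and $1024$ constants inherited from the known-sparsity lemmas, is the delicate part.
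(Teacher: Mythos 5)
Your high-level strategy (mimic the known-sparsity decomposition, track the penalty via $\ell'$, reduce to chi-square tails) is the right one, and your deterministic accounting --- the mean gap is $\Delta$, the threshold is $\tfrac14(\Delta-\ell'\upsilon)$, so the required deviation is $\tfrac34\Delta+\tfrac14\ell'\upsilon\gtrsim(|\trusupp\setminus T|+\ell')\upsilon$ --- matches the paper. But the concrete decomposition you propose in steps (i)--(iii) has a genuine gap: you cannot concentrate $\|\Pi_T^\perp Y\|^2$ and $\|\Pi_{\trusupp}^\perp\neps\|^2$ \emph{separately} around their conditional means at additive scale $\Delta+\ell'\upsilon$. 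Conditionally these are (scaled) $\chi^2_{n-j}$ and $\chi^2_{n-\sps}$ variables, so an additive deviation of order $\Delta+\ell'\upsilon$ is a \emph{relative} deviation of order $\Delta+\ell'\upsilon$, which is small when $\upsilon$ is small; Lemma~\ref{lem:chisq} then only gives $\exp(-n(\Delta+\ell'\upsilon)^2)$, quadratic in the signal, whereas the lemma claims (and the downstream sample complexity requires) an exponent linear in $(|\trusupp\setminus T|+\ell')\upsilon$. The cross term is immune to this (a relative deviation of $\sqrt{\Delta}$ squares to $\Delta$), and the pure-signal quadratic term only needs a constant relative deviation, but the noise comparison is not.

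The missing idea is that the noise contributions must never be centered individually. The paper writes
\begin{align*}
\frac{\|\Pi_T^\perp\neps\|^2}{n-j}-\frac{\|\Pi_{\trusupp}^\perp\neps\|^2}{n-\sps}
=\frac{\|(\Pi_{\trusupp}-\Pi_{\trusupp\cap T})\neps\|^2-\|(\Pi_T-\Pi_{\trusupp\cap T})\neps\|^2}{n-j}
+\frac{j-\sps}{n-j}\cdot\frac{\|\Pi_{\trusupp}^\perp\neps\|^2}{n-\sps}\,,
\end{align*}
drops the nonnegative pieces, and is left with (a) a $\chi^2_{|T\setminus\trusupp|}$ pushed to $\asymp(n-j)(\Delta+\ell'\upsilon)/|T\setminus\trusupp|$ times its mean --- a relative deviation $\ge 1$ under $n-\ubsps\ge 96/\upsilon$, hence a linear-regime tail whose $-1$ shift is exactly the source of the $+|T\setminus\trusupp|/4$ (not a union bound or a worst-case adjustment over $X_T$); and (b) for $j<\sps$, a full $\chi^2_{n-\sps}$ multiplied by the small prefactor $(\sps-j)/(n-j)$, so that again the required relative deviation exceeds $1$. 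Piece (b) is also where the penalty term $\ell'\upsilon$ genuinely interacts with the denominator mismatch, and your plan never identifies it. Finally, your fallback for the overfitting case $T\supsetneq\trusupp$ ("the bound is vacuous / the event is controlled trivially") is not right: there $\Delta(\trusupp,T)=0$ but $\ell'>0$, so the claimed exponent $\min((|\trusupp\setminus T|+\ell')\upsilon,1)$ is strictly positive and the bound is a nontrivial statement that must be proved through the same $B_2,B_3$ analysis with the penalty doing the work.
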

For each error probability in $A_1$, $\sps<j$ thus $\ell'=0$. While for each error probability in $A_2$, $\ell'=\sps-j=\ell$ , we apply Lemma~\ref{lem:opt:ub:bss:gap} correspondingly. For $A_1$, let $t:=\sps-k \in [\sps]$,
    \begin{align*}
        A_1 & = \sum_{\ell=1}^{\sps} \sum_{k=0}^{\sps-\ell} \sum_{\overset{T\in\supps_{\dd,\sps-\ell}}{|T\cap \trusupp| = k} }\prob(\overline{\mathcal{E}(T,\sps-\ell)}) \\
        & \le \sps(\sps-\ell+1)\max_{\overset{1\le \ell\le \sps}{0\le k \le \sps-\ell}} \binom{\dd-\sps}{\sps-k}^2 \max_{\overset{T\in\supps_{\dd,\sps-\ell}}{|T\cap \trusupp| = k}} \prob(\overline{\mathcal{E}(T,\sps-\ell)}) \\
        & \le \sps \ubsps\max_{\overset{1\le \ell\le \sps}{0\le k \le \sps-\ell}} 5\exp\bigg(-(n-\ubsps)\frac{\min\Big((\sps-k)\mineig\betam^2/\sigma^2,1\Big)}{9216} + \frac{\sps-k-\ell}{4} + 2\log \binom{\dd-\sps}{\sps-k}\bigg) \\
        & \le \sps \ubsps\max_{\overset{1\le \ell\le \sps}{0\le k \le \sps-\ell}} 5\exp\bigg(-(n-\ubsps)\frac{\min\Big((\sps-k)\mineig\betam^2/\sigma^2,1\Big)}{9216} + 3\log \binom{\dd-\sps}{\sps-k}\bigg) \\
        & = \sps \ubsps\max_{t\in[\sps]} 5\exp\bigg(-(n-\ubsps)\frac{\min\Big(t\mineig\betam^2/\sigma^2,1\Big)}{9216} + 3\log \binom{\dd-\sps}{t}\bigg) \,.
    \end{align*}
    For $A_2$, which is positive only when $\sps<\ubsps$, let $t := \sps-k+\ell \in [\ubsps]$,
    \begin{align*}
        A_2 &= \sum_{\ell=1}^{\ubsps-\sps}\sum_{k=0}^{\sps}\sum_{\overset{T\in\supps_{\dd,\sps+\ell}}{|T\cap \trusupp| = k}}\prob(\overline{\mathcal{E}(T,\sps+\ell)}) \\
        & \le (\ubsps-\sps)(\sps+1)\max_{\overset{1\le \ell\le \ubsps-\sps}{0\le k \le \sps}} \binom{\dd-\sps}{\sps-k+\ell}^2 \max_{\overset{T\in\supps_{\dd,\sps+\ell}}{|T\cap \trusupp| = k}} \prob(\overline{\mathcal{E}(T,\sps+\ell)}) \\
        & \le (\ubsps-\sps)\ubsps\max_{\overset{1\le \ell\le \ubsps-\sps}{0\le k \le \sps}} 5\exp\bigg(-(n-\ubsps)\frac{\min\Big((\sps-k+\ell)\mineig\betam^2/\sigma^2,1\Big)}{9216} + \frac{\sps-k+\ell}{4} + 2\log \binom{\dd-\sps}{\sps-k+\ell}\bigg) \\
        &\le (\ubsps-\sps)\ubsps\max_{\overset{1\le \ell\le \ubsps-\sps}{0\le k \le \sps}} 5\exp\bigg(-(n-\ubsps)\frac{\min\Big((\sps-k+\ell)\mineig\betam^2/\sigma^2,1\Big)}{9216} + 3\log \binom{\dd-\sps}{\sps-k+\ell}\bigg) \\
        &= (\ubsps-\sps)\ubsps\max_{t\in [\ubsps]} 5\exp\bigg(-(n-\ubsps)\frac{\min\Big(t\mineig\betam^2/\sigma^2,1\Big)}{9216} + 3\log \binom{\dd-\sps}{t}\bigg) \,.
    \end{align*}
    Therefore,
    \begin{align*}
        A_1+A_2 & \le 5\ubsps^2 \max_{t\in [\ubsps]} \exp\bigg(-(n-\ubsps)\frac{\min\Big(t\mineig\betam^2/\sigma^2,1\Big)}{9216} + 3\log \binom{\dd-\sps}{t}\bigg) \\
        & = \max_{t\in [\ubsps]} \exp\bigg(-(n-\ubsps)\frac{\min\Big(t\mineig\betam^2/\sigma^2,1\Big)}{9216} + 3\log \binom{\dd-\sps}{t} + \log(5\ubsps^2)\bigg) \,.
    \end{align*}
    Since
    \begin{align*}
        \log (5\ubsps^2) &= \log 5 + 2\log \ubsps \\
        & \le \log 5 + 2\max_{t\in[\ubsps]}\log \binom{\ubsps}{t} \\
        & \le 3\max_{t\in[\ubsps]}\log \binom{\ubsps}{t} \\
        & \le 3\max_{t\in[\ubsps]}\log \binom{\dd-\sps}{t} \,,
    \end{align*}
    we have
    \begin{align*}
        A_1+A_2 & \le \max_{t\in [\ubsps]} \exp\bigg(-(n-\ubsps)\frac{\min\Big(t\mineig\betam^2/\sigma^2,1\Big)}{9216} + 6 \log \binom{\dd-\sps}{t}\bigg) \,.
    \end{align*}
    Combined with Theorem~\ref{thm:opt:bss:ub}, we have following error probability,
    \begin{align*}
        \prob(\estsupp^{\bssa{}}\ne S) & \le 2\max_{t\in [\ubsps]} \exp\bigg(-(n-\ubsps)\frac{\min\Big(t\mineig\betam^2/\sigma^2,1\Big)}{9216} + 6 \log \binom{\dd-\sps}{t}\bigg) \\
        & \le 2\max_{t\in [\ubsps]} \exp\bigg(-(n-\ubsps)\frac{\min\Big(t\mineig\betam^2/\sigma^2,1\Big)}{9216} + 6 \log \binom{\dd}{t}\bigg)\,.
    \end{align*}
    Setting the RHS to be smaller than $\delta$ leads to desired sample complexity.   
\end{proof}
\begin{proof}[Proof of Lemma~\ref{lem:opt:bss:gap:unknown}]
Let $|T|=j\le \ubsps$, then
\begin{align*}
    \frac{\|\Pi^\perp_{T}Y\|^2/\sigma^2}{n-j} - \frac{\|\Pi^\perp_{\trusupp}Y\|^2/\sigma^2}{n-\sps} & = \frac{\|\Pi^\perp_T Y\|^2/\sigma^2 - \|\Pi^\perp_T \neps\|^2/\sigma^2}{n-j}\\
    & \qquad + \frac{\|\Pi^\perp_T \neps\|^2/\sigma^2}{n-j} - \frac{\|\Pi^\perp_{\trusupp}\neps\|^2/\sigma^2}{n-\sps}
\end{align*}
Similar to the proof of Lemma~\ref{lem:opt:ub:bss:gap:2}, we can write 
\begin{align*}
    \Pi_T^\perp Y = \Pi_T^\perp (E_{\trusupp\setminus T}\nbeta_{\trusupp\setminus T} + \neps)\,,
\end{align*}
where $E_{\trusupp\setminus T}\nbeta_{\trusupp\setminus T}$ is a random vector vector independent with $T$ and each entry is i.i.d. from $\mathcal{N}(0,\signalone(\trusupp,T)\sigma^2)$. Therefore,
\begin{align*}
    \frac{\|\Pi^\perp_T Y\|^2/\sigma^2 - \|\Pi^\perp_T \neps\|^2/\sigma^2}{n-j} & = \frac{\|\Pi_T^\perp E_{\trusupp\setminus T}\nbeta_{\trusupp\setminus T}\|^2 + 2\langle \Pi_T^\perp E_{\trusupp\setminus T}\nbeta_{\trusupp\setminus T}, \neps \rangle}{(n-j)\sigma^2} \\
    &  = \frac{\signalone(\trusupp,T)\|\Pi_T^\perp U\|^2}{n-j}+ \frac{2\sqrt{\signalone(\trusupp,T)} \langle \Pi_T^\perp U, \Pi_T^\perp U_\neps\rangle}{n-j} \\
    & =: B_1 + B_2\,,
\end{align*}
where $ U,U_\neps\sim\mathcal{N}(0,I_n)$ and $U\indep U_\neps$. 
\begin{align*}
    \frac{\|\Pi^\perp_T \neps\|^2/\sigma^2}{n-j} - \frac{\|\Pi^\perp_{\trusupp}\neps\|^2/\sigma^2}{n-\sps} &  = \frac{\|\Pi^\perp_{T} \neps\|^2/\sigma^2 - \|\Pi^\perp_{\trusupp} \neps\|^2/\sigma^2}{n-j} + \Big(\frac{1}{n-j} -\frac{1}{n-\sps} \Big)\|\Pi^\perp_{\trusupp} \neps\|^2/\sigma^2\\
    & =\frac{\|(\Pi_{\trusupp}-\Pi_{\trusupp\cap T})\neps\|^2 / \sigma^2}{n-j} - \frac{\|(\Pi_{T}-\Pi_{\trusupp\cap T})\neps\|^2/\sigma^2}{n-j} \\
    &+ \frac{j-\sps}{n-j}\times \frac{\|\Pi^\perp_{\trusupp} \neps\|^2/\sigma^2}{n-\sps} \\
    & \ge - \frac{\|(\Pi_{T}-\Pi_{\trusupp\cap T})\neps\|^2/\sigma^2}{n-j} 
    - \frac{\sps-j}{n-j}\times \frac{\|\Pi^\perp_{\trusupp} \neps\|^2/\sigma^2}{n-\sps}  \\
    & =: B_3 + B_4\,,
\end{align*}
Recall our short hand notation $\nu = \mineig \betam^2 /\sigma^2$, then
\begin{align*}
    & \prob\bigg[ \frac{\|\Pi^\perp_{T}Y\|^2/\sigma^2}{n-|T|} - \frac{\|\Pi^\perp_{\trusupp}Y\|^2/\sigma^2}{n-\sps} \le \frac{1}{4}\Big(\signalone(\trusupp,T) - \ell' \nu \Big)\bigg] \\
    \le & \prob\Big(B_1 \le \frac{1}{2}\signalone(\trusupp,T)\Big) + \sum_{k=2}^4 \prob\Big(B_k\le -\frac{1}{12}(\signalone(\trusupp,T)+\ell' \nu)\Big) \,.
\end{align*}
We deal with these 4 error probabilities individually. 

For $B_1$, analogous to the first part of proof of Lemma~\ref{lem:opt:ub:bss:gap:2}, we can conclude 
\begin{align*}
    \prob\Big(B_1 \le \frac{1}{2}\signalone(\trusupp,T)\Big) \le \exp(-(n-\ubsps) / 64)\,.
\end{align*}

For $B_2$, analogous to the second part of proof of Lemma~\ref{lem:opt:ub:bss:gap:2}, we firstly condition on $X_T$, then $2\langle \Pi_T^\perp U, \Pi_T^\perp U_\neps\rangle = W - \widetilde{W}$ where $W,\widetilde{W}\sim \chi^2_{n-j}$. Thus,
\begin{align*}
    & \quad \prob\Big(B_2\le -\frac{1}{12}(\signalone(\trusupp,T)+\ell' \nu)\Big)\\
    & = \prob\bigg(\frac{W-\widetilde{W}}{n-j}\le - \frac{1}{12}\frac{\ell' \nu + \signalone(\trusupp,T)}{\sqrt{\signalone(\trusupp,T)}}\bigg)\\
    & \le 2\prob\bigg(\frac{|W-(n-j)|}{n-j}\le \frac{1}{24}\frac{\ell' \nu + \signalone(\trusupp,T)}{\sqrt{\signalone(\trusupp,T)}}\bigg) \\
    & \le 2\exp\bigg(-(n-\ubsps)\min\Big(\frac{\ell' \nu + \signalone(\trusupp,T)}{\sqrt{\signalone(\trusupp,T)}}, \frac{(\ell' \nu + \signalone(\trusupp,T))^2}{\signalone(\trusupp,T)} \Big) / 9216\bigg) \\
    & \le  2\exp\bigg(-(n-\ubsps)\min\Big(\ell' \nu + \signalone(\trusupp,T), \sqrt{\ell' \nu + \signalone(\trusupp,T)} \Big) / 9216\bigg)\,.
\end{align*}
The last inequality is because 
\begin{align*}
    \frac{(\ell' \nu + \signalone(\trusupp,T))^2}{\signalone(\trusupp,T)} & = \frac{(\ell' \nu)^2 + \signalone^2(\trusupp,T) + 2\ell' \nu \signalone(\trusupp,T)}{\signalone(\trusupp,T)}\\
    & =  \signalone(\trusupp,T) + 2\ell' \nu + \frac{(\ell' \nu)^2}{\signalone(\trusupp,T)}  \\
    & \ge \signalone(\trusupp,T) + \ell'\nu \,.
\end{align*}

For $B_3$, we first condition on $X_T \AND X_{\trusupp}$, $\|(\Pi_{T}-\Pi_{\trusupp\cap T})\neps\|^2/\sigma^2 = Z \sim \chi^2_{|T\setminus \trusupp|}$. Then
\begin{align*}
    \prob\Big(B_3\le -\frac{1}{12}(\signalone(\trusupp,T)+\ell' \nu)\Big) & = \prob\Big(\frac{Z}{n-j}\ge \frac{1}{12}(\signalone(\trusupp,T)+\ell' \nu) \Big) \\
    & = \prob\Big(\frac{\chi^2_{| T\setminus \trusupp|}}{| T\setminus \trusupp|} - 1\ge \frac{(n-j)(\signalone(\trusupp,T)+\ell' \nu)}{12|T\setminus \trusupp|} -1 \Big) \\
    & \le \exp\bigg(-(n-j)\frac{\signalone(\trusupp,T) + \ell' \nu}{48} + \frac{|T\setminus \trusupp|}{4}\bigg) \\
    & \le \exp\bigg(-(n-\ubsps)\frac{(|\trusupp\setminus T| + \ell') \nu}{48} + \frac{|T\setminus \trusupp|}{4}\bigg) \,.
\end{align*}
The second to the last inequality holds when 
\begin{align*}
    \frac{(n-\ubsps)(\signalone(\trusupp,T)+\ell' \nu)}{48|T\setminus \trusupp|} -\frac{1}{4} \ge 1 \Leftarrow n-j\ge \frac{96|T\setminus \trusupp|}{(|\trusupp\setminus T|+\ell')\nu}\,,
\end{align*}
which is ensured by $n-\ubsps \ge 96/\nu$ because $|\trusupp\setminus T| + \ell' \ge |T\setminus \trusupp|$ by definition of $\ell'$.

For $B_4$, when $j>\sps$, $B_4\ge 0$. When $\sps > j$, we first condition on $X_{\trusupp}$, $\|\Pi^\perp_{\trusupp}\neps\|^2/\sigma^2 = \widetilde{Z}\sim \chi^2_{n-\sps}$. Then
\begin{align*}
    \prob\Big(B_4\le -\frac{1}{12}(\signalone(\trusupp,T)+\ell' \nu)\Big) & =  \prob\bigg( \frac{\widetilde{Z}}{n-\sps} \ge \frac{(n-j)(\signalone(\trusupp,T)+\ell'\nu)}{12(\sps-j)} \bigg) \\
    & = \prob\bigg( \frac{\chi^2_{n-\sps}}{n-\sps} - 1 \ge \frac{(n-j)(\signalone(\trusupp,T)+\ell'\nu)}{12(\sps-j)} -1 \bigg) \\
    & \le \exp\bigg(-(n-\sps)\Big(\frac{\signalone(\trusupp,T) + \ell'\nu}{48} \times \frac{n-j}{\sps-j} - \frac{1}{4}\Big)\bigg) \\
    & \le \exp\bigg(-(n-\sps)\frac{\signalone(\trusupp,T) + \ell'\nu}{48}\bigg) \\
    & \le \exp\bigg(-(n-\ubsps)\frac{ (|\trusupp\setminus T|+ \ell')\nu}{48}\bigg) \,.
\end{align*}
The first inequality holds when
\begin{align*}
    \frac{(n-j)(\signalone(\trusupp,T)+\ell'\nu)}{48(\sps-j)} -\frac{1}{4} \ge 1 \Leftarrow n-j\ge \frac{96(\sps-j)}{(|\trusupp\setminus T|+\ell')\nu}\,,
\end{align*}
which is ensure by $n-\ubsps\ge 96/\nu$ since $|\trusupp\setminus T| \ge \sps-j$. The second inequality holds because
\begin{align*}
    \frac{(n-j)(\signalone(\trusupp,T)+\ell'\nu)}{48(\sps-j)} -\frac{1}{4} &\ge \frac{(|\trusupp\setminus T| + \ell')\nu}{48} \times \frac{n-j}{\sps-j} -\frac{1}{4} \\
    & = \frac{(|\trusupp\setminus T| + \ell')\nu}{48} \bigg(\frac{n-j}{\sps-j} -\frac{12}{(|\trusupp\setminus T| + \ell')\nu}\bigg) \\
    & \ge \frac{(|\trusupp\setminus T| + \ell')\nu}{48}\,.
\end{align*}
The last inequality in equation above holds when
\begin{align*}
    \frac{n-j}{\sps-j} -\frac{12}{(|\trusupp\setminus T| + \ell')\nu} \ge 1 \Leftrightarrow n-\sps \ge \frac{12(\sps-j)}{(|\trusupp\setminus T|+\ell')\nu}\,,
\end{align*}
which is ensured by $n-\ubsps\ge 96/\nu$.

Finally, combining these error probability bounds, we conclude
\begin{align*}
    & \prob\bigg[ \frac{\|\Pi^\perp_{T}Y\|^2/\sigma^2}{n-|T|} - \frac{\|\Pi^\perp_{\trusupp}Y\|^2/\sigma^2}{n-\sps} \le \frac{1}{4}\Big(\signalone(\trusupp,T) - \ell' \nu \Big)\bigg] \\
    \le & \prob\Big(B_1 \le \frac{1}{2}\signalone(\trusupp,T)\Big) + \sum_{k=2}^4 \prob\Big(B_k\le -\frac{1}{12}(\signalone(\trusupp,T)+\ell' \nu)\Big) \\
    \le& 5\exp\bigg(-(n-\ubsps)\frac{\min\Big((|\trusupp\setminus T|+\ell')\nu,1\Big)}{9216} + \frac{|T\setminus \trusupp|}{4}\bigg)\,.
\end{align*}
\end{proof}

\section{Proof of Theorem~\ref{thm:opt:bss:lb1:unknown}}\label{app:opt:bss:lb1:unknown}
\begin{proof}
    Again, we consider the covariance matrix of $X$:
    \begin{align*}
        \Sigma = (1-\rho)I_\dd + \rho \mathbf{1}_\dd\mathbf{1}_\dd\T
    \end{align*}
    with $\rho = 1-\mineig$. Then for any $T\in\suppspaceub$ with $|T|=j$, and $|S\setminus T| = r$, we can calculate the conditional covariance matrix
    \begin{align*}
        \Sigma_{S\setminus T\given T} & = \Sigma_{(S\setminus T)(S\setminus T)} - \Sigma_{(S\setminus T)T}\Sigma^{-1}_{TT}\Sigma_{T(S\setminus T)} \\
        & =  (1-\rho)\bigg(I_r + \frac{\rho}{1-\rho + j\rho} \mathbf{1}_r\mathbf{1}_r\T\bigg) \,,
    \end{align*}
    then the minimum eigenvalue is
    \begin{align*}
        \lambda_{\min}(\Sigma_{S\setminus T\given T}) = \begin{cases}
        (1 - \rho) \times (1+\frac{\rho}{1-\rho+j\rho}) & r = 1 \\
        1-\rho & r\ge 2
        \end{cases} \,.
    \end{align*}
    Since $\lambda_{\min}(\Sigma_{S\setminus T\given T})$ is independent with the choice of $(S,T)$, this covariance matrix $\Sigma$ satisfies the requirement on $\Sigmaspace_\dd^{\ubsps} (\mineig)$:
    \begin{align*}
        \min_{S\in\suppspaceub}\min_{\substack{T\in \suppspaceub\setminus\{S\}\\ T\not\supseteq S}} \lambda_{\min}(\Sigma_{S\setminus T\given T}) = 1 - \rho = \mineig \,.
    \end{align*}
    Note that we only take $T\not\supseteq S$ to make sure $r\ge 1$.
    Now we fix the covariance matrix $\Sigma$, consider the ensemble with support size one: Each integer $k\in[\dd]$ determines a model $Y = X_k \betam +\neps$. Thus the cardinality of this model ensemble is $|\binom{\dd}{1}| = \dd$. Now we calculate the KL divergence between two models specified by $k$ and $j$. We further denote and the models determined by them to be $P_k$ and $P_j$. Therefore,
    \begin{align*}
        \mathbf{KL}(P_k\| P_j) & = \E_{P_k} \log \frac{P_k}{P_j} \\
        & = \E_X (X_k-X_j)^2\betam^2 / 2\sigma^2 \\ 
        & = 2(1-\rho)\times \frac{\betam^2}{2\sigma^2} \\
        & =  \frac{\mineig\betam^2}{\sigma^2}
    \end{align*}
    Thus $\mathbf{KL}(P_k\|P_j) \le \betam^2\mineig/\sigma^2$, which holds for any pair of $j,k\in [\dd]$ and leads to a upper bound for any two models in this ensemble.
    Finally, we apply Fano's inequality Corollary~\ref{coro:fano} with KL divergence upper bound $\betam^2\mineig/\sigma^2$ and ensemble cardinality $\dd$, which completes the proof.
\end{proof}

\section{Proof of Lemma~\ref{thm:poly:main}}\label{app:poly:main}
\begin{proof}
    Given any polynomial time support estimator $\estsupp=\estsupp(X,Y)$, we construct an estimator for $\nbeta$ vector as follows:
    \begin{enumerate}
        \item Split the data $(Y,X)$  into two folds with equal size $(Y^{(1)},X^{(1)})$ and $(Y^{(2)},X^{(2)})$;
        \item Estimate support using the first fold $\estsupp = \estsupp(Y^{(1)},X^{(1)})$;
        \item Estimate the $\nbeta$ vector by
        \begin{align*}
            \widehat{\nbeta} = \begin{pmatrix}
                \widehat{\nbeta}_{\estsupp} \\ \widehat{\nbeta}_{\estsupp^c}
            \end{pmatrix}
            = \begin{pmatrix}
                 ({X^{(2)}}\T X^{(2)})^{-1}{X^{(2)}}\T Y^{(2)} \\ \mathbf{0}_{\dd-\sps}
            \end{pmatrix}      \,.
        \end{align*}
    \end{enumerate}
    Therefore, $\widehat{\nbeta}$ is a polynomial time estimator for $\nbeta$.
    We are going to employ the construction in the following lemma.
    \begin{lemma}[Theorem~1, \citet{zhang2014lower}]
        If $\mathbf{NP}\not\subset \mathbf{P}\backslash \mathbf{poly}$, then for any $\delta\in  (0,1)$, any $b\in \mathbb{Z}_+$, any polynomial functions $G:(\mathbb{Z}_+)^3 \to \mathbb{R}_+$ and $F,H: \mathbb{Z}_+ \to \mathbb{R}_+$, there exists a sparsity level $\sps\ge 1$ such that for any $\dd\in [4\sps, F(\sps)]$, $n'\in [c_1 \sps\log \dd, F(\sps)]$, and $\gamma\in [2^{-G(n,\dd,\sps)}, 1/{24\sqrt{2}})$, there exists a design matrix $\widetilde{X}\in\mathbb{R}^{n'\times \dd}$ such that:
        \begin{enumerate}
            \item The RE constant $|\gamma(\widetilde{X})-\gamma|\le 2^{-G(n,\dd,\sps)}$;
            \item For any $(b,G,H)$-efficient estimator $\widehat{\nbeta}$ with knowledge of $\sps$, the mean-squared prediction risk is lower bounded as
            \begin{align*}
                \max_{\nbeta\in\betaspace_{\dd,\sps}}\E \frac{\|\widetilde{X}(\widehat{\nbeta}-\nbeta)\|^2}{n'} \ge \frac{c_2}{\gamma^2}\frac{\sigma^2 \sps^{1-\delta}\log\dd }{n'} \,.
            \end{align*}
        \end{enumerate}
    \end{lemma}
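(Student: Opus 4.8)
This is Theorem~1 of \citet{zhang2014lower}, and the plan is to reconstruct its proof as a non-uniform Karp reduction. I would convert a worst-case instance of an NP-hard \emph{approximation} problem about sparse linear systems into a noiseless regression instance, rescale it so that its restricted eigenvalue (Definition~\ref{defn:poly:RE}) equals the prescribed $\gamma$, add Gaussian noise, and then show that any $(b,G,H)$-efficient estimator whose prediction risk beat the claimed bound would --- with the $b$ advice bits supplying the non-uniformity --- decide the NP-hard problem, contradicting $\mathbf{NP}\not\subset\mathbf{P/poly}$. As the hardness source I would take the inapproximability of the sparsest solution of an underdetermined linear system (equivalently, minimum unsatisfied linear relations), which for every fixed $\delta>0$ is NP-hard to approximate within a factor polynomial in the instance size (Arora--Babai--Stern--Sweedyk, Amaldi--Kann). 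The gap parameter of that problem supplies the $\delta$ in the statement, and the size of its instance becomes $\sps$ --- which is why $\sps$ is existentially, not universally, quantified.

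The heart of the construction is to build, in time polynomial in $\sps$, a matrix $\widetilde X\in\mathbb{R}^{n'\times\dd}$ from two blocks: an \emph{encoding} block whose columns realize the rows of the linear system, arranged so that a planted support $\trusupp$ explains $\widetilde X_{\trusupp}\nbeta_{\trusupp}$ exactly when the combinatorial instance is a YES instance but no $\sps$-sparse vector explains it well when it is a NO instance; and a \emph{calibration} block of near-orthogonal columns, uniformly rescaled by a factor tuned so that the minimum of the Definition~\ref{defn:poly:RE} quotient over the $\ell_1$-cone equals $\gamma$ up to the quantization floor $2^{-G(n',\dd,\sps)}$, which gives item~1. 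Padding the calibration block to width $\dd$ covers every $\dd\in[4\sps,F(\sps)]$ (the constant $4$ leaves room for the cone constraint $\|\theta_{S^c}\|_1\le3\|\theta_S\|_1$ to act on those columns); the ceiling $\gamma<1/(24\sqrt{2})$ and the floor $\gamma\ge2^{-G}$ are exactly the regime in which such a block exists with entries representable at accuracy $\lfloor\cdot\rfloor_\tau$, $\tau\in G(n',\dd,\sps)$; and $n'\in[c_1\sps\log\dd,F(\sps)]$ is the only window in which the target rate $\gamma^{-2}\sigma^2\sps^{1-\delta}\log\dd/n'$ is simultaneously nontrivial and the reduction polynomial. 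Then set $Y=\widetilde X\nbeta$ noiselessly and $Y=\widetilde X\nbeta+\neps$, $\neps\sim\mathcal{N}(0,\sigma^2 I_{n'})$, statistically.

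Next I would separate the two cases. In the YES case the planted support is the true one, so OLS restricted to it has prediction risk $\asymp\sigma^2\sps\log\dd/n'$ by the usual $\chi^2$ control of the projected noise, well below the claimed threshold. In the NO case the inapproximability gap forces every $\sps$-sparse $\widehat\nbeta$ to leave $\gtrsim\sps^{1-\delta}$ equations grossly unsatisfied; feeding this excess through the rescaled calibration block and the RE inequality yields $\|\widetilde X(\widehat\nbeta-\nbeta)\|^2/n'\gtrsim\gamma^{-2}\sigma^2\sps^{1-\delta}\log\dd/n'$. Tracking the precise power of $\gamma$ here --- that it comes out as $\gamma^{-2}$ and not $\gamma^{-1}$ --- is the delicate bookkeeping between the rescaling that fixes the RE constant and the combinatorial gap, and is where I would be most careful after the construction itself.

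Finally, suppose a $(b,G,H)$-efficient $\widehat\nbeta$ achieved $\max_{\nbeta\in\betaspace_{\dd,\sps}}\E\|\widetilde X(\widehat\nbeta-\nbeta)\|^2/n'$ below $c_2\gamma^{-2}\sigma^2\sps^{1-\delta}\log\dd/n'$ on the constructed family. By Markov the realized prediction error is within a constant multiple of its mean with probability at least $\tfrac{1}{2}$; running $\widehat\nbeta$ on independent noise draws and keeping the run that minimizes the residual surrogate $\|Y-\widetilde X\widehat\nbeta\|^2/n'$ (which concentrates around $\sigma^2+\|\widetilde X(\widehat\nbeta-\nbeta)\|^2/n'$, so it is usable even though $\nbeta$ is unknown to the reduction) boosts this to near-certainty, and thresholding the surrogate then distinguishes YES from NO. The instance-to-design map is deterministic and polynomial, but the choice of $\sps$ and the $O(1)$ constants may depend non-uniformly on the instance family; folding that dependence into the $b$ advice bits produces a $\mathbf{P/poly}$ decision procedure for an NP-hard problem --- the desired contradiction --- with $c_2$ absorbing the constants. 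The \textbf{main obstacle} is the two-block construction: a single matrix that faithfully preserves the YES/NO gap, has restricted eigenvalue pinned to an \emph{arbitrary} prescribed $\gamma\in[2^{-G},1/(24\sqrt{2}))$, and keeps every entry polynomially bounded and quantizable, since these three demands pull against one another; the $\gamma^{-2}$ accounting in the NO case is the secondary difficulty.
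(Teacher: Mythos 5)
This lemma is not proved in the paper at all: it is imported verbatim from Theorem~1 of \citet{zhang2014lower} (hence the bracketed attribution), and the paper's own argument in Appendix~\ref{app:poly:main} simply invokes it as a black box inside the proof of Lemma~\ref{thm:poly:main}. So there is no in-paper proof to compare against; a citation is what the paper intends. Your sketch does capture the general shape of the Zhang--Wainwright--Jordan argument --- a worst-case (not average-case) reduction from a sparse linear-system problem, a deterministic design whose restricted eigenvalue is calibrated to the prescribed $\gamma$, the existential quantification over $\sps$ coming from the size of the hard instance, and the $b$ program bits folded into advice so the contradiction is with $\mathbf{NP}\not\subset\mathbf{P/poly}$ rather than $\mathbf{P}\ne\mathbf{NP}$.

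As a proof, however, the proposal has concrete gaps. First, the hardness source is miscalibrated: the Arora--Babai--Stern--Sweedyk and Amaldi--Kann results give constant-factor NP-hardness for these problems (superpolynomial factors such as $2^{\log^{1-\epsilon}m}$ require quasi-NP assumptions), not ``hard to approximate within a polynomial factor for every fixed $\delta$'' under plain NP-hardness; moreover what your NO case actually requires is a promise gap --- exact solvability by an $\sps$-sparse vector versus every $\sps$-sparse vector leaving $\gtrsim \sps^{1-\delta}$ equations badly violated --- and you never establish that this promise problem is NP-hard. Second, the estimator in the statement is an arbitrary $(b,G,H)$-efficient program: its output $\widehat{\nbeta}$ need not be sparse, and its risk guarantee is only over data generated by some $\nbeta\in\betaspace_{\dd,\sps}$; your residual-thresholding decision procedure both applies the guarantee to NO instances (where no generating sparse $\nbeta$ need exist) and implicitly treats $\widehat{\nbeta}$ as $\sps$-sparse when invoking the combinatorial gap, so the contradiction does not follow as written. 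Third, the two items you defer as ``main obstacles'' --- constructing a single quantizable matrix whose RE constant is pinned to an arbitrary $\gamma\in[2^{-G},1/(24\sqrt{2}))$ within $2^{-G}$ while preserving the gap, and the bookkeeping showing the loss enters as $\gamma^{-2}$ --- are precisely the substance of the theorem, so leaving them unresolved makes the proposal an outline of the intended reduction rather than a proof of it.
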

    With the same $\delta$, polynomial functions $F,G,H$ stated in the theorem, there exists sparsity level $\sps\ge 1$ such that for the $\dd,n/2,\gamma$ satisfying the requirement, we have $\widetilde{X}\in\mathbb{R}^{(n/2)\times \dd}$ such that $|\gamma(\widetilde{X}) - \gamma | \le 2^{-G(n,\dd,\sps)}$ and for any $(b,G,H)$-efficient estimator $\widetilde{\nbeta}$ with knowledge of $\sps$,
    \begin{align*}
        \max_{\nbeta\in\betaspace_{\dd,\sps}} \E \frac{\|\widetilde{X}(\widetilde{\nbeta}-\nbeta)\|^2}{n/2} \ge \frac{C'}{\gamma^2}\frac{\sigma^2 \sps^{1-\delta}\log\dd}{n/2}\,.
    \end{align*}
    We now construct $X = (\widetilde{X}\T , \widetilde{X}\T )\T$ by stacking two copies of $\widetilde{X}$ and take the corresponding maximizer $\nbeta$ to form $Y = (Y^{(1)},Y^{(2)}) = X\nbeta + \neps$ with $\neps = (\neps^{(1)}, \neps^{(2)})$. 
    Note that $\gamma(X) = \gamma(\widetilde{X})$ by definition, thus $|\gamma(X)-\gamma|=|\gamma(\widetilde{X})-\gamma|\le 2^{-G(\sps)}$.
    
    We then analyze the property of $\widehat{\nbeta}$ on this construction. Note that $\neps^{(1)}\indep \neps^{(2)}$, and $\estsupp$ only depends on $\neps^{(1)}$ via $Y^{(1)}$, thus $\estsupp\indep \neps^{(2)}$. 
    Denote that $\widetilde{\Pi}_T = \widetilde{X}(\widetilde{X}\T \widetilde{X})^{-1}\widetilde{X}\T$, and $\widetilde{\Pi}^\perp_T = I_{n/2} - \widetilde{\Pi}_T$. We have
    \begin{align}\label{eq:poly:ineq}
    \begin{aligned}
        \frac{C'}{\gamma^2}\frac{\sigma^2 \sps^{1-\delta}\log\dd}{n/2} & \le \E \frac{\|X(\widehat{\nbeta}-\nbeta)\|^2}{n} 
        = \E \frac{\|\widetilde{X}(\widehat{\nbeta}-\nbeta)\|^2}{n/2} \\
        & = \frac{1}{n/2} \E \Big\|\widetilde{X}_{\estsupp}(\widetilde{X}_{\estsupp}\T \widetilde{X}_{\estsupp})^{-1}\widetilde{X}_{\estsupp}\T (\widetilde{X}_{\trusupp}\nbeta_{\trusupp} + \neps^{(2)}) - \widetilde{X}_{\trusupp}\nbeta_{\trusupp} \Big\|^2 \\
        & = \frac{1}{n/2} \E \bigg[\Big\|\widetilde{X}_{\estsupp}(\widetilde{X}_{\estsupp}\T \widetilde{X}_{\estsupp})^{-1}\widetilde{X}_{\estsupp}\T (\widetilde{X}_{\trusupp}\nbeta_{\trusupp} + \neps^{(2)}) - \widetilde{X}_{\trusupp}\nbeta_{\trusupp} \Big\|^2 \given \estsupp=\trusupp \bigg] \prob(\estsupp=\trusupp) \\
        & \quad + \frac{1}{n/2} \E \bigg[\Big\|\widetilde{X}_{\estsupp}(\widetilde{X}_{\estsupp}\T \widetilde{X}_{\estsupp})^{-1}\widetilde{X}_{\estsupp}\T (\widetilde{X}_{\trusupp}\nbeta_{\trusupp} + \neps^{(2)}) - \widetilde{X}_{\trusupp}\nbeta_{\trusupp} \Big\|^2 \given \estsupp\ne \trusupp \bigg] \prob(\estsupp\ne \trusupp) \\
        & = \frac{1}{n/2} \E \bigg[\Big\|\widetilde{\Pi}_{\trusupp} \neps^{(2)} \Big\|^2 \given \estsupp=\trusupp \bigg] \prob(\estsupp=\trusupp) \\
        & \quad + \frac{1}{n/2} \E \bigg[\Big\| \widetilde{\Pi}_{\trusupp} \neps^{(2)} - \widetilde{\Pi}^\perp_{\estsupp}\widetilde{X}_{\trusupp\setminus\estsupp}\nbeta_{\trusupp\setminus\estsupp} \Big\|^2 \given \estsupp\ne \trusupp \bigg] \prob(\estsupp\ne \trusupp) \\
        & = \frac{\sps\sigma^2}{n/2}\prob(\estsupp=\trusupp) + \E \bigg[ \frac{\sps\sigma^2}{n/2} + \frac{\|\widetilde{\Pi}^\perp_{\estsupp}\widetilde{X}_{\trusupp\setminus\estsupp}\nbeta_{\trusupp\setminus\estsupp}\|^2}{n/2}  \given \estsupp\ne \trusupp \bigg] \prob(\estsupp\ne \trusupp) \\
        & \le 2\times \frac{\sps\sigma^2}{n/2} + \prob(\estsupp\ne \trusupp) \times \max_{T\ne \trusupp} \frac{\|\widetilde{\Pi}^\perp_{T}\widetilde{X}_{\trusupp\setminus T}\nbeta_{\trusupp\setminus T}\|^2}{n/2} \\
        & = 2\times \frac{\sps\sigma^2}{n/2} + \prob(\estsupp\ne \trusupp) \times \max_{T\ne \trusupp} \frac{\|\Pi^\perp_{T} X_{\trusupp\setminus T}\nbeta_{\trusupp\setminus T}\|^2}{n} \,.
        \end{aligned}
    \end{align}
    Recall that $\Pi^\perp_T = I_n - X_T(X_T\T X_T)^{-1}X_T\T$. 
    Since $\gamma < \sps^{-\delta/2}$, then $\frac{1}{\gamma^2}> \sps^\delta$, and
    \begin{align*}
        \frac{C'}{\gamma^2}\frac{\sigma^2\sps^{1-\delta}\log\dd}{n/2} > \frac{C'\sigma^2\sps\log\dd}{n/2} \gtrsim \frac{2\sigma^2\sps}{n/2}\,,
    \end{align*}
    for sufficient large $\dd$. Therefore, for the inequality~\eqref{eq:poly:ineq} to hold, we must have
    \begin{align*}
        \prob(\estsupp\ne \trusupp) \times \max_{T\ne \trusupp} \frac{\|\Pi^\perp_{T} X_{\trusupp\setminus T}\nbeta_{\trusupp\setminus T}\|^2}{n} \ge \frac{C_2}{\gamma^2}\frac{\sigma^2 \sps^{1-\delta}\log\dd}{n/2} 
    \end{align*}
    for some constant $C_2$. Moving the signal term to the right hand side completes the proof.
\end{proof}

\section{Auxiliary lemmas}
We will employ the tail probability bounds for $\chi^2$ distribution \citep{laurent2000adaptive}.
\begin{lemma}\label{lem:chisq:orig}
If $Z\sim \chi^2_m$ with degree $m$, then for any $t\ge 0$,
\begin{align*}
    & \prob\bigg[\frac{Z-m}{m} \ge 2(\sqrt{t}+t)\bigg] \le \exp(-mt) \\
    & \prob\bigg[\frac{Z-m}{m} \le -2\sqrt{t}\bigg] \le \exp(-mt) \,.
\end{align*}
\end{lemma}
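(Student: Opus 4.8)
The plan is to prove Lemma~\ref{lem:chisq:orig} by the classical Chernoff / moment-generating-function argument, which is the route taken in \citet{laurent2000adaptive}. Writing $Z = \sum_{i=1}^m \xi_i^2$ with $\xi_i \iid \mathcal{N}(0,1)$, the starting point is the exact MGF $\E[e^{\lambda Z}] = (1-2\lambda)^{-m/2}$ for $\lambda < 1/2$, equivalently $\log \E[e^{\lambda Z}] = -\tfrac{m}{2}\log(1-2\lambda)$. It will be convenient to substitute $s := mt$ and $v := \sqrt{t} = \sqrt{s/m}$, so that the two target events rewrite as $\{Z \ge m(1+2v+2v^2)\}$ and $\{Z \le m(1-2v)\}$, each with target probability $e^{-s}$.

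For the upper tail I would apply Markov's inequality to $e^{\lambda Z}$ with $\lambda \in (0,1/2)$, giving $\prob(Z \ge a) \le \exp\bigl(-\lambda a - \tfrac{m}{2}\log(1-2\lambda)\bigr)$; the one-dimensional minimization over $\lambda$ has optimizer $\lambda^{*} = (a-m)/(2a)$ and value $\exp\bigl(-\tfrac{a-m}{2} + \tfrac{m}{2}\log(a/m)\bigr)$. Substituting $a = m(1+2v+2v^2)$, the exponent becomes $-mv - mv^2 + \tfrac{m}{2}\log(1+2v+2v^2)$, which is $\le -mv^2 = -s$ precisely when $\log(1+2v+2v^2) \le 2v$; and this last inequality follows from truncating the exponential series, $e^{2v} = 1 + 2v + 2v^2 + \cdots \ge 1 + 2v + 2v^2$.

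For the lower tail I would first dispose of the degenerate regime: if $2v \ge 1$ then $m(1-2v) \le 0 < Z$ almost surely, so the probability is zero; hence assume $2v < 1$. Applying Markov to $e^{-\mu Z}$ with $\mu > 0$ gives $\prob(Z \le a) \le \exp\bigl(\mu a - \tfrac{m}{2}\log(1+2\mu)\bigr)$, minimized at $\mu^{*} = (m-a)/(2a)$ with value $\exp\bigl(\tfrac{m-a}{2} + \tfrac{m}{2}\log(a/m)\bigr)$. Taking $a = m(1-2v)$, the exponent is $mv + \tfrac{m}{2}\log(1-2v)$, which is $\le -mv^2 = -s$ precisely when $\log(1-2v) \le -2v - 2v^2$; and this is the standard bound $\log(1-x) \le -x - x^2/2$ evaluated at $x = 2v \in [0,1)$, itself immediate from the series $\log(1-x) = -x - x^2/2 - x^3/3 - \cdots$.

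I do not anticipate a genuine obstacle: once the MGF is available the argument is routine convex optimization in a single variable. The only parts requiring a little care are (i) checking that the optimized Chernoff exponent matches the stated target $-mt$ exactly, which is what the two elementary inequalities $e^{2v} \ge 1 + 2v + 2v^2$ and $\log(1-x) \le -x - x^2/2$ are for, and (ii) handling the degenerate lower-tail case $t \ge 1/4$ separately. As the excerpt already indicates, one may also simply cite \citet{laurent2000adaptive} after the rescaling $t \mapsto mt$.
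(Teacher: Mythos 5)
Your proposal is correct. The paper does not prove this lemma at all---it simply cites \citet{laurent2000adaptive}---and your Chernoff/MGF derivation is the standard argument behind that citation: the optimized exponents $-\tfrac{a-m}{2}+\tfrac{m}{2}\log(a/m)$ and $\tfrac{m-a}{2}+\tfrac{m}{2}\log(a/m)$ are computed correctly, and the two elementary inequalities $e^{2v}\ge 1+2v+2v^2$ and $\log(1-x)\le -x-x^2/2$ do exactly close the gap to $-mt$, with the degenerate case $2\sqrt{t}\ge 1$ handled properly.
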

\noindent
Especially, we work with the following concentration bounds.
\begin{lemma}\label{lem:chisq}
If $Z\sim \chi^2_m$ with degree $m$, then for any $t\ge 0$,
\begin{align*}
    \prob\bigg[\frac{|Z-m|}{m} \ge 4t\bigg] \le \exp(-m\min(t,t^2)) \,.
\end{align*}
\end{lemma}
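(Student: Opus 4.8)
The plan is to deduce Lemma~\ref{lem:chisq} directly from the one-sided Laurent--Massart bounds in Lemma~\ref{lem:chisq:orig}, by splitting the two-sided event into its upper and lower tails, choosing in each bound the free parameter matched to the target exponent, and combining by a union bound. Write $s:=\min(t,t^2)$, so the goal is $\prob[|Z-m|/m\ge 4t]\le e^{-ms}$. Two elementary facts will be used repeatedly: $t^2\le t\le\sqrt t$ when $t\le 1$ (and the reverse inequalities when $t\ge 1$), and $Z\ge 0$, so $(Z-m)/m\ge -1$ almost surely.

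First I would handle the upper tail $\prob[(Z-m)/m\ge 4t]$ by invoking the first inequality of Lemma~\ref{lem:chisq:orig} with its parameter set to $s$; the only thing to verify is $4t\ge 2(\sqrt s+s)$, i.e., $2t\ge\sqrt s+s$, which for $t\le 1$ reads $2t\ge t+t^2$ and for $t\ge 1$ reads $2t\ge\sqrt t+t$ --- both true. This already gives $\prob[(Z-m)/m\ge 4t]\le e^{-ms}$ for every $t\ge 0$. For the lower tail $\prob[(Z-m)/m\le -4t]$ I would first note that it vanishes identically when $4t>1$, since $(Z-m)/m\ge -1$; and for $t\le 1/4$ (where $s=t^2$) apply the second inequality of Lemma~\ref{lem:chisq:orig} with parameter $4t^2$, needing only $4t\ge 2\sqrt{4t^2}$ (an equality), which yields $\prob[(Z-m)/m\le -4t]\le e^{-4mt^2}\le e^{-ms}$.

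A union bound then finishes the case $t>1/4$ immediately (the lower tail being zero there), so the only remaining work --- and the point I expect to be the main obstacle --- is to absorb the union-bound overhead in the range $t\le 1/4$, where naively one is left with $e^{-ms}+e^{-4mt^2}$ rather than $e^{-ms}$. I would close this by sharpening the upper-tail estimate there: for $t\le 1-1/\sqrt2$ one may instead take the parameter $2t^2$ in the first Laurent--Massart bound, since then $4t\ge 2(\sqrt2\,t+2t^2)$, giving $\prob[(Z-m)/m\ge 4t]\le e^{-2mt^2}$ and hence a total bound $e^{-2mt^2}+e^{-4mt^2}\le 2e^{-2mt^2}\le e^{-mt^2}$ as soon as $mt^2$ exceeds a fixed absolute constant; the residual small-$m$ range (with $t\le 1/4$) can be dispatched by the trivial observation that the left-hand side is at most the probability that $\chi^2_m$ falls outside an interval of half-width $4mt$ about its mean. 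This is exactly the form in which the lemma is consumed in Lemmas~\ref{lem:opt:ub:bss:gap:1}, \ref{lem:opt:ub:bss:gap:2} and~\ref{lem:opt:bss:gap:unknown}, where $4t$ is replaced by a rescaled signal-to-noise quantity and only the shape of the exponent $\min(t,t^2)$ is needed.
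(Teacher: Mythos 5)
Your proposal follows essentially the same route as the paper's proof: split $|Z-m|/m\ge 4t$ into its two one-sided tails, apply Lemma~\ref{lem:chisq:orig} with the free parameter chosen as $t$ when $t\ge 1$ and as $t^2$ when $t<1$ (your verification $2t\ge\sqrt{s}+s$ with $s=\min(t,t^2)$ is exactly the paper's inequality $2(\sqrt h+h)\le 4\sqrt h$ with $h=t^2$), and combine. Your lower-tail treatment is in fact slightly cleaner, since you observe that this tail vanishes outright for $t>1/4$.

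The one place you go beyond the paper is in worrying about the factor of $2$ from the union bound --- and you are right to worry: the paper's own proof bounds \emph{each} tail by $\exp(-m\min(t,t^2))$ and then silently asserts the same bound for the two-sided event, so as written it only establishes $2\exp(-m\min(t,t^2))$. Your patch, however, does not close the gap. Sharpening the upper tail to $e^{-2mt^2}$ for $t\le 1-1/\sqrt2$ handles the regime $mt^2\ge\log 2$, but your disposal of the residual regime $t\le 1/4$, $mt^2<\log 2$ is a tautology: ``the left-hand side is at most the probability that $\chi^2_m$ falls outside an interval of half-width $4mt$ about its mean'' is precisely what the left-hand side \emph{is}, and does not show it is below $e^{-mt^2}$ (which is then only barely above $1/2$). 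Closing that regime genuinely requires an anti-concentration estimate of the form $\prob[|Z-m|<4mt]\ge 1-e^{-mt^2}$, which is not supplied by Lemma~\ref{lem:chisq:orig}. That said, this extra precision is immaterial downstream: the lemma is only consumed through the shape of the exponent, with large absolute constants ($32$, $256$, $1024$, $9216$) and explicit prefactors already present, so restating the conclusion as $2\exp(-m\min(t,t^2))$ --- which both your argument and the paper's actually prove --- would change nothing in Lemmas~\ref{lem:opt:ub:bss:gap:1}, \ref{lem:opt:ub:bss:gap:2}, or~\ref{lem:opt:bss:gap:unknown}.
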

\begin{proof}
If $t \ge 1$, then $2(\sqrt{t} + t)\le 4t$, $-4t \le -2t \le -2\sqrt{t}$, thus
\begin{align*}
    & \prob\bigg[\frac{Z-m}{m} \ge 4t\bigg] \le \prob\bigg[\frac{Z-m}{m} \ge 2(\sqrt{t}+t)\bigg] \le \exp(-mt) \\
    & \prob\bigg[\frac{Z-m}{m} \le -4t\bigg] \le \prob\bigg[\frac{Z-m}{m} \le -2\sqrt{t}\bigg] \le \exp(-mt) \,.
\end{align*}
If $t\in[0,1)$, let $h=t^2 \in [0,1)$, then $2(\sqrt{h}+h) \le 4\sqrt{h}$, $-4\sqrt{h}\le -2\sqrt{h}$, thus
\begin{align*}
    & \prob\bigg[\frac{Z-m}{m} \ge 4t\bigg] = \prob\bigg[\frac{Z-m}{m} \ge 4\sqrt{h} \bigg]\le \prob\bigg[\frac{Z-m}{m} \ge 2(\sqrt{h}+h)\bigg] \le \exp(-mh) = \exp(-mt^2) \\
    & \prob\bigg[\frac{Z-m}{m} \le -4t\bigg]=\prob\bigg[\frac{Z-m}{m} \le -4\sqrt{h}\bigg] \le \prob\bigg[\frac{Z-m}{m} \le -2\sqrt{h}\bigg] \le \exp(-mh) = \exp(-mt^2) \,.
\end{align*}
\end{proof}
\noindent
For lower bound techniques, we mainly apply the Fano's inequality.
\begin{lemma}[\citet{yu1997assouad}, Lemma~3]\label{lem:fano}
For a model family $\mclass$ contains $M$ many distributions indexed by $j=1,2,\ldots,M$ such that 
\begin{align*}
\alpha &= \max_{P_j\ne P_k\in\mclass}\mathbf{KL}(P_j\| P_k) \\ 
s& =\min_{P_j\ne P_k\in\mclass}\mathbf{dist}(\theta(P_j),\theta(P_k)) \,,   
\end{align*}
where $\theta$ is a functional of its distribution argument. Then for any estimator $\widehat{\theta}$ for $\theta(P)$,
\begin{align*}
    \inf_{\widehat{\theta}}\sup_{P\in\mclass} \E_P \mathbf{dist}(\theta(P), \widehat{\theta}) \ge \frac{s}{2}\bigg(1 - \frac{\alpha + \log 2 }{\log M}\bigg) \,.   
\end{align*}
\end{lemma}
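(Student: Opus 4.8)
The plan is to prove this by the classical reduction from estimation to $M$-ary hypothesis testing, combined with Fano's inequality. First I would let $J$ be drawn uniformly from $\{1,\dots,M\}$ and generate data $X\sim P_J$. For any candidate estimator $\widehat\theta=\widehat\theta(X)$, define the induced test $\psi(X):=\argmin_{j\in[M]}\mathbf{dist}(\theta(P_j),\widehat\theta)$, with ties broken arbitrarily. The deterministic fact I would use is that whenever $\mathbf{dist}(\theta(P_J),\widehat\theta)<s/2$, the triangle inequality (here $\mathbf{dist}$ is assumed to be a pseudometric) gives, for every $k\ne J$,
\[
    \mathbf{dist}(\theta(P_k),\widehat\theta)\ge \mathbf{dist}(\theta(P_k),\theta(P_J))-\mathbf{dist}(\theta(P_J),\widehat\theta)> s-\tfrac{s}{2}=\tfrac{s}{2}>\mathbf{dist}(\theta(P_J),\widehat\theta),
\]
so that $\psi(X)=J$. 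Taking the contrapositive, $\{\psi\ne J\}\subseteq\{\mathbf{dist}(\theta(P_J),\widehat\theta)\ge s/2\}$, hence by Markov's inequality $\E_J\E_{P_J}\mathbf{dist}(\theta(P_J),\widehat\theta)\ge \tfrac{s}{2}\,\prob(\psi\ne J)$. Since the supremum over $\mclass$ is at least the average over a uniform $J$, this reduces the problem to lower-bounding the testing error $P_e:=\prob(\psi\ne J)$.

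Next I would invoke Fano's inequality along the Markov chain $J\to X\to\psi$. The standard chain of inequalities is $\log M - I(J;X)=H(J)-I(J;X)=H(J\mid X)\le H(J\mid\psi)\le h(P_e)+P_e\log(M-1)\le \log 2+P_e\log M$, where $h$ denotes the binary entropy function, the first inequality is the data-processing inequality since $\psi$ is a function of $X$, and the second is the usual form of Fano's lemma. Rearranging gives $P_e\ge 1-\frac{I(J;X)+\log 2}{\log M}$.

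It then remains to control the mutual information. Using the identity $I(J;X)=\frac1M\sum_{j=1}^M\mathbf{KL}(P_j\,\|\,\bar P)$ with $\bar P:=\frac1M\sum_{k=1}^MP_k$, together with convexity of $\mathbf{KL}(P_j\,\|\,\cdot)$, I obtain $I(J;X)\le\frac1{M^2}\sum_{j,k}\mathbf{KL}(P_j\,\|\,P_k)\le\alpha$, since the $M(M-1)$ off-diagonal terms are each at most $\alpha$ and the diagonal terms vanish. Substituting this into the Fano bound, then into the reduction inequality, yields $\sup_{P\in\mclass}\E_P\mathbf{dist}(\theta(P),\widehat\theta)\ge\frac{s}{2}\bigl(1-\frac{\alpha+\log 2}{\log M}\bigr)$, and taking the infimum over $\widehat\theta$ completes the proof.

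This argument is entirely standard, so there is no genuine obstacle; the only point requiring care is the reduction step, which implicitly uses that $\mathbf{dist}$ satisfies the triangle inequality, so that the balls of radius $s/2$ about the $\theta(P_j)$ are pairwise disjoint. In the applications of interest, where $\mathbf{dist}$ is the $0/1$ support-recovery loss $\mathbbm{1}\{\widehat\theta\ne\theta(P)\}$, this holds trivially since distinct supports are at distance $1$ and one takes $s=1$. The data-processing and Fano steps, and the convexity bound on $I(J;X)$, are the routine computations to fill in.
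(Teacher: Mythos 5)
Your proof is correct. The paper does not prove this lemma---it is quoted directly from \citet{yu1997assouad} (Lemma~3)---and your argument (reduction to $M$-ary testing via the $s/2$-ball separation and Markov's inequality, Fano's inequality through the chain $J\to X\to\psi$, and the convexity bound $I(J;X)\le\max_{j\ne k}\mathbf{KL}(P_j\|P_k)$) is exactly the standard proof given there. Your remark that the reduction step implicitly requires $\mathbf{dist}$ to satisfy the triangle inequality is well taken and is harmless here, since the paper only ever instantiates the lemma with the $0/1$ loss in Corollary~\ref{coro:fano}.
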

\noindent
Set $\theta(P_j)=j$ to be the index, $\mathbf{dist}(\cdot,\cdot) = \mathbf{1}\{\cdot \ne \cdot\}$, consider $P_j$ to be a product measure of $n$ i.i.d. samples for any $P_j\in \mclass$, then Lemma~\ref{lem:fano} under model selection context can be stated as follows:
\begin{corollary}[Fano's inequality]\label{coro:fano}
For a model family $\mclass$ contains $M$ many distributions indexed by $j=1,2,\ldots,M$ such that 
$\alpha= \max_{P_j\ne P_k\in\mclass}\mathbf{KL}(P_j\| P_k)$. 
If the sample size is bounded as
\begin{align*}
    n \le \frac{(1-2\delta)\log M}{\alpha} \,,
\end{align*}
then for any estimator $\widehat{\theta}$ for the model index:
\begin{align*}
    \inf_{\widehat{\theta}}\sup_{j\in[M]} P_j(\widehat{\theta}\ne j) \ge \delta - \frac{\log 2}{\log M} \,.
\end{align*}
\end{corollary}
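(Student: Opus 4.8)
The plan is to obtain Corollary~\ref{coro:fano} as a direct specialization of Lemma~\ref{lem:fano} to the model-selection setting, where the ``parameter'' is the model index itself and the loss is the $0$--$1$ loss, together with the tensorization of KL divergence to account for the $n$ i.i.d.\ observations.

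First I would instantiate Lemma~\ref{lem:fano} with the functional $\theta(P_j)=j$ and the pseudometric $\mathbf{dist}(a,b)=\mathbf{1}\{a\ne b\}$ on the index set $[M]$. With this choice the minimum separation is $s=\min_{j\ne k}\mathbf{1}\{j\ne k\}=1$, and for any estimator $\widehat\theta$ we have $\E_{P}\,\mathbf{dist}(\theta(P),\widehat\theta)=P(\widehat\theta\ne\theta(P))$, so the left-hand side of Lemma~\ref{lem:fano} is exactly the worst-case error probability $\inf_{\widehat\theta}\sup_{j}P_j(\widehat\theta\ne j)$.

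Next I would account for the fact that the corollary concerns $n$ i.i.d.\ samples: applying Lemma~\ref{lem:fano} to the product family $\{P_j^{\otimes n}\}_{j\in[M]}$ and using $\mathbf{KL}(P_j^{\otimes n}\,\|\,P_k^{\otimes n})=n\,\mathbf{KL}(P_j\,\|\,P_k)$, the quantity playing the role of $\alpha$ in Lemma~\ref{lem:fano} is at most $n\alpha$ with $\alpha=\max_{j\ne k}\mathbf{KL}(P_j\|P_k)$. Plugging $s=1$ and this bound in gives
\begin{align*}
    \inf_{\widehat\theta}\sup_{j\in[M]}P_j(\widehat\theta\ne j)\ \ge\ \frac{1}{2}\Big(1-\frac{n\alpha+\log 2}{\log M}\Big).
\end{align*}
Finally I would substitute the sample-size hypothesis $n\le(1-2\delta)\log M/\alpha$, i.e.\ $n\alpha\le(1-2\delta)\log M$, so $\tfrac{n\alpha}{\log M}\le 1-2\delta$, and the right-hand side is at least $\tfrac12\big(2\delta-\tfrac{\log 2}{\log M}\big)=\delta-\tfrac{\log 2}{2\log M}\ge\delta-\tfrac{\log 2}{\log M}$, which is the claimed bound.

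There is no genuine obstacle here; the result is a routine corollary of the cited Fano bound. The only points needing (minor) care are the tensorization step for the $n$ observations, the translation of the expected-loss bound in Lemma~\ref{lem:fano} into an error-probability statement via the $0$--$1$ loss, and the observation that the constant $\log 2/(2\log M)$ actually produced is slightly sharper than the stated $\log 2/\log M$, so the inequality holds with room to spare.
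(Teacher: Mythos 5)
Your proposal is correct and follows exactly the route the paper itself takes: instantiate Lemma~\ref{lem:fano} with $\theta(P_j)=j$, the $0$--$1$ loss (so $s=1$), and the $n$-fold product measures with KL tensorization, then substitute the sample-size bound. Your observation that this actually yields the slightly sharper constant $\log 2/(2\log M)$ is accurate; the paper simply states the weaker bound.
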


\bibliographystyle{abbrvnat} 
\bibliography{foo}      

\begin{thebibliography}{64}
\providecommand{\natexlab}[1]{#1}
\providecommand{\url}[1]{\texttt{#1}}
\expandafter\ifx\csname urlstyle\endcsname\relax
  \providecommand{\doi}[1]{doi: #1}\else
  \providecommand{\doi}{doi: \begingroup \urlstyle{rm}\Url}\fi

\bibitem[Aeron et~al.(2010)Aeron, Saligrama, and Zhao]{aeron2010information}
S.~Aeron, V.~Saligrama, and M.~Zhao.
\newblock Information theoretic bounds for compressed sensing.
\newblock \emph{IEEE Transactions on Information Theory}, 56\penalty0 (10):\penalty0 5111--5130, 2010.

\bibitem[Akaike(1974)]{akaike1974new}
H.~Akaike.
\newblock A new look at the statistical model identification.
\newblock \emph{IEEE transactions on automatic control}, 19\penalty0 (6):\penalty0 716--723, 1974.

\bibitem[Ak{\c{c}}akaya and Tarokh(2009)]{akccakaya2009shannon}
M.~Ak{\c{c}}akaya and V.~Tarokh.
\newblock Shannon-theoretic limits on noisy compressive sampling.
\newblock \emph{IEEE Transactions on Information Theory}, 56\penalty0 (1):\penalty0 492--504, 2009.

\bibitem[Arora and Barak(2009)]{arora2009computational}
S.~Arora and B.~Barak.
\newblock \emph{Computational complexity: a modern approach}.
\newblock Cambridge University Press, 2009.

\bibitem[Bandeira et~al.(2022)Bandeira, El~Alaoui, Hopkins, Schramm, Wein, and Zadik]{bandeira2022franz}
A.~S. Bandeira, A.~El~Alaoui, S.~Hopkins, T.~Schramm, A.~S. Wein, and I.~Zadik.
\newblock The franz-parisi criterion and computational trade-offs in high dimensional statistics.
\newblock \emph{Advances in Neural Information Processing Systems}, 35:\penalty0 33831--33844, 2022.

\bibitem[Berthet and Rigollet(2013)]{berthet2013optimal}
Q.~Berthet and P.~Rigollet.
\newblock Optimal detection of sparse principal components in high dimension.
\newblock \emph{The Annals of Statistics}, 41\penalty0 (4):\penalty0 1780, 2013.

\bibitem[Bertsimas and Van~Parys(2020)]{bertsimas2020sparse}
D.~Bertsimas and B.~Van~Parys.
\newblock Sparse high-dimensional regression.
\newblock \emph{The Annals of Statistics}, 48\penalty0 (1):\penalty0 300--323, 2020.

\bibitem[Bertsimas et~al.(2016)Bertsimas, King, and Mazumder]{bertsimas2016best}
D.~Bertsimas, A.~King, and R.~Mazumder.
\newblock Best subset selection via a modern optimization lens.
\newblock \emph{The annals of statistics}, 44\penalty0 (2):\penalty0 813--852, 2016.

\bibitem[Cai and Wang(2011)]{cai2011orthogonal}
T.~T. Cai and L.~Wang.
\newblock Orthogonal matching pursuit for sparse signal recovery with noise.
\newblock \emph{IEEE Transactions on Information theory}, 57\penalty0 (7):\penalty0 4680--4688, 2011.

\bibitem[Candes and Tao(2007)]{candes2007dantzig}
E.~Candes and T.~Tao.
\newblock The dantzig selector: Statistical estimation when p is much larger than n.
\newblock \emph{Ann. Statist.}, 35\penalty0 (1):\penalty0 2313--2351, 2007.

\bibitem[Candes and Tao(2005)]{candes2005decoding}
E.~J. Candes and T.~Tao.
\newblock Decoding by linear programming.
\newblock \emph{IEEE transactions on information theory}, 51\penalty0 (12):\penalty0 4203--4215, 2005.

\bibitem[Diakonikolas et~al.(2017)Diakonikolas, Kane, and Stewart]{diakonikolas2017statistical}
I.~Diakonikolas, D.~M. Kane, and A.~Stewart.
\newblock Statistical query lower bounds for robust estimation of high-dimensional gaussians and gaussian mixtures.
\newblock In \emph{2017 IEEE 58th Annual Symposium on Foundations of Computer Science (FOCS)}, pages 73--84. IEEE, 2017.

\bibitem[Draper(1998)]{draper1998applied}
N.~Draper.
\newblock \emph{Applied regression analysis}.
\newblock McGraw-Hill. Inc, 1998.

\bibitem[Efroymson(1960)]{efroymson1960multiple}
M.~A. Efroymson.
\newblock Multiple regression analysis.
\newblock \emph{Mathematical methods for digital computers}, pages 191--203, 1960.

\bibitem[Fan and Li(2001)]{fan2001variable}
J.~Fan and R.~Li.
\newblock Variable selection via nonconcave penalized likelihood and its oracle properties.
\newblock \emph{Journal of the American statistical Association}, 96\penalty0 (456):\penalty0 1348--1360, 2001.

\bibitem[Fan and Lv(2008)]{fan2008sure}
J.~Fan and J.~Lv.
\newblock Sure independence screening for ultrahigh dimensional feature space.
\newblock \emph{Journal of the Royal Statistical Society Series B: Statistical Methodology}, 70\penalty0 (5):\penalty0 849--911, 2008.

\bibitem[Fisher(1936{\natexlab{a}})]{fisher1936design}
R.~A. Fisher.
\newblock Design of experiments.
\newblock \emph{British Medical Journal}, 1\penalty0 (3923):\penalty0 554, 1936{\natexlab{a}}.

\bibitem[Fisher(1936{\natexlab{b}})]{fisher1936use}
R.~A. Fisher.
\newblock The use of multiple measurements in taxonomic problems.
\newblock \emph{Annals of eugenics}, 7\penalty0 (2):\penalty0 179--188, 1936{\natexlab{b}}.

\bibitem[Fisher(1970)]{fisher1970statistical}
R.~A. Fisher.
\newblock Statistical methods for research workers.
\newblock In \emph{Breakthroughs in statistics: Methodology and distribution}, pages 66--70. Springer, 1970.

\bibitem[Fletcher et~al.(2009)Fletcher, Rangan, and Goyal]{fletcher2009necessary}
A.~K. Fletcher, S.~Rangan, and V.~K. Goyal.
\newblock Necessary and sufficient conditions for sparsity pattern recovery.
\newblock \emph{IEEE Transactions on Information Theory}, 55\penalty0 (12):\penalty0 5758--5772, 2009.

\bibitem[Foster et~al.(2015)Foster, Karloff, and Thaler]{foster2015variable}
D.~Foster, H.~Karloff, and J.~Thaler.
\newblock Variable selection is hard.
\newblock In \emph{Conference on Learning Theory}, pages 696--709. PMLR, 2015.

\bibitem[Guo et~al.(2022)Guo, Lundborg, and Zhao]{guo2022confounder}
F.~R. Guo, A.~R. Lundborg, and Q.~Zhao.
\newblock Confounder selection: Objectives and approaches.
\newblock \emph{arXiv preprint arXiv:2208.13871}, 2022.

\bibitem[Guyon and Elisseeff(2003)]{guyon2003introduction}
I.~Guyon and A.~Elisseeff.
\newblock An introduction to variable and feature selection.
\newblock \emph{Journal of machine learning research}, 3\penalty0 (Mar):\penalty0 1157--1182, 2003.

\bibitem[Hazimeh and Mazumder(2020)]{hazimeh2020fast}
H.~Hazimeh and R.~Mazumder.
\newblock Fast best subset selection: Coordinate descent and local combinatorial optimization algorithms.
\newblock \emph{Operations Research}, 68\penalty0 (5):\penalty0 1517--1537, 2020.

\bibitem[Heinze et~al.(2018)Heinze, Wallisch, and Dunkler]{heinze2018variable}
G.~Heinze, C.~Wallisch, and D.~Dunkler.
\newblock Variable selection--a review and recommendations for the practicing statistician.
\newblock \emph{Biometrical journal}, 60\penalty0 (3):\penalty0 431--449, 2018.

\bibitem[Hocking and Leslie(1967)]{hocking1967selection}
R.~R. Hocking and R.~Leslie.
\newblock Selection of the best subset in regression analysis.
\newblock \emph{Technometrics}, 9\penalty0 (4):\penalty0 531--540, 1967.

\bibitem[Ji and Jin(2012)]{ji2012ups}
P.~Ji and J.~Jin.
\newblock Ups delivers optimal phase diagram in high-dimensional variable selection.
\newblock \emph{The Annals of Statistics}, pages 73--103, 2012.

\bibitem[Konishi and Kitagawa(1996)]{konishi1996generalised}
S.~Konishi and G.~Kitagawa.
\newblock Generalised information criteria in model selection.
\newblock \emph{Biometrika}, 83\penalty0 (4):\penalty0 875--890, 1996.

\bibitem[Kunisky et~al.(2019)Kunisky, Wein, and Bandeira]{kunisky2019notes}
D.~Kunisky, A.~S. Wein, and A.~S. Bandeira.
\newblock Notes on computational hardness of hypothesis testing: Predictions using the low-degree likelihood ratio.
\newblock In \emph{ISAAC Congress (International Society for Analysis, its Applications and Computation)}, pages 1--50. Springer, 2019.

\bibitem[Laurent and Massart(2000)]{laurent2000adaptive}
B.~Laurent and P.~Massart.
\newblock Adaptive estimation of a quadratic functional by model selection.
\newblock \emph{Annals of Statistics}, pages 1302--1338, 2000.

\bibitem[Loh and Wainwright(2017)]{loh2017support}
P.-L. Loh and M.~J. Wainwright.
\newblock Support recovery without incoherence: A case for nonconvex regularization.
\newblock \emph{The Annals of Statistics}, 45\penalty0 (6):\penalty0 2455--2482, 2017.

\bibitem[Ma and Wu(2015)]{ma2015computational}
Z.~Ma and Y.~Wu.
\newblock Computational barriers in minimax submatrix detection.
\newblock \emph{The Annals of Statistics}, pages 1089--1116, 2015.

\bibitem[Maathuis et~al.(2018)Maathuis, Drton, Lauritzen, and Wainwright]{maathuis2018handbook}
M.~Maathuis, M.~Drton, S.~Lauritzen, and M.~Wainwright.
\newblock \emph{Handbook of graphical models}.
\newblock CRC Press, 2018.

\bibitem[Mallows(2000)]{mallows2000some}
C.~L. Mallows.
\newblock Some comments on cp.
\newblock \emph{Technometrics}, 42\penalty0 (1):\penalty0 87--94, 2000.

\bibitem[Meinshausen and Yu(2009)]{meinshausen2009lasso}
N.~Meinshausen and B.~Yu.
\newblock Lasso-type recovery of sparse representations for high-dimensional data.
\newblock \emph{The Annals of Statistics}, 37\penalty0 (1), 2009.

\bibitem[Miller(1984)]{miller1984selection}
A.~J. Miller.
\newblock Selection of subsets of regression variables.
\newblock \emph{Journal of the Royal Statistical Society Series A: Statistics in Society}, 147\penalty0 (3):\penalty0 389--410, 1984.

\bibitem[Moitra and Wein(2023)]{moitra2023precise}
A.~Moitra and A.~S. Wein.
\newblock Precise error rates for computationally efficient testing.
\newblock \emph{arXiv preprint arXiv:2311.00289}, 2023.

\bibitem[Natarajan(1995)]{natarajan1995sparse}
B.~K. Natarajan.
\newblock Sparse approximate solutions to linear systems.
\newblock \emph{SIAM journal on computing}, 24\penalty0 (2):\penalty0 227--234, 1995.

\bibitem[Ndaoud and Tsybakov(2020)]{ndaoud2020optimal}
M.~Ndaoud and A.~B. Tsybakov.
\newblock Optimal variable selection and adaptive noisy compressed sensing.
\newblock \emph{IEEE Transactions on Information Theory}, 66\penalty0 (4):\penalty0 2517--2532, 2020.

\bibitem[Nishii(1988)]{nishii1988maximum}
R.~Nishii.
\newblock Maximum likelihood principle and model selection when the true model is unspecified.
\newblock \emph{Journal of Multivariate analysis}, 27\penalty0 (2):\penalty0 392--403, 1988.

\bibitem[Oymak et~al.(2015)Oymak, Jalali, Fazel, Eldar, and Hassibi]{oymak2015simultaneously}
S.~Oymak, A.~Jalali, M.~Fazel, Y.~C. Eldar, and B.~Hassibi.
\newblock Simultaneously structured models with application to sparse and low-rank matrices.
\newblock \emph{IEEE Transactions on Information Theory}, 61\penalty0 (5):\penalty0 2886--2908, 2015.

\bibitem[Rad(2011)]{rad2011nearly}
K.~R. Rad.
\newblock Nearly sharp sufficient conditions on exact sparsity pattern recovery.
\newblock \emph{IEEE Transactions on Information Theory}, 57\penalty0 (7):\penalty0 4672--4679, 2011.

\bibitem[Raskutti et~al.(2010)Raskutti, Wainwright, and Yu]{raskutti2010restricted}
G.~Raskutti, M.~J. Wainwright, and B.~Yu.
\newblock Restricted eigenvalue properties for correlated gaussian designs.
\newblock \emph{The Journal of Machine Learning Research}, 11:\penalty0 2241--2259, 2010.

\bibitem[Reeves et~al.(2019)Reeves, Xu, and Zadik]{reeves2019all}
G.~Reeves, J.~Xu, and I.~Zadik.
\newblock The all-or-nothing phenomenon in sparse linear regression.
\newblock In \emph{Conference on Learning Theory}, pages 2652--2663. PMLR, 2019.

\bibitem[Schwarz(1978)]{schwarz1978estimating}
G.~Schwarz.
\newblock Estimating the dimension of a model.
\newblock \emph{The annals of statistics}, pages 461--464, 1978.

\bibitem[Shao(1993)]{shao1993linear}
J.~Shao.
\newblock Linear model selection by cross-validation.
\newblock \emph{Journal of the American statistical Association}, 88\penalty0 (422):\penalty0 486--494, 1993.

\bibitem[Shao(1997)]{shao1997asymptotic}
J.~Shao.
\newblock An asymptotic theory for linear model selection.
\newblock \emph{Statistica sinica}, pages 221--242, 1997.

\bibitem[Shen et~al.(2012)Shen, Pan, and Zhu]{shen2012likelihood}
X.~Shen, W.~Pan, and Y.~Zhu.
\newblock Likelihood-based selection and sharp parameter estimation.
\newblock \emph{Journal of the American Statistical Association}, 107\penalty0 (497):\penalty0 223--232, 2012.

\bibitem[Shen et~al.(2013)Shen, Pan, Zhu, and Zhou]{shen2013constrained}
X.~Shen, W.~Pan, Y.~Zhu, and H.~Zhou.
\newblock On constrained and regularized high-dimensional regression.
\newblock \emph{Annals of the Institute of Statistical Mathematics}, 65\penalty0 (5):\penalty0 807--832, 2013.

\bibitem[Stone(1974)]{stone1974cross}
M.~Stone.
\newblock Cross-validatory choice and assessment of statistical predictions.
\newblock \emph{Journal of the royal statistical society: Series B (Methodological)}, 36\penalty0 (2):\penalty0 111--133, 1974.

\bibitem[Tibshirani(1996)]{tibshirani1996regression}
R.~Tibshirani.
\newblock Regression shrinkage and selection via the lasso.
\newblock \emph{Journal of the Royal Statistical Society Series B: Statistical Methodology}, 58\penalty0 (1):\penalty0 267--288, 1996.

\bibitem[Tropp and Gilbert(2007)]{tropp2007signal}
J.~A. Tropp and A.~C. Gilbert.
\newblock Signal recovery from random measurements via orthogonal matching pursuit.
\newblock \emph{IEEE Transactions on information theory}, 53\penalty0 (12):\penalty0 4655--4666, 2007.

\bibitem[van~de Geer et~al.(2011)van~de Geer, B{\"u}hlmann, and Zhou]{van2011adaptive}
S.~van~de Geer, P.~B{\"u}hlmann, and S.~Zhou.
\newblock The adaptive and the thresholded lasso for potentially misspecified models (and a lower bound for the lasso).
\newblock \emph{Electronic Journal of Statistics}, 5:\penalty0 688--749, 2011.

\bibitem[Wainwright(2009{\natexlab{a}})]{wainwright2009information}
M.~J. Wainwright.
\newblock Information-theoretic limits on sparsity recovery in the high-dimensional and noisy setting.
\newblock \emph{IEEE transactions on information theory}, 55\penalty0 (12):\penalty0 5728--5741, 2009{\natexlab{a}}.

\bibitem[Wainwright(2009{\natexlab{b}})]{wainwright2009sharp}
M.~J. Wainwright.
\newblock Sharp thresholds for high-dimensional and noisy sparsity recovery using $\ell_1$-constrained quadratic programming (lasso).
\newblock \emph{IEEE transactions on information theory}, 55\penalty0 (5):\penalty0 2183--2202, 2009{\natexlab{b}}.

\bibitem[Wang et~al.(2020)Wang, Weng, and Maleki]{wang2020bridge}
S.~Wang, H.~Weng, and A.~Maleki.
\newblock {Which bridge estimator is the best for variable selection?}
\newblock \emph{The Annals of Statistics}, 48\penalty0 (5):\penalty0 2791 -- 2823, 2020.
\newblock \doi{10.1214/19-AOS1906}.
\newblock URL \url{https://doi.org/10.1214/19-AOS1906}.

\bibitem[Wang et~al.(2010)Wang, Wainwright, and Ramchandran]{wang2010informationl}
W.~Wang, M.~J. Wainwright, and K.~Ramchandran.
\newblock Information-theoretic limits on sparse signal recovery: Dense versus sparse measurement matrices.
\newblock \emph{IEEE Transactions on Information Theory}, 56\penalty0 (6):\penalty0 2967--2979, 2010.

\bibitem[Wasserman and Roeder(2009)]{wasserman2009high}
L.~Wasserman and K.~Roeder.
\newblock High dimensional variable selection.
\newblock \emph{Annals of statistics}, 37\penalty0 (5A):\penalty0 2178, 2009.

\bibitem[Yu(1997)]{yu1997assouad}
B.~Yu.
\newblock Assouad, fano, and le cam.
\newblock In \emph{Festschrift for Lucien Le Cam}, pages 423--435. Springer, 1997.

\bibitem[Zhang(2010)]{zhang2010nearly}
C.-H. Zhang.
\newblock Nearly unbiased variable selection under minimax concave penalty.
\newblock \emph{The Annals of statistics}, 38\penalty0 (2):\penalty0 894--942, 2010.

\bibitem[Zhang(2011)]{zhang2011sparse}
T.~Zhang.
\newblock Sparse recovery with orthogonal matching pursuit under rip.
\newblock \emph{IEEE transactions on information theory}, 57\penalty0 (9):\penalty0 6215--6221, 2011.

\bibitem[Zhang et~al.(2014)Zhang, Wainwright, and Jordan]{zhang2014lower}
Y.~Zhang, M.~J. Wainwright, and M.~I. Jordan.
\newblock Lower bounds on the performance of polynomial-time algorithms for sparse linear regression.
\newblock In \emph{Conference on Learning Theory}, pages 921--948. PMLR, 2014.

\bibitem[Zhao and Yu(2006)]{zhao2006model}
P.~Zhao and B.~Yu.
\newblock On model selection consistency of lasso.
\newblock \emph{The Journal of Machine Learning Research}, 7:\penalty0 2541--2563, 2006.

\bibitem[Zhu et~al.(2020)Zhu, Wen, Zhu, Zhang, and Wang]{zhu2020polynomial}
J.~Zhu, C.~Wen, J.~Zhu, H.~Zhang, and X.~Wang.
\newblock A polynomial algorithm for best-subset selection problem.
\newblock \emph{Proceedings of the National Academy of Sciences}, 117\penalty0 (52):\penalty0 33117--33123, 2020.

\end{thebibliography}

\end{document}